\newtheorem{Condition}{Condition}[section]
\newcommand{\MU}{\mu}
\newcommand{\KAPPA}{\kappa}
\newcommand{\BETA}{\beta}
\newcommand{\M}{{\bf M}} 
\newcommand\bfb{{\mathbf b}}
\newcommand\bff{{\mathbf f}}
\newcommand\bfu{{\mathbf u}}
\newcommand\bfy{{\mathbf y}}
\newcommand\bfA{{\mathbf A}}
\newcommand\bfE{{\mathbf E}}
\newcommand\bfI{{\mathbf I}}
\newcommand\bfL{{\mathbf L}}
\newcommand\bfM{{\mathbf M}}
\newcommand\bfS{{\mathbf S}}
\newcommand\bfT{{\mathbf T}}
\newcommand\bfX{{\mathbf X}}
\newcommand\bfzero{{\mathbf 0}}
\newcommand\for{\quad\hbox{ for}\quad }
\renewcommand\d{\hbox{\rm d}}
\newcommand\dt{\tau}
\newcommand\calE{{\cal E}}
\newcommand\calT{{\mathcal T}}
\newcommand{\bfv}{\boldsymbol{v}}
\newcommand{\andquad}{\qquad \textrm{ and } \qquad}
\def \d {\mathrm{d}}
\newcommand{\diff}{\frac{\d}{\d t}}
\newcommand{\Ga}{\Gamma}
\newcommand{\Half}{\frac{1}{2}}
\newcommand{\laplace}{\Delta}
\newcommand{\lm}{{\rm LM}}
\newcommand{\nb}{\nabla}
\newcommand{\Om}{\Omega}
\newcommand{\pa}{\partial}
\newcommand{\R}{\mathbb{R}}
\newcommand{\resp}{respectively}
\newcommand{\st}{such that}
\def \t {(t) }
\def \to {\rightarrow}
\newcommand{\vphi}{\varphi}
\begin{document}
\title{Numerical analysis of parabolic problems \\
with dynamic boundary conditions}
\shorttitle{Parabolic problems 
with dynamic boundary conditions}
\author{Bal\'azs Kov\'acs and Christian Lubich }
\author{%
{\sc
Bal\'azs Kov\'acs\thanks{Email: koboaet@cs.elte.hu}}\\[2pt]
MTA-ELTE NumNet Research Group, P\'azm\'any P. s\'et\'any 1/C., \\
1117 Budapest, Hungary\\[6pt]
{\sc and}\\[6pt]
{\sc Christian Lubich\thanks{Email: Lubich@na.uni-tuebingen.de}}\\[2pt]
Mathematisches Institut, Universit\"at T\"ubingen, Auf der Morgenstelle,\\
72076 T\"ubingen, Germany
}
\shortauthorlist{B. Kov\'acs and Ch. Lubich}

\maketitle

\begin{abstract}
{Space and time discretisations of parabolic differential equations with dynamic boundary conditions are studied in a weak formulation that fits into the standard abstract formulation of parabolic problems, just that the 
usual $L_2(\Omega)$ inner product is replaced by an
$L_2(\Omega)\oplus L_2(\partial\Omega)$ inner product. The class of parabolic equations considered includes linear problems with time- and space-dependent coefficients and semi-linear  problems such as reaction-diffusion on a surface coupled to diffusion in the bulk. 
The spatial discretisation by finite elements  is studied in the proposed framework, with particular attention  to the error analysis of the Ritz map for the elliptic bilinear form in relation to the inner product, both of which contain boundary integrals. The error analysis is done for both polygonal and smooth domains. We further consider mass lumping, which  enables us to use exponential integrators and bulk-surface splitting for time integration.}
{dynamic boundary condition, Wentzell boundary condition, bulk-surface coupling, gradient flow, bulk and surface finite elements, Ritz projection, time discretisation, stability, a-priori error bounds}

\end{abstract}

\section{Introduction}
We are interested in the numerical solution of parabolic initial-boundary value problems with dynamic boundary conditions. Prototypes for this class of problems are the {\it heat equation with Wentzell boundary conditions}
\begin{equation} \label{heat-dynbc-wentzell}
    \begin{cases}
        \begin{alignedat}{4}
            \pa_t u &= \laplace u &\qquad & \textrm{in} \quad \Om\\
            \MU\, \pa_t u &= -\KAPPA u - \pa_{\nu} u &\qquad & \textrm{on} \quad  \Gamma,
        \end{alignedat}
    \end{cases}
\end{equation}
set on a bounded, piecewise smooth domain $\Omega\subset\R^d$ (the bulk) with boundary $\Gamma=\partial\Omega$ (referred to as the surface), with a positive coefficient $\MU$ and real $\KAPPA$ and with $\pa_{\nu} u$ denoting the normal derivative of $u$ on $\Gamma$; and {\it diffusion on the surface coupled to diffusion in the bulk},
\begin{equation} \label{heat-dynbc-heat}
    \begin{cases}
        \begin{alignedat}{4}
            \pa_t u &= \laplace u &\qquad & \textrm{in} \quad \Om\\
            \MU\, \pa_t u &= \BETA\, \Delta_\Gamma u - \pa_{\nu} u &\qquad & \textrm{on} \quad  \Gamma,
        \end{alignedat}
    \end{cases}
\end{equation}
with  positive coefficients $\MU$ and $\BETA$ and with the Laplace--Beltrami operator $\Delta_\Gamma$. We will study numerical methods for such equations as well as for inhomogeneous, non-autonomous and semi-linear  variants.

It turns out that such problems admit a weak formulation that fits into the standard abstract framework of parabolic problems. 
The difference to the heat equation with homogeneous Neumann boundary conditions is mainly that the role of the Hilbert spaces $L_2(\Omega)$ and $H^1(\Omega)$ in the Neumann problem is taken by other Hilbert spaces, for the above problems by $L_2(\Omega)\oplus L_2(\Gamma)$ and an appropriate subspace of $H^1(\Omega)$, respectively.

The papers by \cite{CPP} and \cite{cherfils2014numerical} on the finite element discretisation of the Cahn--Hilliard equation on a slab with various dynamic boundary conditions are the only publications on the numerical analysis of parabolic problems with dynamic boundary conditions that we know of.
On the other hand, there has been much recent work on analytical and modelling aspects of such problems; see, e.g., \cite{CavGGM10,coclite2009stability,ColF,engel2005analyticity,Favini2002heat,Gal2008well,Gal2008non,Goldstein2006derivation,goldstein2011cahn,Kenzler2001,Liero,Racke2003cahn,vazquez2011heat}. We are not aware that the abstract framework of the present paper is common in the literature, but related variational settings do appear in some of the references.

In Section~\ref{sect:framework} we discuss the abstract variational framework and how problems such as \eqref{heat-dynbc-wentzell} and \eqref{heat-dynbc-heat} fit in, as well as variants of these problems with space- and time-dependent coefficient functions and semi-linear  problems such as reaction-diffusion on the boundary coupled to diffusion in the bulk, the Allen--Cahn equation with dynamic boundary conditions, and the Cahn--Hilliard equation with various types of dynamic boundary conditions.

In Sections 3 and 4 we study the finite element semi-discretisation in space, which for problems \eqref{heat-dynbc-wentzell} and \eqref{heat-dynbc-heat} leads to a large system of ordinary differential equations for the nodal vector $\bfu(t)$,
$$
\bfM \dot \bfu(t) + \bfA \bfu(t)=\bfzero,
$$
where the positive definite mass matrix $\bfM$ and the positive definite or semi-definite stiffness matrix $\bfA$
differ from those of the heat equation with Neumann boundary conditions only in entries that correspond to pairs of  adjacent boundary nodes.
Much of the standard numerical analysis of the heat equation with Dirichlet or Neumann boundary conditions carries over in a direct way, since the problems with dynamic boundary conditions share the same  abstract variational framework. There are, however, a few issues where care is needed in the extension of the theory:
\begin{itemize}
\item The Ritz projection is based on a different elliptic form that contains boundary integrals. It must be put in relation with a different inner product that also contains boundary integrals.
\item Mass lumping is done for an inner product with boundary terms. It behaves differently for \eqref{heat-dynbc-wentzell} than for \eqref{heat-dynbc-heat}.
\item In the case of non-polygonal domains, the boundary approximation may play a more important role than for pure bulk problems. Its effect on the approximation needs to be studied thoroughly.
\end{itemize}
We first study a class of linear constant-coefficient problems that includes \eqref{heat-dynbc-wentzell} and \eqref{heat-dynbc-heat}, then turn to problems with space- and time-dependent coefficient functions and finally to semi-linear  problems such as the Allen--Cahn equation with dynamic boundary conditions. We discuss finite element space discretisation for the case of polygonal domains in Section 3 and then extend the results to smooth domains in Section 4.

In Section 5 we turn to time discretisation. Known stability and approximation results for standard implicit integrators such as backward difference formulae and Radau IIA implicit Runge--Kutta methods extend from Dirichlet or Neumann boundary conditions to dynamic boundary conditions without much ado, again thanks to the common abstract framework. This framework also makes it obvious how to apply exponential integrators to the problem class studied here, which is not immediately evident from the strong formulation \eqref{heat-dynbc-wentzell} or \eqref{heat-dynbc-heat}. We further consider two classes of bulk-surface splitting integrators,  force splitting and  component splitting, where in both cases differential equations corresponding to the interior domain and to the boundary are solved separately in an alternating way.

\section{Variational formulation of parabolic problems with dynamic boundary conditions}
\label{sect:framework}

\subsection{Linear problems with time-independent operators}
\label{subsec:lin-const}

\subsubsection{Abstract setting}
We recall the usual weak formulation of abstract linear parabolic problems: Given  are two Hilbert spaces $V$, with norm $\|\cdot\|$, and $H$, with norm $|\cdot|$ corresponding to the inner product $(\cdot,\cdot)$, such that $V$ is densely and continuously embedded in $H$. On $V$ we consider a continuous bilinear form $a(\cdot,\cdot)$ that satisfies a G\aa rding inequality: there exist $\alpha>0$ and real $c$ such that
$$
a(v,v) \ge \alpha \|v\|^2 - c |v|^2 \qquad \forall\, v\in V.
$$
On a time interval $0\le t \le T$, for given initial data $u_0\in H$ and an inhomogeneity $f\in L_2(0,T;H)$, the abstract parabolic initial value problem then reads: Find $u\in C([0,T],H) \cap L_2(0,T;V)$ such that (with $\dot u=\d u/\d t$)
\begin{equation}
\label{P}
        \begin{alignedat}{1}
            (\dot u(t),v) + a(u(t),v) &= (f(t),v) \qquad \forall \,v\in V \qquad (0<t\le T)\\
            u(0) &= u_0\,.
        \end{alignedat}
\end{equation}
We note that (for $f=0$) this can be viewed as the $H$-gradient flow, $(\dot u,v)=-E'(u)v$ for all $v\in V$, for the quadratic energy functional
$$
E(v) = \tfrac12 \, a(v,v), \qquad v \in V.
$$
The well-posedness of this abstract problem is well known; see, e.g., \cite{DautrayLions, Kato}. An {\it a priori} estimate of the solution is obtained by the familiar energy technique:
test with $v=u(t)$, note $(\dot u(t),u(t))=\frac12 \frac d{dt}|u(t)|^2$ and integrate from 0 to $t$ to obtain
$$
\tfrac12 |u(t)|^2 - \tfrac12 |u_0|^2+ 2\int_0^t E(u(s))\, ds = \int_0^t (f(s),u(s)) \, \d s.
$$
Using here the bound $|(f(s),u(s))| \le \| f(s) \|_* \, \| u(s) \|$ with the dual norm
$\|\varphi\|_* = \sup_{\| v \|=1} |(\varphi,v)|$ and estimating this further as $|(f(s),u(s))|\le \frac1{2\alpha} \| f(s) \|_*^2 + \frac\alpha 2 \|u(s)\|^2$ and using the G\aa rding inequality, one arrives at
$$
|u(t)|^2 +\alpha  \int_0^t \| u(s) \|^2 \, \d s \le |u_0|^2 + \frac 1\alpha\int_0^t  \| f(s) \|_*^2\, \d s + 2c\int_0^t |u(s)|^2 \, \d s,
$$
and Gronwall's inequality then yields
\begin{equation}\label{en-est}
|u(t)|^2 + \alpha  \int_0^t \| u(s) \|^2 \, \d s \le
e^{2ct} \Bigl( |u_0|^2 + \frac1\alpha \int_0^t  \| f(s) \|_*^2\, \d s \Bigr).
\end{equation}

The standard example is $V=H^1_0(\Omega)$ and $H=L_2(\Omega)$ with the Dirichlet energy functional $E(v)=\frac12 \| \nabla v\|_{L_2(\Omega)}^2$, in which case \eqref{P} yields the weak formulation of the heat equation on the domain $\Omega$ with homogeneous Dirichlet boundary conditions.

\subsubsection{Weak formulation of the heat equation with dynamic boundary conditions}
As it turns out, the weak formulation of the heat equation with dynamic boundary conditions  \eqref{heat-dynbc-wentzell} and \eqref{heat-dynbc-heat} fits into the same abstract framework with the particular choice of Hilbert space
$V=\{ v\in H^1(\Omega)\,:\, \sqrt{\BETA}\, \gamma v\in H^1(\Gamma)\}$,  where $\gamma v$ denotes the trace of $v$ on the boundary $\Gamma=\partial\Omega$, and with the bilinear form on $V$
\begin{equation}\label{a-heat-dynbc}
a(u,v) = \int_\Omega \nabla u \cdot \nabla v \, \d x + \KAPPA \int_\Gamma (\gamma u)(\gamma v)\, \d\sigma
+\BETA \int_\Gamma \nabla_\Gamma(\gamma u)\cdot\nabla_\Gamma(\gamma v)\, \d\sigma \qquad (\KAPPA\in\R,\,\BETA\ge 0),
\end{equation}
where $\nabla_\Gamma(\gamma v)=(I-\nu\nu^T)\gamma(\nabla v)$ is the tangential gradient on $\Gamma$ ($\nu$ denotes the unit normal on $\Gamma$), which is known to depend only on the trace $\gamma v$ on $\Gamma$. For brevity we will write $\nabla_\Gamma v$ instead of $\nabla_\Gamma(\gamma v)$ in the following. We have $\BETA=0$ for
\eqref{heat-dynbc-wentzell} and $\KAPPA=0$ for \eqref{heat-dynbc-heat}.

The Hilbert space $H$ is the completion of $V$ with respect to the norm induced by the inner product
\begin{equation}\label{b-heat-dynbc}
(u,v) = \int_\Omega uv\, \d x + \MU  \int_\Gamma (\gamma u)(\gamma v)\, \d\sigma \qquad (\MU> 0),
\end{equation}
so that $H$ is (isomorphic to) $L_2(\Omega)\oplus L_2(\Gamma)$.

The corresponding energy functional is then
$$
E(v) = \tfrac12 \| \nabla v \|_{L_2(\Omega)}^2 + \tfrac12 \KAPPA\,\| \gamma v \|_{L_2(\Gamma)}^2 +
\tfrac12 \BETA\,\| \nabla_\Gamma v \|_{L_2(\Gamma)}^2, \quad\hbox{and}\quad |v|^2 =  \|  v \|_{L_2(\Omega)}^2 + \MU \| \gamma v \|_{L_2(\Gamma)}^2.
$$
We thus obtain the energy estimate, here stated for the weak solution $u(t)=u(\cdot,t)$ of \eqref{P} with $f=0$ for simplicity,
\begin{eqnarray*}
&&\| u(t) \|_{L_2(\Omega)}^2 + \MU \| \gamma u(t) \|_{L_2(\Gamma)}^2
+ \int_0^t \Bigl(  \| \nabla u(s) \|_{L_2(\Omega)}^2 + \KAPPA\,\| \gamma u(s) \|_{L_2(\Gamma)}^2+ \BETA\,\| \nabla_\Gamma u(s) \|_{L_2(\Gamma)}^2\Bigr)\d s
\\
&&\hspace{9cm}\le \| u(0) \|_{L_2(\Omega)}^2 + \MU \| \gamma u(0) \|_{L_2(\Gamma)}^2.
\end{eqnarray*}
The relationship with the strong formulation of the heat equation with dynamic boundary conditions \eqref{heat-dynbc-wentzell} and \eqref{heat-dynbc-heat} is given by the following result.

\begin{lemma} Every classical solution $u\in C^2(\bar\Omega\times[0,T])$ of the heat equation with dynamic boundary conditions
\begin{equation} \label{heat-dynbc}
    \begin{cases}
        \begin{alignedat}{4}
            \pa_t u &= \laplace u &\qquad & \textrm{in} \quad \Om\\
            \MU\, \pa_t u &= -\KAPPA u + \BETA \,\Delta_\Gamma u- \pa_{\nu} u &\qquad & \textrm{on} \quad  \Gamma
        \end{alignedat}
    \end{cases}
\end{equation}
with initial data $u_0$ is a solution of the weak formulation \eqref{P} with the  bilinear forms $a(\cdot,\cdot)$ and $(\cdot,\cdot)$ of \eqref{a-heat-dynbc}--\eqref{b-heat-dynbc}. Conversely, if the solution $u$ of \eqref{P}  with  $a(\cdot,\cdot)$ and $(\cdot,\cdot)$ of \eqref{a-heat-dynbc}--\eqref{b-heat-dynbc} is sufficiently regular, then $u$ is a solution of the strong formulation \eqref{heat-dynbc}.
\end{lemma}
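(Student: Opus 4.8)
The plan is to prove the two implications by Green's formula in the bulk together with integration by parts on the closed surface $\Ga$, the weak and strong formulations being linked through the same normal flux $\pa_\nu u$. Throughout I take $f=0$, matching the homogeneous strong form \eqref{heat-dynbc}.

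For the direction from strong to weak, I would start from a classical solution $u$ and test the bulk equation $\pa_t u = \laplace u$ against an arbitrary $v\in V$, integrating over $\Om$. Green's first identity turns $\int_\Om (\laplace u)\,v\,\d x$ into $-\int_\Om \nb u\cdot\nb v\,\d x + \int_\Ga (\pa_\nu u)(\gamma v)\,\d\sigma$, which produces the Dirichlet part of $a(u,v)$ together with an unwanted boundary flux. I would then eliminate that flux using the boundary condition in the form $\pa_\nu u = -\MU\,\pa_t u -\KAPPA u + \BETA\,\Delta_\Gamma u$ on $\Ga$. The term $\MU\,\pa_t u$ supplies the surface part of the inner product in $(\dot u,v)$, the term $\KAPPA u$ gives the zeroth-order boundary contribution to $a$, and the Laplace--Beltrami term is converted by integration by parts on $\Ga$ into $\BETA\int_\Ga\nb_\Ga u\cdot\nb_\Ga v\,\d\sigma$. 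Collecting the pieces reassembles exactly $(\dot u,v)+a(u,v)=0$.

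For the converse, assuming $u$ is regular enough that $\laplace u$, $\pa_\nu u$ and $\Delta_\Gamma u$ are well defined, I would run the same identities in reverse to rewrite the weak equation as $\int_\Om(\dot u-\laplace u)\,v\,\d x + \int_\Ga(\MU\,\pa_t u + \pa_\nu u + \KAPPA u - \BETA\,\Delta_\Gamma u)(\gamma v)\,\d\sigma = 0$ for all $v\in V$. I would first restrict to test functions $v\in C_c^\infty(\Om)$, whose trace vanishes, to conclude by the fundamental lemma of the calculus of variations that $\dot u=\laplace u$ in $\Om$; the volume integral then drops out for all $v\in V$, and since the traces $\gamma v$ are dense in $L_2(\Ga)$, the remaining surface integral forces $\MU\,\pa_t u + \pa_\nu u + \KAPPA u - \BETA\,\Delta_\Gamma u = 0$ on $\Ga$, which is precisely the stated dynamic boundary condition.

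The only genuinely delicate point is the surface integration by parts: because $\Ga=\pa\Om$ is a closed manifold without boundary, the Laplace--Beltrami term produces no boundary contribution, and this is exactly what lets the $\BETA$-term of $a(\cdot,\cdot)$ correspond cleanly to $\Delta_\Gamma u$ in the strong form. Everything else is routine once sufficient regularity is assumed to make all integrals and the two localization steps in the converse direction legitimate.
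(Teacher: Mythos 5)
Your argument is correct and is exactly the route the paper indicates (and then omits): Green's formula in the bulk, integration by parts on the closed boundary manifold for the Laplace--Beltrami term, and the fundamental lemma of variational calculus for the converse, with the normal flux $\pa_\nu u$ acting as the link between the two formulations. No discrepancies to report.
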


The proof is almost identical to the proof of the corresponding result for the heat equation with Neumann boundary conditions. It is again based on Green's formula in the domain and on the boundary, and on the fundamental lemma of variational calculus. The proof is therefore omitted.

\subsection{Linear problems with time-varying operators}

\subsubsection{Abstract setting}
Consider again Hilbert spaces $V$ and $H$ such that $V$ is continuously and densely embedded in $H$. On $V$ we consider uniformly equivalent time-dependent inner product norms $\|\cdot\|_t$, and on $H$ a time-dependent family of inner products $m(t;\cdot,\cdot):H\times H\to\R$ that induce uniformly equivalent norms $|w|_t^2 = m(t;w,w)$ for $w\in H$. We assume  a bounded partial derivative of $m$ with respect to $t$:
\begin{equation}\label{m-dot-bound}
\Bigl| \frac {\partial m}{\partial t} (t;u,v) \Bigr|\le M_0'\, |u|_t\, |v|_t \qquad
\forall\, u,v\in V \qquad (0\le t \le T).
\end{equation}

%
%
On $V$ we consider a time-dependent family of  bilinear forms $a(t;\cdot,\cdot):V\times V\to \R$ ($0\le t \le T$) that satisfy a uniform G\aa rding inequality
$$
a(t;v,v) \ge \alpha \|v\|_t^2 - c |v|_t^2 \qquad \forall\, v\in V \qquad (0\le t \le T)
$$
and are uniformly bounded,
$$
|a(t;u,v)| \le M_1  \| u \|_t \, \| v \|_t \qquad  \forall\, u,v\in V \qquad (0\le t \le T).
$$
We further assume
\begin{equation}\label{a-dot-bound}
\Bigl| \frac {\partial a}{\partial t} (t;u,v) \Bigr|\le M_1'\, \|u\|_t\, \|v\|_t \qquad
\forall\, u,v\in V \qquad (0\le t \le T).
\end{equation}
The non-autonomous version of the abstract parabolic initial value problem is then to find $u\in C([0,T],H)\cap L_2(0,T;V)$ such that
\begin{equation}
\label{P-t}
        \begin{alignedat}{1}
            m(t;\dot u(t),v) + a(t;u(t),v) &= m(t;f(t),v) \qquad \forall \,v\in V \qquad (0<t\le T)\\
            u(0) &= u_0\,.
        \end{alignedat}
\end{equation}
Here again we obtain an {\it a priori} bound using the energy technique: by the same arguments as in the time-invariant situation and using the above bounds we obtain
\begin{equation}\label{en-est-t}
|u(t)|_t^2 + \alpha \int_0^t \| u(s) \|_s^2 \, \d s \le e^{2 (c+M_0') t}
\Bigl( |u_0|_0^2 + \frac1\alpha \int_0^t  \| f(s) \|_{*,s}^2\, \d s \Bigr),
\end{equation}
with the time-dependent dual norm $\| \varphi \|_{*,t}= \sup_{\| v \|_t=1} |m(t; \varphi, v)|$.

\subsubsection{Heat equation with non-autonomous dynamic boundary conditions}

We consider the weak formulation of the problem
\begin{equation} \label{heat-dynbc-t}
    \begin{cases}\ \
        \begin{alignedat}{4}
            \pa_t u &= \laplace u &\qquad & \textrm{in} \quad \Om\\
            \MU(x,t) \pa_t u(x,t) &= -\KAPPA(x,t) u(x,t) +\nabla_\Gamma \cdot \BETA(x,t) \nabla_\Gamma u(x,t) - \pa_{\nu} u(x,t) &\qquad & \textrm{on} \quad  \Gamma=\pa\Om
        \end{alignedat}
    \end{cases}
\end{equation}
with real-valued coefficient functions $\MU,\KAPPA,\BETA$ on $\Gamma\times[0,T]$ such that $\MU,\KAPPA,\BETA:[0,T]\to L_\infty(\R)$ are continuously differentiable, $\MU$ has a strictly positive lower bound, and $\BETA$ either  has a strictly positive lower bound or vanishes identically.
This fits into the above framework for
$V=\{v \in H^1(\Omega)\,:\, \sqrt{\beta} \,\nabla_\Gamma(\gamma v) \in L_2(\Gamma)\}$
with
\begin{equation}\label{a-heat-dynbc-t}
a(t;u,v) = \int_\Omega \nabla u \cdot \nabla v \, \d x +  \int_\Gamma \KAPPA(\cdot,t) \,(\gamma u)(\gamma v)\, \d\sigma +
\int_\Gamma \BETA(\cdot,t) \,\nabla_\Gamma u\cdot\nabla_\Gamma v\, \d\sigma
\end{equation}
and  $H=L_2(\Omega)\oplus L_2(\Gamma)$ with
\begin{equation}\label{b-heat-dynbc-t}
m(t;u,v) = \int_\Omega uv\, \d x +   \int_\Gamma \MU(\cdot,t) \,(\gamma u)(\gamma v)\, \d\sigma.
\end{equation}
The framework applies equally when $\MU(\cdot,t)$ is piecewise continuous on $\Gamma$ and  has a positive lower bound on a subset $\Gamma_+\subset \Gamma$ and is zero on the complementary part $\Gamma_0=\Gamma\setminus\Gamma_+$. In this case,
$H=L_2(\Omega)\oplus L_2(\Gamma_+)$. Similarly, $\BETA$ may be allowed to have a positive lower bound on a time-independent subset of $\Gamma$ and to vanish on the complement.

\subsection{Some nonlinear examples}\label{subsec:nonlinear-examples}

We present three examples of nonlinear parabolic equations with dynamic boundary conditions. They can all be cast in the following abstract form of a semi-linear  parabolic problem on suitable spaces $V$ and $H$: Find $u\in C([0,T],H) \cap L_2(0,T;V)$ such that (with $\dot u=du/dt$)
\begin{equation}
\label{P-nl}
        \begin{alignedat}{1}
            (\dot u(t),v) + a(u(t),v) &= (f(u(t)),v) \qquad \forall \,v\in V \qquad (0<t\le T)\\
            u(0) &= u_0\,,
        \end{alignedat}
\end{equation}
where $f:V\to H$ is a sufficiently regular nonlinearity. In our examples the bilinear form $a(\cdot,\cdot)$ on $V$ and the inner product $(\cdot,\cdot)$ on $H$ contain boundary terms. The bilinear form $a(\cdot,\cdot)$ is of the form \eqref{a-heat-dynbc} with $\BETA>0$. In the first two examples the inner product $(\cdot,\cdot)$ is of the form \eqref{b-heat-dynbc} on $H=L_2(\Omega)\oplus L_2(\Gamma)$, and in the third example on $H=H^{-1}(\Omega)\oplus L_2(\Gamma)$ or $H=H^{-1}(\Omega)\oplus H^{-1}(\Gamma)$.

\subsubsection{Reaction-diffusion on a surface coupled to diffusion in the bulk}
We consider a reaction-diffusion equation on the boundary
$$
\MU \,\partial_t \psi = \BETA \,\Delta_\Gamma \psi + f(\psi) \quad\hbox{ on $\Gamma$},
$$
with $\MU,\BETA>0$ 
and a nonlinear pointwise reaction term
$f(\phi)(x)=f(\phi(x))$ (for a smooth and bounded function $f:\R\to\R$, say),
and  we further consider diffusion in the bulk,
$$
\partial_t u = \Delta u \quad\hbox{ in $\Omega$}.
$$
We couple these equations subject to the constraint $\psi=\gamma u$. We obtain the following weak formulation: find
$(u,\psi):[0,T] \to H^1(\Omega)\times H^1(\Gamma)$ subject to $\psi=\gamma u$ such that
for all
$ (v,\phi)\in H^1(\Omega)\times H^1(\Gamma)$ with $ \phi=\gamma v$,
$$
 (\dot u, v)_{L_2(\Omega)}+ \MU (\dot\psi, \phi)_{L_2(\Gamma)}= - (\nabla u, \nabla v)_{L_2(\Omega)}
 - \BETA(\nabla_\Gamma \psi, \nabla_\Gamma \phi)_{L_2(\Gamma)}
 + (f(\psi),\phi)_{L_2(\Gamma)}.
$$
Equivalently, with the bilinear forms
\begin{equation}\label{ab-rd}
\begin{array}{rcl}
 (u,v) &=& (u,v)_{L_2(\Omega)} + \MU (\gamma u,\gamma v)_{L_2(\Gamma)}
 \\[1mm]
a(u,v) &=& (\nabla u, \nabla v)_{L_2(\Omega)}
+ \BETA (\nabla_\Gamma  u, \nabla_\Gamma  v)_{L_2(\Gamma)}
\end{array}
\end{equation}
on the Hilbert spaces $H=L_2(\Omega)\oplus L_2(\Gamma)$ and
$V=\{ v \in H^1(\Omega)\,:\, \gamma v \in H^1(\Gamma)\}$, respectively,  we have
$$
(\dot u, v) + a(u,v) = (f(\gamma u),\gamma v)_{L_2(\Gamma)} \qquad\forall\, v\in V.
$$
This is to be solved for $u\in C([0,T],H)\cap L_2(0,T;V)$ for given initial data $u_0\in H$.
The corresponding strong formulation is
\begin{equation} \label{rd-dynbc}
\begin{array}{rcl}
 \partial_t u &=& \Delta u \qquad\qquad\qquad\qquad\ \hbox{ in $\Omega$}
 \\[1mm]
 \MU \,\partial_t u &=& \BETA\, \Delta_\Gamma u + f(u) - \partial_\nu u \quad\hbox{ on $\Gamma$},
\end{array}
\end{equation}
where the normal derivative $\partial_\nu u$ figures as the Lagrange multiplier corresponding to
the constraint $\psi=\gamma u$.

\subsubsection{Allen--Cahn equation with dynamic boundary conditions}
The following problem is studied in \cite{Gal2008non,Liero,ColF} and further references therein.
Given potentials $W, W_\Gamma:\R\to\R$ such as a double-well potential
$W(u)=(u^2-1)^2$, the Allen--Cahn equation
$$
 \partial_t u = \Delta u -W'(u) \quad\ \hbox{ in $\Omega$}
$$
is considered with a  dynamic boundary condition like in \eqref{rd-dynbc},
$$
\MU \,\partial_t u = \BETA\, \Delta_\Gamma u -W_\Gamma'(u) - \partial_\nu u \quad\hbox{ on $\Gamma$}.
$$
In \cite{Liero} this dynamic boundary condition is derived as a scaling limit in a vanishing boundary layer approximation. This problem fits into the framework \eqref{P-nl} with $(\cdot,\cdot)$ and $a(\cdot,\cdot)$ as in the previous example.   For the energy functional
\begin{equation}\label{EAC}
E(v) = \tfrac12 \| \nabla v \|_{L_2(\Omega)}^2 +
\tfrac12 \BETA\,\| \nabla_\Gamma v \|_{L_2(\Gamma)}^2 +W(v) + W_\Gamma(\gamma v), \qquad v\in V,
\end{equation}
where $V=\{ v \in H^1(\Omega)\,:\, \gamma v \in H^1(\Gamma)\}$ as before, the weak formulation can be viewed as the $L_2(\Omega)\oplus L_2(\Gamma)$ gradient flow  with respect to the weighted inner product \eqref{b-heat-dynbc}:
$$
(\partial_t u,v) = - E'(u)v \qquad \forall \, v\in V.
$$

\subsubsection{Cahn--Hilliard equation with dynamic boundary conditions}
Given potentials $W, W_\Gamma:\R\to\R$, the Cahn--Hilliard equation
$$
\begin{array}{rcl}
\partial_t u &=& \Delta w
\\[1mm]
w &=& \Delta u -W'(u)
\end{array}
\quad\ \hbox{ in $\Omega$}
$$
is considered with the boundary conditions
$$
\begin{array}{rcl}
\MU \,\partial_t u &=& \BETA \, \Delta_\Gamma u  - W_\Gamma'(u) - \partial_\nu u
\\[1mm]
\partial_\nu w &=& 0
\end{array}
\quad\hbox{ on $\Gamma$}
$$
in \cite{CPP,Kenzler2001,Gal2008well,Racke2003cahn} as a  prototype model for the influence of the boundaries on the process of phase separation.

The corresponding weak formulation is the $H^{-1}(\Omega)\oplus L_2(\Gamma)$-gradient flow of the energy functional
\eqref{EAC},  with the inner product
$$
(u,v) = (u,v)_{H^{-1}(\Omega)}+ \MU\, (u,v)_{L_2(\Gamma)},
$$
where $(u,v)_{H^{-1}(\Omega)}=(u, (-\Delta)^{-1}v)_{L_2(\Omega)}$ with $\Delta^{-1}v$ denoting the solution with zero mean of the Poisson equation with homogeneous Neumann boundary conditions for the inhomogeneity $v$ on $\Omega$. This weak formulation is again of the form \eqref{P-nl} with $a(\cdot,\cdot)$ as in \eqref{ab-rd}.

To our knowledge, this problem is the only parabolic problem with  dynamic boundary conditions for which a numerical analysis has been carried out:  in \cite{CPP}  the Elliott--French finite element space discretisation and the implicit Euler time discretisation are studied.
%

The above model can be viewed as coupling Cahn--Hilliard in the bulk and Allen--Cahn on the surface. The Cahn--Hilliard / Cahn--Hilliard coupling corresponding to $H=H^{-1}(\Omega)\oplus H^{-1}(\Gamma)$ is equally of interest; cf.~\cite{goldstein2011cahn}.

\section{Spatial semi-discretisation on polygonal domains}
\label{sect:spacediscretisation}

We study the finite element semi-discretisation in space of parabolic equations with dynamic boundary conditions in the framework of the previous section, first for linear problems with constant coefficients, then for linear problems with space- and time-dependent coefficients, and finally for semi-linear  problems. Mass lumping is also studied. In this section we assume that the domain $\Omega$ is polygonal, so that no boundary approximation is required. 

\subsection{Spatial semi-discretisation of linear problems with constant coefficients}

\subsubsection{Galerkin semi-discretisation}

For a finite dimensional  approximation space $V_h\subset V$, the Galerkin semi-discretisation of the abstract parabolic problem \eqref{P}  determines $u_h:[0,T]\to V_h$ such that
\begin{equation}
\label{heat-dynbc-semi-discrete}
    \begin{alignedat}{1}
        (\dot u_h(t),v_h) + a(u_h(t),v_h) &= (f(t),v_h) \qquad \forall \,v_h\in V_h \qquad (0<t\le T)\\
        (u_h(0),v_h) &= (u_{0} ,v_h) \qquad\ \ \, \forall \,v_h\in V_h .
    \end{alignedat}
\end{equation}
Representing $u_h(t)$ with respect to a basis $\vphi_1,\vphi_2,\dotsc,\vphi_N$ of $V_h$ as
$$
u_h(t) = \sum_{i=1}^N u_i(t)\vphi_i,
$$
the time-dependent coefficient vector $\bfu(t)=(u_1(t),u_2(t), \dotsc, u_N(t))^T$
then satisfies  the system of linear ordinary differential equations
\begin{equation*}
    \bfM\dot\bfu(t) + \bfA \bfu(t) = \bfb(t),
\end{equation*}
with the symmetric positive definite mass matrix $\bfM$ and the symmetric stiffness matrix $\bfA$ having the entries
\begin{equation}
\label{eq-mass-stiffness-matrix-def}
    \begin{gathered}
        m_{ij}=(\vphi_j,\vphi_i), \qquad 
        a_{ij}= a(\vphi_j,\vphi_i),  \\ 
    \end{gathered}
    \qquad (i,j=1,2,\dotsc,N),
\end{equation}
and with the load vector $\bfb(t)$ with entries
$$
b_i(t) = (f(t),\vphi_i)  \qquad (i=1,2,\dotsc,N).
$$
There is the semi-discrete energy estimate of the same type as \eqref{en-est}, obtained by the same proof,
\begin{equation}\label{en-est-h}
|u_h(t)|^2 + \alpha  \int_0^t \| u_h(s) \|^2 \, \d s \le
e^{2ct} \Bigl( |u_h(0)|^2 + \frac1\alpha \int_0^t  \| f(s) \|_*^2\, \d s \Bigr).
\end{equation}
%
%
%

\subsubsection{Linear finite elements on polygonal domains: first-order error bounds}
\label{subsection-FEM}
We discuss in detail the case of linear finite elements for the heat equation with dynamic boundary conditions \eqref{heat-dynbc}, the extension to higher-order finite elements being straightforward. Using the Ritz projection corresponding to the bilinear form $a(\cdot,\cdot)$ given by \eqref{a-heat-dynbc} we show optimal-order error bounds for the spatially discrete solution. For the ease of presentation we assume that $\KAPPA>0$ in \eqref{a-heat-dynbc} throughout this section, so that $a(\cdot,\cdot)$ is an inner product on the closed subspace $V$ of $H^1(\Omega)$. We then consider the norm $\| v \|^2 = a(v,v)$ on $V$.
The inner product $(\cdot,\cdot)$ on $H=L_2(\Omega)\oplus L_2(\Gamma)$  is given by \eqref{b-heat-dynbc}. It induces the norm $| v |^2 = (v,v)$ on $H$. 

%
%

\medskip
We consider a family of quasi-uniform triangulations of the domain $\Om$ parametrised by the maximal meshwidth $h$. The corresponding  finite element space $V_h\subset V$ is spanned by continuous, piecewise linear nodal basis functions $\vphi_1,\vphi_2,\dotsc,\vphi_N$  that are continuous on $\Omega$ and linear on each finite element and, for each node $x_k$,  satisfy $\vphi_j(x_k) = \delta_{jk}$. We note here that the restrictions of the basis functions to the boundary form a basis over the boundary elements.

%
%
%

\medskip
The basic tool for proving error bounds is the Ritz projection $R_h:V\to V_h$ with respect to the elliptic bilinear form $a(\cdot,\cdot)$ of \eqref{a-heat-dynbc}, which is the $a$-orthogonal projection defined by
\begin{equation}\label{ritz}
a(R_h u, v_h) = a(u,v_h) \qquad \forall v_h \in V_h.
\end{equation}

\newcommand{\ur}{u-R_h u}
\newcommand{\ui}{u-I_h u}
\begin{lemma}
\label{lemma-Ritz-error}
    The error of the Ritz projection for the elliptic bilinear form \eqref{a-heat-dynbc} satisfies a  first-order bound in the energy norm,
    \begin{align*}
        \|\nb(\ur)\|_{L_2(\Om)}^2 + \BETA \|\nb_\Ga (\ur)\|_{L_2(\Ga)}^2 + &\ \KAPPA \|\gamma(\ur)\|_{L_2(\Ga)}^2\\
        \leq &\  C h^2 \Big( \|u\|_{H^2(\Om)}^2 + \BETA\|\gamma u\|_{H^2(\Ga)}^2 \Big),
    \end{align*}
    where the constant $C$ is independent of $h$  and $u\in H^2(\Omega)$ with $\sqrt{\BETA}\gamma u\in H^2(\Gamma)$.
\end{lemma}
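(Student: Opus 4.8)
The plan is to combine Céa's lemma (best approximation in the energy norm) with interpolation error estimates, treating the bulk, surface-gradient and surface-trace contributions separately.

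Since $\KAPPA>0$, the form $a(\cdot,\cdot)$ is an inner product on $V$ and $\|v\|^2=a(v,v)$ is the associated energy norm; by definition \eqref{ritz} the Ritz projection $R_h u$ is the $a$-orthogonal projection of $u$ onto $V_h$, hence the best approximation of $u$ in this norm. Consequently
$$
\|\ur\|^2 \le \|u-v_h\|^2 \qquad \forall\, v_h\in V_h,
$$
and the left-hand side of the asserted inequality is exactly $\|\ur\|^2$. It therefore suffices to exhibit one convenient $v_h\in V_h$ for which $\|u-v_h\|^2$ is bounded by the right-hand side, and I would take $v_h=I_h u$, the continuous piecewise linear nodal interpolant (well defined since $u\in H^2(\Om)\hookrightarrow C(\bar\Om)$ for $\dimOm\le 3$).

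It remains to bound the three pieces of $\|\ui\|^2 = \|\nb(\ui)\|_{L_2(\Om)}^2 + \BETA\|\nb_\Ga(\ui)\|_{L_2(\Ga)}^2 + \KAPPA\|\gamma(\ui)\|_{L_2(\Ga)}^2$. For the bulk gradient term the standard linear interpolation estimate gives $\|\nb(\ui)\|_{L_2(\Om)}\le Ch\|u\|_{H^2(\Om)}$. The surface terms rest on the key observation that nodal interpolation commutes with the trace: because the boundary nodes of the bulk triangulation are precisely the nodes of the induced surface mesh and the restriction of a continuous piecewise linear bulk function to a boundary face is continuous and piecewise linear on that mesh, one has $\gamma(I_h u)=I_h^\Ga(\gamma u)$, with $I_h^\Ga$ the nodal interpolation on $\Ga$. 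When $\BETA>0$ the hypothesis $\gamma u\in H^2(\Ga)$ then lets me invoke the surface interpolation estimate $\|\nb_\Ga(\gamma u-I_h^\Ga\gamma u)\|_{L_2(\Ga)}\le Ch\|\gamma u\|_{H^2(\Ga)}$, which controls the $\BETA$-term by $Ch^2\BETA\|\gamma u\|_{H^2(\Ga)}^2$.

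For the trace term carrying $\KAPPA$ I would avoid any appeal to $H^2(\Ga)$-regularity of $\gamma u$ (unavailable when $\BETA=0$) and instead use the scaled elementwise trace inequality $\|w\|_{L_2(F)}^2\le C\bigl(h_K^{-1}\|w\|_{L_2(K)}^2 + h_K\|\nb w\|_{L_2(K)}^2\bigr)$ applied to $w=\ui$ on each boundary face $F$ of its element $K$. Summing over boundary elements, using quasi-uniformity to replace $h_K$ by $h$, and inserting the bulk estimates $\|\ui\|_{L_2(K)}\le Ch_K^2\|u\|_{H^2(K)}$ and $\|\nb(\ui)\|_{L_2(K)}\le Ch_K\|u\|_{H^2(K)}$ yields $\|\gamma(\ui)\|_{L_2(\Ga)}^2\le Ch^3\|u\|_{H^2(\Om)}^2$, which is of higher order than needed. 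Assembling the three bounds gives $\|\ui\|^2\le Ch^2\bigl(\|u\|_{H^2(\Om)}^2 + \BETA\|\gamma u\|_{H^2(\Ga)}^2\bigr)$, and the lemma follows from the best-approximation property. The step needing most care is this trace term: in the Wentzell case $\BETA=0$ the trace of an $H^2(\Om)$-function lies only in $H^{3/2}(\Ga)$, so a naive surface $H^2$-interpolation estimate is unjustified, and the scaled trace inequality is what transfers everything back to bulk estimates; the commutation identity $\gamma\circ I_h=I_h^\Ga\circ\gamma$, though intuitive, is the other point that must be stated carefully, since it is what makes the surface estimates applicable at all.
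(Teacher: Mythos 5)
Your proof is correct and follows essentially the same route as the paper: the best-approximation property of the Ritz projection in the energy norm $\|v\|^2=a(v,v)$, followed by interpolation estimates for $I_h u$ in the bulk and on the boundary. The only deviation is the $\KAPPA$-trace term, which the paper handles more simply by the global trace inequality $\KAPPA\|\gamma(u-I_hu)\|_{L_2(\Ga)}^2 \le \KAPPA c_{\Om}^2\|u-I_hu\|_{H^1(\Om)}^2$ combined with the bulk $H^1$ interpolation bound (giving $O(h^2)$), whereas your elementwise scaled trace inequality yields the sharper $O(h^3)$; both suffice, and both correctly avoid any appeal to $H^2(\Ga)$ regularity of $\gamma u$ in the Wentzell case $\BETA=0$.
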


\begin{proof}
With $\|v\|^2=a(v,v)$ we have for the error of the Ritz projection
$$
\| u-R_hu \| = \min_{v_h\in V_h} \| u-v_h \| \le \| u -I_hu \|,
$$
where $I_h$ denotes the piecewise linear finite element interpolation operator.
Using the standard interpolation estimates in the polygonal domain $\Omega$ and on its boundary $\Gamma$, we obtain
\begin{equation}\label{intpol-err}
\begin{array} {rl} \| u -I_hu \|^2 &=\ \|\nb(\ui)\|_{L_2(\Om)}^2 + \BETA \|\nb_\Ga (\ui)\|_{L_2(\Ga)}^2 + \KAPPA \|\gamma(\ui)\|_{L_2(\Ga)}^2 \\
    &\leq\ \|\nb(\ui)\|_{L_2(\Om)}^2 + \BETA \|\nb_\Ga(\ui)\|_{L_2(\Ga)}^2 + \KAPPA c_{\Om}^2 \|\ui\|_{H^1(\Om)}^2 \\[1mm]
   & \leq\ (1+\KAPPA c_{\Om}^2) Ch^2 \|u\|_{H^2(\Om)}^2 + \BETA C h^2 \| u\|_{H^2(\Ga)}^2 ,
\end{array}
\end{equation}
which yields the stated result.
\end{proof}

We note that the order in $h$ increases from 1 to $m$  if finite elements of degree $m$ are used.

\medskip
Using the above result for the Ritz map we prove a first-order error estimate in the natural norms for the spatial semi-discretisation of \eqref{heat-dynbc}.
\begin{theorem}
\label{theorem-spatial-error-est-linear}
   If the solution of the parabolic problem with dynamic boundary conditions \eqref{heat-dynbc} is sufficiently regular, then the error of the semi-discretisation \eqref{heat-dynbc-semi-discrete} satisfies the first-order bound
    \begin{equation}
    \label{eq-semi-discr-error-bound}
        \begin{alignedat}{1}
            &\|u_h\t - u\t\|_{L_2(\Om)}^2 + \MU \| \gamma(u_h\t - u\t)\|_{L_2(\Ga)}^2  \\
            & + \int_0^t \!\! \bigg(\big\|\nb\big(u_h(s) - u(s)\big)\big\|_{L_2(\Om)}^2 + \BETA \big\|\nb_\Ga\big(u_h(s) - u(s)\big)\big\|_{L_2(\Ga)}^2 + \KAPPA\|\gamma(u_h(s) - u(s))\|_{L_2(\Ga)}^2 \bigg) \d s \leq C h^2,
        \end{alignedat}
    \end{equation}
    for $0<t\leq T$, where the constant $C$ is independent of $h$,
    but depends on $T$.
\end{theorem}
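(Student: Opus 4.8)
The plan is to follow the classical Thomée--Wheeler error analysis for parabolic finite elements, which carries over essentially verbatim once the Ritz estimate of Lemma~\ref{lemma-Ritz-error} and the semi-discrete energy estimate \eqref{en-est-h} are in hand; the abstract structure is unchanged and only the inner products $(\cdot,\cdot)$ and $a(\cdot,\cdot)$ now carry boundary integrals. First I would split the error by means of the Ritz projection $R_h$ as
$$
u_h - u = (u_h - R_h u) + (R_h u - u) = \theta + \rho, \qquad \theta := u_h - R_h u \in V_h, \quad \rho := R_h u - u .
$$
The term $\rho$ is controlled directly by Lemma~\ref{lemma-Ritz-error}: in the energy norm $\int_0^t\|\rho(s)\|^2\,\d s = O(h^2)$, and since $V$ is continuously embedded in $H$ we also get $|\rho(t)| \le C\|\rho(t)\| = O(h)$ without any duality argument. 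This last point is exactly why only a first-order $L_2$ bound (rather than the optimal second order) is asserted, which keeps the proof elementary.

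The heart of the matter is the estimate for $\theta$. Subtracting the weak equation \eqref{P} tested against $v_h\in V_h\subset V$ (with $f=0$, as in \eqref{heat-dynbc}) from the semi-discrete equation \eqref{heat-dynbc-semi-discrete}, and using the defining orthogonality $a(\rho,v_h)=0$ of the Ritz projection, I would obtain
$$
(\dot\theta, v_h) + a(\theta,v_h) = -(\dot\rho, v_h) \qquad \forall\, v_h\in V_h .
$$
This is precisely the semi-discrete problem \eqref{heat-dynbc-semi-discrete} for $\theta$ with data $-\dot\rho\in H$, so the energy estimate \eqref{en-est-h} applies with $f$ replaced by $-\dot\rho$, giving
$$
|\theta(t)|^2 + \alpha\int_0^t\|\theta(s)\|^2\,\d s \le e^{2ct}\Bigl(|\theta(0)|^2 + \tfrac1\alpha\int_0^t\|\dot\rho(s)\|_*^2\,\d s\Bigr).
$$
Here I would use that $R_h$ is time-independent, so that $\dot\rho = R_h\dot u - \dot u$ is the Ritz error of $\dot u$; bounding $\|\dot\rho\|_* \le C|\dot\rho| \le C\|\dot\rho\|$ and invoking Lemma~\ref{lemma-Ritz-error} for $\dot u$ yields $\int_0^t\|\dot\rho(s)\|_*^2\,\d s = O(h^2)$. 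For the initial term, $u_h(0)$ is the $(\cdot,\cdot)$-orthogonal projection of $u_0$ onto $V_h$, so $|\theta(0)| \le |u_h(0)-u_0| + |u_0 - R_h u_0|$, and both pieces are $O(h)$ (best $H$-approximation, respectively the energy estimate for the Ritz map combined with the $V\hookrightarrow H$ embedding). Combining this with the bound on $\rho$ via the triangle inequality delivers both the pointwise term $|u_h(t)-u(t)|^2$ and the time-integrated energy term appearing in \eqref{eq-semi-discr-error-bound}.

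The steps are all routine; the only points requiring care, and where the boundary terms genuinely enter, are \emph{(i)} that the energy norm $\|v\|^2 = a(v,v)$ is equivalent to the $H^1(\Omega)$-based norm on $V$ --- when $\BETA=0$ this rests on a Poincar\'e inequality with a boundary $L_2$-term rather than a bulk term --- and \emph{(ii)} that the continuous embedding $|v|\le C\|v\|$ holds with $|\cdot|$ and $\|\cdot\|$ both containing surface integrals. Neither is a serious obstacle, but both must be invoked so that \eqref{en-est-h} may be used with $c=0$ (coercivity, valid since $\KAPPA>0$ here) and so that the dual-norm bound $\|\dot\rho\|_*\le C\|\dot\rho\|$ is legitimate. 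The hypothesis that the solution be ``sufficiently regular'' is used exactly to guarantee that $u$ and $\dot u$ lie in $H^2(\Omega)$ with $\sqrt{\BETA}\,\gamma u$ and $\sqrt{\BETA}\,\gamma\dot u$ in $H^2(\Gamma)$ uniformly on $[0,T]$, so that the right-hand sides in Lemma~\ref{lemma-Ritz-error} are bounded and the $h$-independent constant $C$ absorbs the $T$-dependence coming from the time integrations.
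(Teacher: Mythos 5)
Your proposal is correct and follows essentially the same route as the paper's proof: the same splitting via the Ritz projection, the same error equation for $\theta=u_h-R_hu$ with right-hand side $\dot u - R_h\dot u$, the same application of the semi-discrete energy estimate \eqref{en-est-h} with $c=0$, the same treatment of $\theta(0)$ via the $H$-orthogonal projection, and the same use of $\|\cdot\|_*\le C|\cdot|\le C\|\cdot\|$ together with Lemma~\ref{lemma-Ritz-error} applied to $u$ and $\dot u$. No gaps.
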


\medskip
\begin{proof} Using the error bound of the Ritz projection given in Lemma~\ref{lemma-Ritz-error}, the proof uses the standard argument based on the energy estimate; cf.~\cite{Thomee}. We include the short proof for the convenience of the reader and as a later reference point.
We use the decomposition
\begin{equation*}
    u_h - u = (u_h - R_h u) + (R_h u - u),
\end{equation*}
where the  second term can be estimated using Lemma \ref{lemma-Ritz-error}.
%
Let us denote the first term in the error as $e_h\t:=u_h\t-R_h u\t$. Using the definition of the Ritz projection gives us the error equation
\begin{align*}
    (\dot e_h\t,v_h) + a(e_h\t,v_h) =&\ (\dot u\t - R_h \dot u\t,v_h) \qquad\forall\, v_h\in V_h,
\end{align*}
and the energy estimate \eqref{en-est-h} (with $\alpha=1$ and $c=0$) yields
\begin{equation}\label{eh-est}
|e_h(t)|^2 + \int_0^t \| e_h(s) \|^2 \,\d s \le |e_h(0)|^2 + \int_0^t \|\dot u(s) - R_h \dot u(s)\|_*^2 \,\d s.
\end{equation}
Since $u_h(0)=P_h u(0)$ is the $H$-orthogonal projection of $u(0)$, we have
$$
|e_h(0)| \le |P_h u(0) - u(0)| + |u(0)-R_h u(0)| \le 2 |u(0)-R_h u(0)|.
$$
Moreover, both the dual norm $\|\cdot\|_*$ and the $H$-norm $|\cdot|$ are weaker than the $V$-norm $\|\cdot\|$:
$C^{-1}\|v\|_* \le  |v|\le C \|v\|$ for all $v\in V$.
By Lemma \ref{lemma-Ritz-error}, both  terms on the right-hand side of the energy estimate are thus $O(h^2)$ if the solution $u$ is sufficiently regular.
\end{proof}

\begin{remark}
    The Ritz projection can be defined for the general case $\KAPPA\in\R$. By introducing the positive definite form $a^*(\cdot,\cdot) := a(\cdot,\cdot) + c\; (\cdot,\cdot)$ ($c$ is from the G\aa rding inequality), and defining the Ritz projection with respect to this modified form,
    \begin{equation*}
        a^*(R_hu,v_h) = a^*(u,v_h) \qquad \forall v_h\in V_h,
    \end{equation*}
error estimates for the Ritz map and the semi-discrete error bounds can be shown as above.
\end{remark}

\subsubsection{Second-order error bound in the case of Wentzell boundary conditions}\label{subsubsec:second-wentzell}

We consider the spatial semi-discretisation of problem \eqref{heat-dynbc} with $\BETA=0$ on a polygonal domain using linear finite elements. We assume again $\kappa>0$ for ease of presentation. We will show second-order error bounds in the $L_2(\Omega)\oplus H^{-1/2}(\Gamma)$ norm, using an unusual variant of the Aubin-Nitsche duality argument. We need the following $H^2$-regularity condition.

\begin{Condition}
\label{condition-regularity-0}
    There exists a constant $C_2<\infty$ such that for every  $g_\Om\in L_2(\Om)$ and $g_\Ga\in H^{1/2}(\Gamma)$, the weak solution $\phi$ of the Poisson equation with Robin boundary condition
    \begin{equation}
    \label{eq-elliptic-problem}
        \begin{alignedat}{3}
            -\laplace \phi =&\ g_\Om \qquad &\ &\ \textrm{in } \Om \\
            \pa_\nu \phi  + \KAPPA \phi = &\ g_\Ga \qquad &\ &\ \textrm{on } \Ga
        \end{alignedat}
    \end{equation}
   is in $H^2(\Omega)$ and is bounded by
   \begin{equation}
    \label{eq-regurality-estimate}
        \|\phi\|_{H^2(\Om)}^2  \leq C_2 \Big( \|g_\Om\|_{L_2(\Om)}^2 + \|g_\Ga\|_{H^{1/2}(\Ga)}^2 \Big).
    \end{equation}
 \end{Condition}
\begin{remark} Condition \ref{condition-regularity-0} is known to be satisfied in the case of a smooth domain $\Omega$; see \cite{Taylor}.  We would expect that it also holds for convex polygonal domains, but we are not aware of a reference for such a result.
\end{remark}



\bigskip
The following estimate then holds for the error of the Ritz projection.
\begin{lemma}
\label{lemma-Ritz-error-0}
 If Condition \ref{condition-regularity-0} is satisfied, then the error of the Ritz projection \eqref{ritz} corresponding to the bilinear form \eqref{a-heat-dynbc}  with $\BETA=0$ satisfies the  second-order  bound
   $$     \|\ur\|_{L_2(\Om)}^2 + \MU \|\gamma(\ur)\|_{H^{-1/2}(\Ga)}^2
         \leq C h^4 \, \|u\|_{H^2(\Om)}^2  ,
   $$
    where the constant $C$ is independent of $h$  and $u\in H^2(\Omega)$.
\end{lemma}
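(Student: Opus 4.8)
The plan is to run an Aubin--Nitsche duality argument, the unusual feature being that the boundary part of the error is measured in $H^{-1/2}(\Ga)$, which is dual to the $H^{1/2}(\Ga)$ boundary data appearing in Condition~\ref{condition-regularity-0}. Writing $e=\ur$, I would first represent the target norm by duality: a weighted Cauchy--Schwarz inequality shows that
\[
\|e\|_{L_2(\Om)}^2 + \MU\|\gamma e\|_{H^{-1/2}(\Ga)}^2
= \sup \frac{\big( (e,g_\Om)_{L_2(\Om)} + \MU\,\langle \gamma e, g_\Ga\rangle_\Ga \big)^2}{\|g_\Om\|_{L_2(\Om)}^2 + \MU\|g_\Ga\|_{H^{1/2}(\Ga)}^2},
\]
where the supremum runs over all nonzero pairs $(g_\Om,g_\Ga)\in L_2(\Om)\times H^{1/2}(\Ga)$ and $\langle\cdot,\cdot\rangle_\Ga$ is the $H^{-1/2}(\Ga)$--$H^{1/2}(\Ga)$ duality pairing. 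It therefore suffices to bound the numerator for a fixed such pair.

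To this end I would solve the dual problem from Condition~\ref{condition-regularity-0} with data $g_\Om$ in the bulk and $\MU g_\Ga$ on the boundary, i.e.\ $-\laplace\phi=g_\Om$ in $\Om$ and $\pa_\nu\phi+\KAPPA\phi=\MU g_\Ga$ on $\Ga$. Green's formula together with the Robin condition shows that its weak form is exactly $a(\phi,v) = (g_\Om,v)_{L_2(\Om)} + \MU\,\langle g_\Ga,\gamma v\rangle_\Ga$ for all $v\in V$, with $a(\cdot,\cdot)$ the form \eqref{a-heat-dynbc} for $\BETA=0$. Testing with $v=e$ yields the key identity $a(\phi,e) = (e,g_\Om)_{L_2(\Om)} + \MU\,\langle \gamma e, g_\Ga\rangle_\Ga$, so the numerator equals $a(\phi,e)$.

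Now I would exploit the symmetry of $a(\cdot,\cdot)$ and Galerkin orthogonality $a(e,v_h)=0$ for all $v_h\in V_h$ (the defining property \eqref{ritz} of the Ritz projection): for the piecewise linear interpolant $I_h\phi\in V_h$,
\[
a(\phi,e)=a(e,\phi)=a(e,\phi-I_h\phi)\le \|e\|\,\|\phi-I_h\phi\|,
\]
where $\|v\|^2=a(v,v)$. The first factor is $O(h\|u\|_{H^2(\Om)})$ by Lemma~\ref{lemma-Ritz-error} with $\BETA=0$, and the second factor is $O(h\|\phi\|_{H^2(\Om)})$ by the interpolation estimates of \eqref{intpol-err}. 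Finally the regularity bound \eqref{eq-regurality-estimate} gives $\|\phi\|_{H^2(\Om)}^2\le C_2(\|g_\Om\|_{L_2(\Om)}^2+\MU^2\|g_\Ga\|_{H^{1/2}(\Ga)}^2)$, which is controlled by the denominator in the duality formula. Dividing by that denominator and taking the supremum produces an $O(h^2)$ bound on the norm, i.e.\ the claimed $O(h^4)$ bound on its square.

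I expect the main obstacle to be setting up the duality pairing so that it matches Condition~\ref{condition-regularity-0} precisely: one must recognise that the natural weak form of the Robin problem produces exactly the $\MU$-weighted $L_2(\Om)\oplus L_2(\Ga)$ pairing of the inner product \eqref{b-heat-dynbc}, and that choosing the boundary data as $\MU g_\Ga$ with $g_\Ga\in H^{1/2}(\Ga)$ is what couples the $H^{1/2}$-regularity of the data to the $H^{-1/2}$-norm of the boundary error. A secondary point needing care is that, since $\BETA=0$, the energy norm of the interpolation error $\|\phi-I_h\phi\|$ contains only an $L_2(\Ga)$ trace term; its $O(h)$ bound from $\phi\in H^2(\Om)$ alone follows from the trace inequality bounding $\|\gamma(\phi-I_h\phi)\|_{L_2(\Ga)}$ by $\|\phi-I_h\phi\|_{H^1(\Om)}$, exactly as in the proof of Lemma~\ref{lemma-Ritz-error}.
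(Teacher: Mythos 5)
Your proposal is correct and follows essentially the same route as the paper: an Aubin--Nitsche duality argument built on the Robin problem of Condition~\ref{condition-regularity-0}, with the boundary datum engineered so that the weak form of the dual problem tested with $e=\ur$ reproduces the $\MU$-weighted $L_2(\Om)\oplus H^{-1/2}(\Ga)$ pairing, followed by Galerkin orthogonality, the first-order energy estimates for both factors, and the $H^2$-regularity bound. The only cosmetic difference is that you phrase the target norm as a supremum over dual data $(g_\Om,g_\Ga)$, whereas the paper directly inserts the maximising choice $g_\Ga=(-\Delta_\Gamma+I)^{-1/2}\gamma e$ (and uses $R_h\phi$ rather than $I_h\phi$ in the orthogonality step), which changes nothing of substance.
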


\begin{proof}
 In the spirit of the Aubin--Nitsche duality argument we consider the elliptic problem, for $g=u-R_hu$,
 $$
   \begin{alignedat}{3}
            -\laplace \phi =&\ g \qquad &\ &\ \textrm{in } \Om \\
            \pa_\nu \phi  + \KAPPA \phi = &\ \mu(-\Delta_\Gamma + I)^{-1/2}\gamma g \qquad &\ &\ \textrm{on } \Ga,
        \end{alignedat}
$$
which has the weak formulation
$$
a(\phi,v) = (g,v)_{L_2(\Omega)} + \mu (\gamma g, \gamma v)_{H^{-1/2}(\Gamma)}
\qquad \forall v \in H^1(\Omega),
$$
where
$$
(\gamma g, \gamma v)_{H^{-1/2}(\Gamma)}  =
\bigl( (-\Delta_\Gamma + I)^{-1/2}\gamma g , \gamma v \bigr)_{L_2(\Gamma)}.
$$
Choosing $v=g=u-R_hu$ we obtain
$$
  \|g\|_{L_2(\Om)}^2 + \MU \|\gamma g\|_{H^{-1/2}(\Ga)}^2 = a(\phi,g) = a(\phi-R_h\phi,g),
$$
using the Galerkin orthogonality \eqref{ritz} in the last equality. This expression is further estimated, using subsequently the Cauchy--Schwarz inequality,
Lemma~\ref{lemma-Ritz-error}, Condition~\ref{condition-regularity-0}, and the relation between the $H^{1/2}(\Gamma)$ and $H^{-1/2}(\Gamma)$ norms:
\begin{align*}
a(\phi-R_h\phi,g) & = a(\phi-R_h\phi,u-R_hu)
\\
&\le \|\phi-R_h\phi\| \cdot \| u-R_hu \| \\[1mm]
&\le Ch \| \phi\|_{H^2(\Omega)} \cdot Ch \|u \|_{H^2(\Omega)} \\
&\le C' h^2 \Bigl(   \|g\|_{L_2(\Om)}^2 + \MU \|(-\Delta_\Gamma + I)^{-1/2}\gamma g\|_{H^{1/2}(\Ga)}^2  \Bigr)^{1/2}  \|u \|_{H^2(\Omega)} \\
&= C' h^2 \Bigl(   \|g\|_{L_2(\Om)}^2 + \MU \|\gamma g\|_{H^{-1/2}(\Ga)}^2  \Bigr)^{1/2}  \|u \|_{H^2(\Omega)},
\end{align*}
and dividing through yields the stated result.
\end{proof}

Lemma~\ref{lemma-Ritz-error-0} yields the following error bound for the spatial semi-discretisation of \eqref{heat-dynbc} with $\BETA=0$.
\begin{theorem}
\label{theorem-spatial-error-est-linear-0}
   If the solution of the parabolic problem with dynamic boundary conditions \eqref{heat-dynbc} with $\BETA=0$ is sufficiently regular and if Condition \ref{condition-regularity-0} is satisfied, then the error of the semi-discretisation \eqref{heat-dynbc-semi-discrete} with starting value $u_h(0)=R_h u(0)$ satisfies the second-order error bound
    \begin{equation}
    \label{eq-semi-discr-error-bound-linear-0}
        \begin{alignedat}{1}
            &\|u_h\t - u\t\|_{L_2(\Om)}^2 + \MU \| \gamma(u_h\t - u\t)\|_{H^{-1/2}(\Ga)}^2   \leq C h^4
        \end{alignedat}
    \end{equation}
    for $0\le t\leq T$, where the constant $C$ is independent of $h$.
    \end{theorem}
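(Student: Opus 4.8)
The plan is to follow the Thomée-type argument already used for Theorem~\ref{theorem-spatial-error-est-linear}, but to measure the whole error in the weaker norm $L_2(\Om)\oplus H^{-1/2}(\Ga)$, so that the second-order Ritz bound of Lemma~\ref{lemma-Ritz-error-0} can be exploited twice. First I would split the error as $u_h-u=e_h+(R_hu-u)$ with $e_h:=u_h-R_hu$. The second summand is controlled directly: since $R_hu-u=-(\ur)$, Lemma~\ref{lemma-Ritz-error-0} gives $\|R_hu-u\|_{L_2(\Om)}^2+\MU\|\gamma(R_hu-u)\|_{H^{-1/2}(\Ga)}^2=\|\ur\|_{L_2(\Om)}^2+\MU\|\gamma(\ur)\|_{H^{-1/2}(\Ga)}^2\le Ch^4\|u\|_{H^2(\Om)}^2$. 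It then only remains to estimate $e_h$ in the same norm.

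For $e_h$ I would reuse the error equation $(\dot e_h,v_h)+a(e_h,v_h)=(\dot u-R_h\dot u,v_h)$ together with the semi-discrete energy estimate \eqref{eh-est}. Because the starting value is chosen as $u_h(0)=R_hu(0)$ we have $e_h(0)=0$, and since $\KAPPA>0$ the form $a(\cdot,\cdot)$ is coercive with $\alpha=1$, $c=0$; hence
$$
|e_h(t)|^2 \le \int_0^t \|\dot u(s)-R_h\dot u(s)\|_*^2\,\d s,
$$
where $|\cdot|$ is the $H=L_2(\Om)\oplus L_2(\Ga)$ norm and $\|\cdot\|_*$ the dual norm with respect to $(\cdot,\cdot)$ and the energy norm $\|\cdot\|$. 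The entire gain of a second power of $h$ must come from a sharp estimate of this dual norm, which is exactly where the first-order theorem is too wasteful.

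The key step, and the place I expect to be the main obstacle, is to show $\|\dot u-R_h\dot u\|_*\le Ch^2$. Writing $\rho:=\dot u-R_h\dot u$ and testing against $v\in V$ with $\|v\|=1$, I would split the $H$-inner product into its bulk and boundary parts, $(\rho,v)=(\rho,v)_{L_2(\Om)}+\MU(\gamma\rho,\gamma v)_{L_2(\Ga)}$. The bulk part is bounded by $\|\rho\|_{L_2(\Om)}\|v\|_{L_2(\Om)}\le Ch^2\|v\|$, using $\|v\|_{L_2(\Om)}\le|v|\le C\|v\|$ and Lemma~\ref{lemma-Ritz-error-0} applied to $\dot u$. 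For the boundary part the crude $L_2(\Ga)$ Cauchy--Schwarz bound would lose a power of $h$; instead I would use the $H^{-1/2}(\Ga)$--$H^{1/2}(\Ga)$ duality, $(\gamma\rho,\gamma v)_{L_2(\Ga)}\le\|\gamma\rho\|_{H^{-1/2}(\Ga)}\|\gamma v\|_{H^{1/2}(\Ga)}$, and then combine the trace inequality $\|\gamma v\|_{H^{1/2}(\Ga)}\le C\|v\|_{H^1(\Om)}\le C\|v\|$ (the last step by the Poincaré-type equivalence of $\|\cdot\|$ with the $H^1(\Om)$ norm when $\KAPPA>0$) with the $H^{-1/2}$ bound $\|\gamma\rho\|_{H^{-1/2}(\Ga)}\le Ch^2$ furnished again by Lemma~\ref{lemma-Ritz-error-0}. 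Both contributions are $O(h^2)\|v\|$, so $\|\rho\|_*\le Ch^2$ and therefore $|e_h(t)|^2\le CTh^4$.

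Finally, since $H^{-1/2}(\Ga)$ is weaker than $L_2(\Ga)$, the $H$-norm bound on $e_h$ immediately yields $\|e_h(t)\|_{L_2(\Om)}^2+\MU\|\gamma e_h(t)\|_{H^{-1/2}(\Ga)}^2\le C|e_h(t)|^2\le CTh^4$. Combining this with the bound on $R_hu-u$ through the triangle inequality gives the asserted $O(h^4)$ estimate. The only regularity needed beyond that of Theorem~\ref{theorem-spatial-error-est-linear} is $\dot u(t)\in H^2(\Om)$, which is precisely what lets Lemma~\ref{lemma-Ritz-error-0} be applied to $\dot u$ and is covered by the \emph{sufficiently regular} hypothesis.
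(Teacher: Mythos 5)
Your proposal is correct and follows essentially the same route as the paper's own proof: the decomposition $u_h-u=e_h+(R_hu-u)$ with $e_h(0)=0$, the energy estimate \eqref{eh-est}, the sharpened dual-norm bound $\|w\|_*\le C\bigl(\|w\|_{L_2(\Om)}^2+\mu\|\gamma w\|_{H^{-1/2}(\Ga)}^2\bigr)^{1/2}$ obtained via the $H^{-1/2}(\Ga)$--$H^{1/2}(\Ga)$ duality and the trace inequality, Lemma~\ref{lemma-Ritz-error-0} applied to $\dot u$, and the observation that the $H$-norm dominates the target $L_2(\Om)\oplus H^{-1/2}(\Ga)$ norm. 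No gaps.
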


\medskip
\begin{proof} We return to the proof of Theorem~\ref{theorem-spatial-error-est-linear} and bound $R_hu(t)- u(t)$ using Lemma~\ref{lemma-Ritz-error-0}. In the energy bound of the error \eqref{eh-est} we note that $e_h(0)=u_h(0)-R_hu(0)=0$ by assumption, and
$$
\|e_h\t \|_{L_2(\Om)}^2 + \MU \| \gamma e_h\t \|_{H^{-1/2}(\Ga)}^2 \le
\|e_h\t \|_{L_2(\Om)}^2 + \MU \| \gamma e_h\t \|_{L_2(\Ga)}^2 = |e_h\t|^2.
$$
Since
\begin{align*}
(w,v) &= (w,v)_{L_2(\Om)} + \mu (\gamma w, \gamma v)_{L_2(\Ga)}
\\
&\le \| w \|_{L_2(\Om)} \, \| w \|_{L_2(\Om)} +
\mu \|\gamma w\|_{H^{-1/2}(\Ga)}\,  \|\gamma v\|_{H^{1/2}(\Ga)}
\\
&\le C \Bigl( \| w \|_{L_2(\Om)} ^2 + \mu \|\gamma w\|_{H^{-1/2}(\Ga)}^2 \Bigr)^{1/2}
\| v \|_{H^1(\Om)},
\end{align*}
we obtain
$$
\| w \|_* \le C \Bigl( \| w \|_{L_2(\Om)} ^2 + \mu \|\gamma w\|_{H^{-1/2}(\Ga)}^2 \Bigr)^{1/2},
$$
which for $w=\dot u(s) - R_h\dot u(s)$, for $0\le s \le t \le T$, is bounded by $O(h^2)$ by Lemma~\ref{lemma-Ritz-error-0}. Hence \eqref{eh-est} together with the above estimates yields the result.
\end{proof}

\subsubsection{Second-order error bound in the case of bulk-surface diffusion coupling} \label{subsubsec:second-bsdc}
In the case of $\BETA>0$ in \eqref{heat-dynbc} we use the following $H^2$-regularity condition.
   \begin{Condition}\label{condition-regularity-beta}
       There exists a constant $C_2<\infty$ such that for every  $g_\Om\in L_2(\Om)$ and $g_\Ga\in H^{1/2}(\Gamma)$, the weak solution $\phi$ of the elliptic problem, with $\BETA>0$ and $\KAPPA>0$,
    \begin{equation}
    \label{eq-elliptic-problem-beta}
        \begin{alignedat}{3}
            -\laplace \phi =&\ g_\Om \qquad &\ &\ \textrm{in } \Om \\
            \pa_\nu \phi  - \BETA \Delta_\Ga\phi + \KAPPA \phi = &\ g_\Ga \qquad &\ &\ \textrm{on } \Ga,
        \end{alignedat}
    \end{equation}
   is in $H^2(\Omega)$, has trace in $H^2(\Gamma)$ and is
   bounded by
    \begin{equation}
    \label{eq-regurality-estimate-beta}
        \|\phi\|_{H^2(\Om)}^2 +  \|\gamma \phi\|_{H^2(\Ga)}^2 \leq C_2 \Big( \|g_\Om\|_{L_2(\Om)}^2 + \|g_\Ga\|_{L_2(\Ga)}^2 \Big).
    \end{equation}
\end{Condition}
\begin{remark}
It can be shown that this condition is satisfied for smooth domains $\Om$, adapting the standard $H^2$-regularity proof for the heat equation with Dirichlet or Neumann boundary conditions as given, e.g., in \cite[Section 9.6]{Brezis}. We are not aware of a suitable reference for (convex) polygonal domains.
\end{remark}

\begin{lemma}
\label{lemma-Ritz-error-beta}
 If Condition \ref{condition-regularity-beta} is satisfied, then the error of the Ritz projection \eqref{ritz} corresponding to the bilinear form \eqref{a-heat-dynbc}  with $\BETA>0$ satisfies the  second-order  bound
   $$     \|\ur\|_{L_2(\Om)}^2 + \MU \|\gamma(\ur)\|_{L_2(\Ga)}^2
         \leq C h^4 \, \bigl( \|u\|_{H^2(\Om)}^2  + \|u\|_{H^2(\Ga)}^2),
   $$
    where the constant $C$ is independent of $h$  and $u\in H^2(\Omega)$ with $\gamma u \in H^2(\Ga)$.
\end{lemma}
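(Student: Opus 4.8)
The plan is to run the Aubin--Nitsche duality argument exactly as in the proof of Lemma~\ref{lemma-Ritz-error-0}, but now against the \emph{natural} $H$-inner product $(\cdot,\cdot)$ on $L_2(\Om)\oplus L_2(\Ga)$ rather than the weighted $H^{-1/2}(\Ga)$ pairing used in the Wentzell case; the point is that with $\BETA>0$ the regularity Condition~\ref{condition-regularity-beta} already delivers an $H^2$-bound in terms of plain $L_2$ data, so no fractional-order trick is needed. I would set $g=\ur$ and consider the dual elliptic problem $a(\phi,v)=(g,v)$ for all $v\in V$, where $(g,v)=(g,v)_{L_2(\Om)}+\MU(\gamma g,\gamma v)_{L_2(\Ga)}$. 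Integrating by parts in the bulk and on the surface identifies its strong form as precisely \eqref{eq-elliptic-problem-beta} with $g_\Om=g\in L_2(\Om)$ and $g_\Ga=\MU\,\gamma g\in H^{1/2}(\Ga)$, so Condition~\ref{condition-regularity-beta} applies and yields $\|\phi\|_{H^2(\Om)}^2+\|\gamma\phi\|_{H^2(\Ga)}^2\le C\,(\|g\|_{L_2(\Om)}^2+\MU\|\gamma g\|_{L_2(\Ga)}^2)=C\,|g|^2$.

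The core estimate then follows the familiar three steps. First, testing the dual problem with $v=g$ and using symmetry of $a$ together with the Galerkin orthogonality \eqref{ritz} (namely $a(\ur,R_h\phi)=0$) gives the identity $|g|^2=a(\phi,g)=a(\phi-R_h\phi,\ur)$. Second, Cauchy--Schwarz in the energy inner product $\|\cdot\|^2=a(\cdot,\cdot)$ bounds this by $\|\phi-R_h\phi\|\,\|\ur\|$. Third, I apply the first-order energy bound of Lemma~\ref{lemma-Ritz-error} to each factor: applied to $\ur$ it gives $Ch\,(\|u\|_{H^2(\Om)}^2+\|u\|_{H^2(\Ga)}^2)^{1/2}$, while applied to $\phi$ it gives $Ch\,(\|\phi\|_{H^2(\Om)}^2+\|\gamma\phi\|_{H^2(\Ga)}^2)^{1/2}$, which by the regularity bound above is $\le C'h\,|g|$. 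Collecting factors yields $|g|^2\le C h^2\,|g|\,(\|u\|_{H^2(\Om)}^2+\|u\|_{H^2(\Ga)}^2)^{1/2}$, and dividing through by $|g|$ and squaring produces the claimed $h^4$ bound.

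The step that requires the most care---and the real reason Condition~\ref{condition-regularity-beta} is stated with the surface term $\|\gamma\phi\|_{H^2(\Ga)}$ included---is the third one: because $\BETA>0$, Lemma~\ref{lemma-Ritz-error} controls $\|\phi-R_h\phi\|$ only through \emph{both} $\|\phi\|_{H^2(\Om)}$ and $\|\gamma\phi\|_{H^2(\Ga)}$, so the duality argument closes only if the dual solution has $H^2$-regular trace on $\Ga$. This trace regularity is exactly what distinguishes Condition~\ref{condition-regularity-beta} from Condition~\ref{condition-regularity-0}, and it is what makes the $L_2(\Ga)$-norm (rather than an $H^{-1/2}(\Ga)$-norm) the right boundary measure for the error here. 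The remaining points---well-definedness of $R_h\phi$, positive definiteness and symmetry of $a(\cdot,\cdot)$ under $\KAPPA,\BETA>0$, and the harmless absorption of the constants $\MU,\KAPPA,\BETA$---are routine.
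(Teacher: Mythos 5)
Your proof is correct and is precisely the ``standard Aubin--Nitsche duality argument'' that the paper invokes and omits: the paper takes $g_\Om=u-R_hu$ and $g_\Ga=\gamma(u-R_hu)$ in \eqref{eq-elliptic-problem-beta} (you take $g_\Ga=\MU\,\gamma(u-R_hu)$, an inessential rescaling), uses Condition~\ref{condition-regularity-beta} for the $H^2$-regularity of $\phi$ and of its trace, and applies Lemma~\ref{lemma-Ritz-error} to both factors after Galerkin orthogonality and Cauchy--Schwarz. Your remark on why the trace regularity $\|\gamma\phi\|_{H^2(\Ga)}$ is indispensable here is exactly the right observation; no gaps.
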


\begin{proof} The proof uses the standard Aubin--Nitsche duality argument with $g_\Om=\ur$ and $g_\Ga=\gamma(\ur)$ in
 \eqref{eq-elliptic-problem-beta}; cf.~the proof of Lemma~\ref{lemma-Ritz-error-0}. We omit the details.
\end{proof}

Together with the proof of Theorem~\ref{theorem-spatial-error-est-linear},   Lemma~\ref{lemma-Ritz-error-beta} yields the following error bound for the spatial semi-discretisation of \eqref{heat-dynbc} with $\BETA>0$.
\begin{theorem}
\label{theorem-spatial-error-est-linear-beta}
   If the solution of the parabolic problem with dynamic boundary conditions \eqref{heat-dynbc} with $\BETA>0$ is sufficiently regular and if Condition \ref{condition-regularity-beta} is satisfied, then the error of the semi-discretisation \eqref{heat-dynbc-semi-discrete} satisfies the second-order error bound
    \begin{equation}
    \label{eq-semi-discr-error-bound-linear-beta}
        \begin{alignedat}{1}
            &\|u_h\t - u\t\|_{L_2(\Om)}^2 + \MU \| \gamma(u_h\t - u\t)\|_{L_2(\Ga)}^2   \leq C h^4
        \end{alignedat}
    \end{equation}
    for $0\le t\leq T$, where the constant $C$ is independent of $h$.
    \end{theorem}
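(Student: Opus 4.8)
The plan is to follow the proof of Theorem~\ref{theorem-spatial-error-est-linear} almost verbatim, replacing the first-order Ritz estimate of Lemma~\ref{lemma-Ritz-error} by the second-order estimate of Lemma~\ref{lemma-Ritz-error-beta}. As there, I would split the error as $u_h-u=e_h+(R_hu-u)$ with $e_h:=u_h-R_hu$, estimate the Ritz part $R_hu-u$ directly, and control the discrete part $e_h$ through the energy estimate \eqref{eh-est}.

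The second term is immediate: by Lemma~\ref{lemma-Ritz-error-beta} the quantity $\|R_hu\t-u\t\|_{L_2(\Om)}^2+\MU\|\gamma(R_hu\t-u\t)\|_{L_2(\Ga)}^2$ is already $O(h^4)$, which is precisely the left-hand side of the claimed bound. It therefore remains to show $|e_h\t|^2=O(h^4)$, where $|\cdot|$ is the $H$-norm given by $|v|^2=\|v\|_{L_2(\Om)}^2+\MU\|\gamma v\|_{L_2(\Ga)}^2$.

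For $e_h$ I would apply \eqref{eh-est}. With the starting value $u_h(0)=P_hu(0)$ from \eqref{heat-dynbc-semi-discrete}, the same triangle-inequality argument as in Theorem~\ref{theorem-spatial-error-est-linear} gives $|e_h(0)|\le 2\,|u(0)-R_hu(0)|$, which Lemma~\ref{lemma-Ritz-error-beta} now bounds by $O(h^2)$. For the integral term the key step is that the dual norm is dominated by the $H$-norm: since $V$ is continuously embedded in $H$, Cauchy--Schwarz in $H$ gives $\|w\|_*=\sup_{\|v\|=1}|(w,v)|\le C\,|w|$. Applying this with $w=\dot u(s)-R_h\dot u(s)$ and invoking Lemma~\ref{lemma-Ritz-error-beta} for $\dot u(s)$ yields $\|\dot u(s)-R_h\dot u(s)\|_*=O(h^2)$, so the integral in \eqref{eh-est} is $O(h^4)$. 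Combining the two contributions gives $|e_h\t|^2=O(h^4)$, and together with the Ritz part this proves \eqref{eq-semi-discr-error-bound-linear-beta}.

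What I want to emphasise is that the whole gain from first to second order is carried by Lemma~\ref{lemma-Ritz-error-beta}: the inequality $\|w\|_*\le C|w|$ is the same as in the first-order proof, but there it was chained with $|w|\le C\|w\|$ and the first-order energy-norm bound, producing $O(h)$ factors, whereas here it is paired with the genuinely second-order $H$-norm bound, upgrading every $O(h)$ to $O(h^2)$. Unlike the Wentzell case of Theorem~\ref{theorem-spatial-error-est-linear-0}, no $H^{-1/2}(\Ga)$ duality enters, because for $\BETA>0$ the Ritz error is already controlled in the full $H$-norm $L_2(\Om)\oplus L_2(\Ga)$. Consequently the present argument is routine once Lemma~\ref{lemma-Ritz-error-beta} is granted; the only obstacle worth flagging is that the hypothesis ``$u$ sufficiently regular'' must be read to include $\dot u\in H^2(\Om)$ with $\gamma\dot u\in H^2(\Ga)$, so that Lemma~\ref{lemma-Ritz-error-beta} may legitimately be applied to the time derivative.
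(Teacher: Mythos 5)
Your proposal is correct and follows essentially the same route as the paper, which proves this theorem simply by re-running the proof of Theorem~\ref{theorem-spatial-error-est-linear} with Lemma~\ref{lemma-Ritz-error-beta} in place of Lemma~\ref{lemma-Ritz-error}; your filling-in of the details (the bound $\|w\|_*\le C|w|$, the $O(h^2)$ control of $|e_h(0)|$ via the $H$-orthogonal projection, and the regularity requirement on $\dot u$) is accurate and consistent with the paper's argument.
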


\subsubsection{Mass lumping} \label{subsubsec:mass-lumping}

In many situations it is convenient to replace the mass matrix $\bfM$ of \eqref{eq-mass-stiffness-matrix-def} by a diagonal matrix. This can be achieved by replacing the $L_2(\Omega)\oplus L_2(\Gamma)$ inner product $(\cdot,\cdot)$ of \eqref{b-heat-dynbc} by  a suitable quadrature.

We follow
the framework of  lumped-mass methods as presented in \cite[Section 15]{Thomee}.
We use the analogue  of the trapezoidal rule in $d$ and $d-1$ dimensions:
\begin{equation*}
    Q_E(f) = {\rm vol}_d(E) \,\frac{1}{d+1} \sum_{j=1}^{d+1} f(x_j) \andquad  Q_e(f) = {\rm vol}_{d-1}(e)\, \frac1d\sum_{j=1}^{d}  f(x_j),
\end{equation*}
where ${\rm vol}_d$ denotes the $d$-dimensional volume, $E$ is an element of the quasi-uniform triangulation $\calT_h$ with vertices $x_j$, and $e\in \pa\calT_h$ is a boundary element. Then we use the above quadratures to define the lumped mass approximation of the inner product $(\cdot,\cdot)$ on $H=L_2(\Omega)\oplus L_2(\Gamma)$:
\begin{equation}
\label{eq-lumped-mass-form}
  (u,v)_\lm := \sum_{E\in\calT_h} Q_E(uv) + \MU \sum_{e\in \pa\calT_h} Q_e(uv),
\end{equation}
We denote the induced (lumped mass) norm by $|\,\cdot\,|_\lm$.

\bigskip
The lumped-mass semi-discrete problem determines $u_h:[0,T] \rightarrow V_h$ \st
\begin{equation}
\label{eq-heat-dynbc-lumped-mass}
    \begin{alignedat}{3}
        (\dot u_h\t,v_h)_\lm + a(u_h\t,v_h) =&\ (f\t,v_h)_\lm \qquad&\ &\ \forall v_h\in V_h \qquad (0<t\leq T)\\
        (u_h(0),v_h)_\lm=&\ (u_0,v_h)_\lm \qquad&\ &\ \forall v_h\in V_h
    \end{alignedat}
\end{equation}
Then the nodal vector $\bfu\t$ satisfies the corresponding linear ODE system
\begin{equation}
\label{eq-lumped-mass-matrix-form}
    \M\dot\bfu(t) + \bfA \bfu(t) = \bfb(t),
\end{equation}
where the new mass matrix, which we denote again by $\bfM$, has the entries
\begin{equation*}
    m_{ij}=(\vphi_j,\vphi_i)_\lm\,, \andquad b_i (t)= (f\t,\vphi_i)_\lm \qquad (i,j=1,\dotsc,N).
\end{equation*}
The new mass matrix is diagonal, since the product $\vphi_j\vphi_i$ vanishes at all nodes with $j\neq i$. The stiffness matrix $\bfA$ is defined as before, see \eqref{eq-mass-stiffness-matrix-def}. The initial values are now chosen such that $u_h(x_j,0)=u(x_j,0)$ in each node of the triangulation.

Before proving error estimates for the semi-discretisation with mass lumping we formulate an important technical lemma. Here $\| \cdot \|$ is again the norm induced by the bilinear form $a(\cdot,\cdot)$ of \eqref{a-heat-dynbc}.

\begin{lemma}
\label{lemma-LM-quadrature-error}
    If $\BETA>0$ in \eqref{b-heat-dynbc}, then the quadrature error  $\calE(v_h,w_h)=(v_h,w_h)_\lm-(v_h,w_h)$ is bounded by    \begin{equation*}
        |\calE(v_h,w_h)| \leq C h^2 \|v_h\| \ \|w_h\| \for v_h,w_h \in V_h.
    \end{equation*}
\end{lemma}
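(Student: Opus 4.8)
The plan is to split $\calE$ into its bulk and boundary contributions, bound each by $h^2$ times a product of gradient norms, and only at the very end invoke $\BETA>0$ to absorb those gradients into the energy norm $\|\cdot\|$. Write
$$
\calE(v_h,w_h)=\calE_\Om(v_h,w_h)+\MU\,\calE_\Ga(v_h,w_h),
$$
with $\calE_\Om(v_h,w_h)=\sum_{E\in\calT_h}\bigl(Q_E(v_hw_h)-\int_E v_hw_h\,\d x\bigr)$ and $\calE_\Ga(v_h,w_h)=\sum_{e\in\pa\calT_h}\bigl(Q_e(\gamma v_h\,\gamma w_h)-\int_e \gamma v_h\,\gamma w_h\,\d\sigma\bigr)$. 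On every element $E$ the functions $v_h,w_h$ are affine, and on every (flat) boundary element $e$ the traces $\gamma v_h,\gamma w_h$ are affine, so in both sums the integrand is a product of two affine functions.

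The core estimate is the standard lumped-mass quadrature bound of \cite[Section 15]{Thomee}, applied elementwise. Since the trapezoidal rule $Q_E$ is exact for affine functions, the error functional $p\mapsto Q_E(p)-\int_E p\,\d x$ annihilates $\mathbb{P}_1$; a Bramble--Hilbert argument on the reference element followed by affine scaling bounds it by $Ch_E^2\,{\rm vol}_d(E)\,|D^2(v_hw_h)|$. The decisive point is to turn this into a \emph{bilinear} quantity rather than a symmetric squared bound: because $v_h,w_h$ are affine their gradients are constant and the Hessian of the product factorises,
$$
D^2(v_hw_h)=\nb v_h\,(\nb w_h)^T+\nb w_h\,(\nb v_h)^T,
$$
so that $|D^2(v_hw_h)|\le 2|\nb v_h|\,|\nb w_h|$; combined with $\|\nb v_h\|_{L_2(E)}={\rm vol}_d(E)^{1/2}|\nb v_h|$ this yields
$$
\Bigl| Q_E(v_hw_h)-\int_E v_hw_h\,\d x\Bigr|\le Ch_E^2\,\|\nb v_h\|_{L_2(E)}\,\|\nb w_h\|_{L_2(E)}.
$$
The identical argument on the $(d-1)$-dimensional simplices $e$, with $\nb$ replaced by the tangential gradient $\nb_\Ga$, gives the analogous bound for $Q_e$.

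Summing these elementwise estimates over $\calT_h$ and $\pa\calT_h$, using $h_E,h_e\le h$ and the discrete Cauchy--Schwarz inequality in the summation index, I obtain
$$
|\calE_\Om(v_h,w_h)|\le Ch^2\|\nb v_h\|_{L_2(\Om)}\|\nb w_h\|_{L_2(\Om)},\qquad
|\calE_\Ga(v_h,w_h)|\le Ch^2\|\nb_\Ga v_h\|_{L_2(\Ga)}\|\nb_\Ga w_h\|_{L_2(\Ga)}.
$$
Finally I use $\BETA>0$. From $\|v_h\|^2=\|\nb v_h\|_{L_2(\Om)}^2+\KAPPA\|\gamma v_h\|_{L_2(\Ga)}^2+\BETA\|\nb_\Ga v_h\|_{L_2(\Ga)}^2$ one reads off $\|\nb v_h\|_{L_2(\Om)}\le\|v_h\|$ and $\|\nb_\Ga v_h\|_{L_2(\Ga)}\le\BETA^{-1/2}\|v_h\|$ (and likewise for $w_h$), whence
$$
|\calE(v_h,w_h)|\le|\calE_\Om(v_h,w_h)|+\MU|\calE_\Ga(v_h,w_h)|\le C\bigl(1+\MU\BETA^{-1}\bigr)h^2\,\|v_h\|\,\|w_h\|,
$$
which is the claim. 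The main obstacle is the elementwise bound of the second paragraph --- specifically extracting the product $\|\nb v_h\|_{L_2(E)}\|\nb w_h\|_{L_2(E)}$, which hinges on the factorisation of the Hessian of a product of affine functions; the summation and the final absorption are routine. It is worth stressing that $\BETA>0$ is indispensable in the last step: in the Wentzell case $\BETA=0$ the energy norm carries no boundary-gradient term, so the boundary quadrature error cannot be controlled this way, which is exactly why mass lumping behaves differently for the two problems.
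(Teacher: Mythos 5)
Your proof is correct and follows essentially the same route as the paper: the paper also splits $\calE=\calE_\Om+\MU\,\calE_\Ga$, invokes the elementwise lumped-mass quadrature bound (citing Lemma 15.1 of Thom\'ee rather than rederiving it via Bramble--Hilbert and the Hessian factorisation as you do), and then absorbs the bulk and surface gradient norms into $\|\cdot\|$ using $\BETA>0$. Your intermediate bounds with pure gradient seminorms are marginally sharper than the $H^1$-norm bounds the paper quotes, but this makes no difference to the conclusion.
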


\begin{proof} We separate the mass lumping errors in the bulk and on the surface as $\calE=\calE_\Om + \mu \calE_\Ga$.
The proof of Lemma 15.1 in \cite{Thomee} shows that
\begin{equation}\label{quad-err-vw}
\begin{aligned}
&|\calE_\Om(v_h,w_h)| \leq C h^2 \|v_h\|_{H^1(\Om)} \ \|w_h\|_{H^1(\Om)}
\\[1mm]
&|\calE_\Ga(v_h,w_h)| \leq C h^2 \|v_h\|_{H^1(\Ga)} \ \|w_h\|_{H^1(\Ga)}
\end{aligned}
 \quad \for v_h,w_h \in V_h.
\end{equation}
This yields the stated bound for $\calE$ for the norm $\| \cdot \|$  induced by the bilinear form $a(\cdot,\cdot)$ of \eqref{a-heat-dynbc}, provided that $\BETA>0$.
\end{proof}

\begin{theorem}
\label{theorem-lumped-mass-error-est}
If the solution of the parabolic problem with dynamic boundary conditions \eqref{heat-dynbc}  is sufficiently regular, then the error of the semi-discretisation  with mass lumping \eqref{eq-heat-dynbc-lumped-mass} satisfies a first-order error bound as in
Theorem~\ref{theorem-spatial-error-est-linear}.
If $\BETA>0$ and Condition~\ref{condition-regularity-beta} is satisfied, then there is  also a second-order error bound as in Theorem~\ref{theorem-spatial-error-est-linear-beta}.
%
\end{theorem}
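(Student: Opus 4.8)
The plan is to follow the proofs of Theorem~\ref{theorem-spatial-error-est-linear} (for the first-order bound) and Theorem~\ref{theorem-spatial-error-est-linear-beta} (for the second-order bound), keeping the Ritz splitting $u_h-u=e_h+(R_hu-u)$ with $e_h:=u_h-R_hu\in V_h$, and treating the mass lumping as a consistency perturbation of the inner product. The term $R_hu-u$ is controlled exactly as before by Lemma~\ref{lemma-Ritz-error} (first order) and Lemma~\ref{lemma-Ritz-error-beta} (second order), so all the new work concerns $e_h$. Inserting the lumped scheme \eqref{eq-heat-dynbc-lumped-mass}, the Ritz relation \eqref{ritz}, and the exact weak form \eqref{P} into $(\dot e_h,v_h)_\lm+a(e_h,v_h)$, I would derive the perturbed error equation
$$
(\dot e_h,v_h)_\lm+a(e_h,v_h)=(\dot u-R_h\dot u,v_h)+\calE(f,v_h)-\calE(R_h\dot u,v_h)\qquad\forall\,v_h\in V_h,
$$
where $\calE(\cdot,\cdot)=(\cdot,\cdot)_\lm-(\cdot,\cdot)$ is the quadrature error of Lemma~\ref{lemma-LM-quadrature-error}. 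Compared with the error equation in the proof of Theorem~\ref{theorem-spatial-error-est-linear}, the only new contributions are the two quadrature terms.

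Since $(\cdot,\cdot)_\lm$ is a positive definite inner product on $V_h$ that is uniformly equivalent to $(\cdot,\cdot)$, the energy estimate \eqref{en-est-h} applies verbatim with $|\cdot|_\lm$ in place of $|\cdot|$. Testing with $v_h=e_h$ and using $a(e_h,e_h)=\|e_h\|^2$ gives
$$
\tfrac12\tfrac{d}{dt}|e_h|_\lm^2+\|e_h\|^2=(\dot u-R_h\dot u,e_h)+\calE(f,e_h)-\calE(R_h\dot u,e_h).
$$
The first term is handled as before by $\|\dot u-R_h\dot u\|_*\,\|e_h\|$ and Young's inequality, with $\|\dot u-R_h\dot u\|_*\le C|\dot u-R_h\dot u|$ being $O(h)$ (first order) or $O(h^2)$ (second order) by the respective Ritz lemmas. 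When $\BETA>0$, Lemma~\ref{lemma-LM-quadrature-error} gives $|\calE(R_h\dot u,e_h)|\le Ch^2\|R_h\dot u\|\,\|e_h\|\le\tfrac14\|e_h\|^2+Ch^4\|R_h\dot u\|^2$, and stability of the Ritz map, $\|R_h\dot u\|\le\|\dot u\|$, turns this into an $O(h^4)$ source term — exactly the order needed for both bounds. The load term $\calE(f,e_h)=\calE(I_hf,e_h)+(I_hf-f,e_h)$ is split, the first part treated by Lemma~\ref{lemma-LM-quadrature-error} and the second by the $L_2$-interpolation estimate $|I_hf-f|=O(h^2)$. The initial error obeys $|e_h(0)|_\lm\le C\bigl(|I_hu(0)-u(0)|+|u(0)-R_hu(0)|\bigr)$, which is $O(h)$ in general and $O(h^2)$ under Condition~\ref{condition-regularity-beta}. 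Gronwall's inequality then yields the two stated bounds, combined with the Ritz estimate for $R_hu-u$.

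The main obstacle is the boundary quadrature term in the Wentzell case $\BETA=0$, where Lemma~\ref{lemma-LM-quadrature-error} is unavailable because the energy norm $\|\cdot\|$ no longer controls $\|\nb_\Ga\cdot\|_{L_2(\Ga)}$. Here I would split $\calE=\calE_\Om+\MU\,\calE_\Ga$ as in the proof of Lemma~\ref{lemma-LM-quadrature-error}. The bulk part is still bounded by $Ch^2\|R_h\dot u\|_{H^1(\Om)}\|e_h\|_{H^1(\Om)}$ via \eqref{quad-err-vw}, and $\|e_h\|_{H^1(\Om)}\le C\|e_h\|$, since with $\KAPPA>0$ the norm $\|\cdot\|$ controls the full $H^1(\Om)$ norm. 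For the surface part I would start from $|\calE_\Ga(R_h\dot u,e_h)|\le Ch^2\|R_h\dot u\|_{H^1(\Ga)}\|e_h\|_{H^1(\Ga)}$ of \eqref{quad-err-vw} and apply the inverse estimate $\|e_h\|_{H^1(\Ga)}\le Ch^{-1}\|e_h\|_{L_2(\Ga)}$, which converts the unavailable $H^1(\Ga)$-control of $e_h$ into the available $L_2(\Ga)$-control at the cost of one power of $h$:
$$
|\calE_\Ga(R_h\dot u,e_h)|\le Ch\,\|R_h\dot u\|_{H^1(\Ga)}\,\|e_h\|_{L_2(\Ga)}.
$$

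Since $\|e_h\|_{L_2(\Ga)}^2\le\MU^{-1}|e_h|^2$ feeds into the Gronwall term, it remains to check that $\|R_h\dot u\|_{H^1(\Ga)}$ stays bounded whenever $\gamma\dot u\in H^1(\Ga)$. For this I would write $R_h\dot u=I_h\dot u+(R_h\dot u-I_h\dot u)$: the first summand is bounded in $H^1(\Ga)$ by interpolation stability, while for the second I note that Lemma~\ref{lemma-Ritz-error} with $\BETA=0$ gives $\|\gamma(\dot u-R_h\dot u)\|_{L_2(\Ga)}=O(h)$, hence $\|R_h\dot u-I_h\dot u\|_{L_2(\Ga)}=O(h)$, and a further inverse estimate makes its $H^1(\Ga)$ norm $O(1)$. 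The resulting source term is $O(h^2)$, which suffices for the first-order bound. No second-order statement is claimed for $\BETA=0$, so this loss of one power of $h$ is harmless, and the proof is complete.
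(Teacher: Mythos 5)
Your proposal is correct and follows essentially the same route as the paper: the same perturbed error equation with quadrature defects $\calE(R_h\dot u,\cdot)$ and $\calE(I_hf,\cdot)+(I_hf-f,\cdot)$, Lemma~\ref{lemma-LM-quadrature-error} with Young's inequality for $\BETA>0$, and the inverse estimate $\|e_h\|_{H^1(\Ga)}\le Ch^{-1}\|e_h\|_{L_2(\Ga)}$ to salvage the surface quadrature term when $\BETA=0$. The only (harmless) differences are that you absorb $\|e_h\|_{L_2(\Ga)}$ via the Gronwall term rather than via the $\KAPPA$-part of the energy norm, and you spell out the boundedness of $\|R_h\dot u\|_{H^1(\Ga)}$, which the paper leaves implicit.
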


\begin{proof}
The proof differs from that of Theorem \ref{theorem-spatial-error-est-linear} in that
the error equation for the lumped mass case for $e_h\t=u_h\t-R_h u\t$ contains extra quadrature error terms: for all $v_h\in V_h,$
$$
    (\dot e_h\t,v_h)_\lm + a(e_h\t,v_h) =\ (\dot u\t - R_h \dot u\t,v_h) - \calE(R_h \dot u\t,v_h) + \calE(I_hf\t,v_h) + (I_hf(t)-f(t),v_h).
$$
We first consider the case $\BETA>0$. The energy estimate obtained by testing with $v_h=e_h(t)$ and using Lemma~\ref{lemma-LM-quadrature-error} to bound, for $w_h=R_h \dot u(t) - I_hf(t)$,
$$
\calE(w_h,e_h) \le C h^2 \|w_h\|\, \|e_h\| \le \tfrac14 \|e_h\|^2 + C'h^4 \|w_h\|^2
$$
then yield the result with $|\cdot|_\lm$ in place of $|\cdot|$.  Together with  the inverse estimate (valid for  quasi-uniform triangularisations)
$$
\| v_h \| \le Ch^{-1} |v_h| \for v_h\in V_h,
$$
Lemma~\ref{lemma-LM-quadrature-error} further provides the equivalence of the norms $|\cdot|$ and $|\cdot|_\lm$ on $V_h$, uniformly in $h$.

In the case $\BETA=0$ we use \eqref{quad-err-vw} and the inverse estimate to obtain (with different constants that are all denoted~$C$)
\begin{eqnarray*}
\calE(w_h,e_h) &\le& Ch^2 \bigl( \| w_h\|_{H^1(\Om)}  \| e_h\|_{H^1(\Om)} + \| w_h\|_{H^1(\Ga)}  \| e_h\|_{H^1(\Ga)} \bigr)
\\
&\le &Ch^2 \bigl( \| w_h\|_{H^1(\Om)}  \| e_h\|_{H^1(\Om)} + \| w_h\|_{H^1(\Ga)}  \, Ch^{-1} \| e_h\|_{L_2(\Ga)} \bigr)
\\
&\le & Ch  \bigl( \| w_h\|_{H^1(\Om)}  + \| w_h\|_{H^1(\Ga)}  \bigr) \|e_h\|
\\
&\le &  \tfrac14 \|e_h\|^2 + Ch^2 \bigl( \| w_h\|_{H^1(\Om)} ^2 + \| w_h\|_{H^1(\Ga)}  ^2\bigr).
\end{eqnarray*}
This is sufficient to obtain the first-order error bound with $|\cdot|_\lm$ in place of $|\cdot|$. Estimating the quadrature error in the above way still yields the $h$-uniform equivalence of the norms $|\cdot|$ and $|\cdot|_\lm$ on $V_h$.
\end{proof}

\begin{remark}
A second-order error bound is not obtained for mass-lumping in the case of Wentzell boundary conditions ($\BETA=0$). A second-order error bound does hold, however, if mass-lumping is done only in the bulk, but not on the surface. For such a partial mass-lumping
Lemma~\ref{lemma-LM-quadrature-error}  remains valid for $\BETA=0$.
\end{remark}

\subsection{Spatial semi-discretisation of non-autonomous linear problems}
\label{subsection-semi-discr-nonauto}

The results of the previous subsection generalize to the heat equation with non-autonomous dynamic boundary conditions \eqref{heat-dynbc-t} on a polygonal domain. Using  the time-dependent Ritz projection corresponding to the bilinear form $a(t;\cdot,\cdot)$ given by \eqref{a-heat-dynbc-t}, we show optimal-order error bounds for the space discretisation. Since the proofs of these results are very similar to the ones presented above, we only focus on the major differences. Again for the ease of presentation we assume $\KAPPA(x,t)>0$ and bounded by $\overline{\KAPPA}$, while $\BETA$ is bounded by $\overline{\BETA}$, hence the form $a(t;\cdot,\cdot)$ induces the norm $\| v \|_t=a(t;v,v)^{1/2}$ on $V=\{v \in H^1(\Omega)\,:\, \sqrt{\beta} \,\nabla_\Gamma(\gamma v) \in L_2(\Gamma)\}$. The inner product on $H=L_2(\Omega)\oplus L_2(\Gamma)$ is given by \eqref{b-heat-dynbc-t}, and induces the norm $|w|_t=m(t;v,v)^{1/2}$ on $H=L_2(\Om)\oplus L_2(\Ga)$. Note that both norms are time dependent, but in each family of norms the members are equivalent to each other uniformly in $t$.

We work with the same family of quasi-uniform triangulations as in the previous section, and also use the same piecewise linear finite element basis functions.
The  semi-discretisation of the non-autonomous problem is
\begin{equation}
\label{heat-dynbc-t-semi-discrete}
    \begin{alignedat}{1}
        m(t;\dot u_h(t),v_h) + a(t;u_h(t),v_h) &= m(t;f\t,v_h) \qquad \forall \,v_h\in V_h \qquad (0<t\le T)\\
        m(0;u_h(0),v_h) &= m(0;u_{0} ,v_h) \qquad\ \, \, \forall \,v_h\in V_h .
    \end{alignedat}
\end{equation}
This now yields a non-autonomous system of linear ordinary differential equations for the coefficient vector $\bfu(t)$,
\begin{equation*}
    \bfM\t\dot \bfu\t + \bfA\t \bfu\t = \bfb\t,
\end{equation*}
where $\bfM\t$ and $\bfA\t$ are the time-dependent mass and stiffness matrix, \resp. Their entries are given as
\begin{equation}
\label{eq-mass-stiffness-matrix-def-t}
    \begin{gathered}
        m_{ij}\t = m(t;\vphi_j,\vphi_i) \andquad
        a_{ij}\t = a(t;\vphi_j,\vphi_i)
    \end{gathered}
    \qquad (i,j=1,2,\dotsc,N),
\end{equation}
and $\bfb\t$ is the load vector, with entries
\begin{equation*}
    b_i\t = m(t;f\t,\vphi_i) \qquad (i=1,2,\dotsc,N).
\end{equation*}

In order to prove optimal order error estimates we begin by studying the Ritz projection $R_h(t):V\to V_h$ with respect to the elliptic bilinear form $a(t;\cdot,\cdot)$ of \eqref{a-heat-dynbc-t}, for $0\leq t\leq T$. For every $u\in V$, the projected function $R_h(t) u\in V_h$ is defined as the unique finite element function in $V_h$ such that
    \begin{equation}\label{ritz-t}
        a(t;R_h(t) u,v_h)=a(t;u,v_h) \qquad \forall v_h\in V_h.
    \end{equation}
 The error bound of Lemma~\ref{lemma-Ritz-error} extends to the time-dependent case using the same proof and observing the uniformity of the bounds in $t$.

\begin{lemma} \label{lemma-Ritz-t}
   The error of the Ritz projection corresponding to the bilinear form  \eqref{a-heat-dynbc-t} satisfies a first-order error bound in the time-dependent energy norm for $0\le t\leq T$,
$$
        \| u-R_h(t)u \|_t^2 \leq \ C h^2 \Bigl( \|u\|_{H^2(\Om)}^2 + \bar\BETA\|\gamma u\|_{H^2(\Ga)}^2 \Bigr),
 $$
 where  $C$ is independent of $h$ and $t$. Here, $\bar\BETA$ is an upper bound of $\BETA(\cdot,t)$.
    \end{lemma}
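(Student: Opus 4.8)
The plan is to mirror the proof of Lemma~\ref{lemma-Ritz-error}, exploiting that $R_h(t)$ is by definition the $a(t;\cdot,\cdot)$-orthogonal projection onto $V_h$ and is therefore the best approximation in the time-dependent energy norm $\|\cdot\|_t$. First I would invoke the Galerkin relation \eqref{ritz-t} to obtain, for each fixed $t\in[0,T]$,
$$
\| u-R_h(t)u \|_t = \min_{v_h\in V_h} \| u-v_h \|_t \le \| u -I_hu \|_t,
$$
where $I_h$ is the (time-independent) piecewise linear interpolation operator. This reduces the task to bounding the interpolation error $\|\ui\|_t^2$ in the $t$-dependent energy norm.

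Next I would expand $\|\ui\|_t^2$ according to \eqref{a-heat-dynbc-t} into its bulk, Robin, and tangential-gradient parts, and then replace the time-dependent coefficients by their uniform upper bounds $\KAPPA(\cdot,t)\le\bar\KAPPA$ and $\BETA(\cdot,t)\le\bar\BETA$. This turns the estimate into essentially the same computation as in \eqref{intpol-err}: the bulk term $\|\nb(\ui)\|_{L_2(\Om)}^2$ and the tangential term $\bar\BETA\,\|\nb_\Ga(\ui)\|_{L_2(\Ga)}^2$ are controlled directly by the standard interpolation estimates on $\Om$ and on $\Ga$, while the Robin term $\bar\KAPPA\,\|\gamma(\ui)\|_{L_2(\Ga)}^2$ is first handled by the trace inequality $\|\gamma(\ui)\|_{L_2(\Ga)}\le c_\Om\|\ui\|_{H^1(\Om)}$ and then by the bulk interpolation bound. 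Collecting the terms yields a bound of the form $Ch^2\bigl(\|u\|_{H^2(\Om)}^2+\bar\BETA\,\|\gamma u\|_{H^2(\Ga)}^2\bigr)$, with $C$ depending only on $\bar\KAPPA$, $c_\Om$, and the interpolation constants.

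The only point requiring attention — and it is precisely the reason the bound holds uniformly in $t$ — is that every $t$-dependence has already been removed by replacing the coefficients with their suprema $\bar\KAPPA,\bar\BETA$ over $[0,T]$, whereas the operator $I_h$ and the interpolation and trace constants depend solely on the fixed quasi-uniform triangulation and on $\Om$. Consequently no term on the right-hand side retains any residual dependence on $t$, and the resulting constant $C$ is independent of both $h$ and $t$. I do not expect any genuine obstacle here; the work is entirely the bookkeeping needed to confirm that the suprema of the coefficients are finite (guaranteed by the boundedness assumptions, $\KAPPA$ bounded by $\bar\KAPPA$ and $\BETA$ by $\bar\BETA$) and that they factor cleanly out of the interpolation estimates.
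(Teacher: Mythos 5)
Your proposal is correct and follows exactly the route the paper intends: the paper's own "proof" of this lemma is the one-line remark that the argument of Lemma~\ref{lemma-Ritz-error} (best approximation in the energy norm, then interpolation and trace estimates) carries over once one observes the uniformity in $t$, which is precisely what you spell out by replacing $\KAPPA(\cdot,t)$ and $\BETA(\cdot,t)$ with their suprema $\bar\KAPPA$, $\bar\BETA$ and noting that the interpolation and trace constants depend only on the triangulation and $\Om$.
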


We  have $\frac d{dt}(R_h(t)u(t)) \ne R_h(t)\dot u(t)$ in general, but we note the following bound.

\begin{lemma}\label{lemma-Ritz-dot}
  The error of the time derivative of the Ritz projection corresponding to the bilinear form  \eqref{a-heat-dynbc-t} satisfies a first-order error bound in the time-dependent energy norm for continuously differentiable $u:[0,T]\to H^2(\Om)$ and $0\le t\leq T$,
$$
        \| \dot u(t)-\tfrac d{dt}\bigl(R_h(t)u(t)\bigr) \|_t^2 \leq \ C h^2 \Bigl( \|u(t)\|_{H^2(\Om)}^2 + \bar\BETA\|\gamma u(t)\|_{H^2(\Ga)}^2 + \|\dot u(t)\|_{H^2(\Om)}^2 + \bar\BETA\|\gamma \dot u(t)\|_{H^2(\Ga)}^2\Bigr),
 $$
 where  $C$ is independent of $h$ and $t$. Here, $\bar\BETA$ is again an upper bound of $\BETA(\cdot,t)$.
\end{lemma}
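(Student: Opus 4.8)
The plan is to differentiate the defining relation \eqref{ritz-t} of the time-dependent Ritz projection in $t$ and to isolate the discrepancy between $\frac{d}{dt}\bigl(R_h(t)u(t)\bigr)$ and $R_h(t)\dot u(t)$, which is the only reason the bound is not immediate from Lemma~\ref{lemma-Ritz-t}. Writing $w(t) := R_h(t)u(t) \in V_h$ and fixing a time-independent test function $v_h \in V_h$, I would differentiate the identity $a(t; w(t), v_h) = a(t; u(t), v_h)$ with respect to $t$. Since $V_h$ is finite-dimensional and the coefficients of $a(t;\cdot,\cdot)$ are continuously differentiable in $t$, the map $t \mapsto w(t)$ solves a linear system with $C^1$ data and is itself $C^1$; the product rule then gives the error equation
$$
a\bigl(t;\, \dot u(t) - \dot w(t),\, v_h\bigr) = -\,\frac{\partial a}{\partial t}\bigl(t;\, u(t) - R_h(t)u(t),\, v_h\bigr) \qquad \forall\, v_h \in V_h ,
$$
which exhibits the source term as $\partial_t a$ applied to the (small) Ritz error $u - R_h(t)u$.

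Next I would introduce the Ritz projection of the time derivative and split
$$
\dot u(t) - \dot w(t) = \underbrace{\bigl(\dot u(t) - R_h(t)\dot u(t)\bigr)}_{=:\,\rho(t)} + \underbrace{\bigl(R_h(t)\dot u(t) - \dot w(t)\bigr)}_{=:\,\theta_h(t)\,\in\, V_h}.
$$
The term $\rho(t)$ is exactly the Ritz error of $\dot u(t)$, so Lemma~\ref{lemma-Ritz-t} applied to $\dot u$ bounds $\|\rho(t)\|_t^2$ by $C h^2\bigl(\|\dot u(t)\|_{H^2(\Om)}^2 + \bar\BETA\|\gamma\dot u(t)\|_{H^2(\Ga)}^2\bigr)$; this is where the regularity hypothesis $\dot u(t)\in H^2(\Om)$ enters. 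For the finite element remainder $\theta_h$, the definition \eqref{ritz-t} gives $a(t;R_h(t)\dot u, v_h) = a(t;\dot u, v_h)$, so combining with the error equation yields, for all $v_h \in V_h$,
$$
a\bigl(t;\, \theta_h(t),\, v_h\bigr) = a\bigl(t;\, \dot u(t) - \dot w(t),\, v_h\bigr) = -\,\frac{\partial a}{\partial t}\bigl(t;\, u(t) - R_h(t)u(t),\, v_h\bigr).
$$

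Finally I would test this with $v_h = \theta_h(t) \in V_h$ and use the bound \eqref{a-dot-bound} on $\partial_t a$ to obtain
$$
\|\theta_h(t)\|_t^2 = a\bigl(t; \theta_h(t), \theta_h(t)\bigr) \le M_1'\, \|u(t) - R_h(t)u(t)\|_t\, \|\theta_h(t)\|_t ,
$$
whence $\|\theta_h(t)\|_t \le M_1'\,\|u(t)-R_h(t)u(t)\|_t$, and Lemma~\ref{lemma-Ritz-t} applied to $u$ bounds the right-hand side by $O(h)$ in terms of $\|u(t)\|_{H^2(\Om)}$ and $\bar\BETA\,\|\gamma u(t)\|_{H^2(\Ga)}$. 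Combining the estimates for $\rho$ and $\theta_h$ via $\|\dot u - \dot w\|_t^2 \le 2\|\rho\|_t^2 + 2\|\theta_h\|_t^2$ then gives the asserted bound. The only genuinely non-routine step is the first one: recognising that, although $\frac{d}{dt}\bigl(R_h(t)u\bigr) \ne R_h(t)\dot u$ in the non-autonomous setting, the commutator is driven precisely by $\partial_t a$ acting on the Ritz error, and is therefore controllable by \eqref{a-dot-bound} together with the already established first-order bound of Lemma~\ref{lemma-Ritz-t}; everything else is a standard energy argument within the finite-dimensional space $V_h$.
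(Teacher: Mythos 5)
Your proposal is correct and rests on the same key identity as the paper: differentiating the defining relation \eqref{ritz-t} to obtain $a(t;\dot u-\tfrac{d}{dt}(R_hu),v_h)=-\partial_t a(t;u-R_hu,v_h)$, then controlling everything via \eqref{a-dot-bound} and Lemma~\ref{lemma-Ritz-t} applied to both $u$ and $\dot u$. The only difference is organisational: you split $\dot u-\tfrac{d}{dt}(R_hu)=\rho+\theta_h$ and test the $\theta_h$-equation with $\theta_h$ itself, which yields the clean bound $\|\theta_h\|_t\le M_1'\|u-R_hu\|_t$ and avoids the paper's expand-and-absorb step, but the substance is the same.
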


\begin{proof} Differentiating \eqref{ritz-t} with respect to $t$ yields
$$
a\bigl(t; \dot u(t) - \tfrac d{dt}(R_h(t)u(t)),v_h\bigr) = - {\partial_t}a\bigl(t;u(t)-R_h(t)u(t),v_h\bigr) \qquad \forall v_h\in V_h.
$$
We then have, omitting the argument $t$,
\begin{eqnarray*}
 \| \dot u - \tfrac d{dt}(R_hu)\|_t^2 &=& a(t;  \dot u - \tfrac d{dt}(R_hu), \dot u - R_h\dot u) +
 a(t;  \dot u - \tfrac d{dt}(R_hu),R_h\dot u - \tfrac d{dt}(R_hu))
 \\
 &=& a(t;  \dot u - \tfrac d{dt}(R_hu), \dot u - R_h\dot u) - {\partial_t}a\bigl(t;u-R_hu,R_h\dot u - \tfrac d{dt}(R_hu))
 \\
 &=& a(t;  \dot u - \tfrac d{dt}(R_hu), \dot u - R_h\dot u) - {\partial_t}a\bigl(t;u-R_hu,R_h\dot u - \dot u) -
 {\partial_t}a\bigl(t;u-R_hu,\dot u - \tfrac d{dt}(R_hu)).
\end{eqnarray*}
With \eqref{a-dot-bound} and Lemma~\ref{lemma-Ritz-t}, this is estimated as
\begin{eqnarray*}
\| \dot u - \tfrac d{dt}(R_hu)\|_t^2 &\le& \| \dot u - \tfrac d{dt}(R_hu)\|_t \,
\sqrt C h \Bigl( \|\dot u\|_{H^2(\Om)}^2 + \bar\BETA\|\gamma \dot u\|_{H^2(\Ga)}^2\Bigr)^{1/2}
\\
&& +\
M_1' C h^2 \Bigl( \|u\|_{H^2(\Om)}^2 + \bar\BETA\|\gamma u\|_{H^2(\Ga)}^2 \Bigr)^{1/2}
\Bigl( \|\dot u\|_{H^2(\Om)}^2 + \bar\BETA\|\gamma \dot u\|_{H^2(\Ga)}^2\Bigr)^{1/2}
\\
&& +\
M_1' \sqrt C h \Bigl( \|u\|_{H^2(\Om)}^2 + \bar\BETA\|\gamma u\|_{H^2(\Ga)}^2 \Bigr)^{1/2}\,
\| \dot u - \tfrac d{dt}(R_hu)\|_t,
\end{eqnarray*}
which yields the result.
\end{proof}

We then have the following non-autonomous version of Theorem~\ref{theorem-spatial-error-est-linear}.
\begin{theorem}
\label{theorem-spatial-error-est-nonauto}
    If the solution of the linear non-autonomous parabolic equation with dynamic boundary condition  \eqref{heat-dynbc-t} is sufficiently regular, then the error of the semi-discretisation \eqref{heat-dynbc-t-semi-discrete} satisfies the first-order error bound
    \begin{equation*}
        \begin{alignedat}{1}
            & \|u_h\t - u\t\|_{L_2(\Om)}^2 +  \int_\Ga \MU(.,t)\Big(\gamma u_h(t) - \gamma u(t)\Big)^2\d \sigma + \int_0^t \big\|\nb\big(u_h(s) - u(s)\big)\big\|_{L_2(\Om)}^2 \d s \\
            & + \int_0^t \int_\Ga \BETA(.,s)\bigl(\nb_\Ga u_h(s) - \nb_\Ga u(s)\bigr)^2 \d \sigma \d s + \int_0^t \int_\Ga \KAPPA(.,s)\bigl(\gamma u_h(s) - \gamma u(s)\bigr)^2 \d \sigma \d s \leq C h^2,
        \end{alignedat}
    \end{equation*}
for $0<t\leq T$, where the constant $C$ is independent of $h$ and $t$, but depends on $T$.
\end{theorem}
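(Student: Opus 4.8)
The plan is to follow the proof of Theorem~\ref{theorem-spatial-error-est-linear} almost verbatim, replacing the stationary Ritz projection by the time-dependent one $R_h(t)$ of \eqref{ritz-t} and absorbing the extra terms caused by the time dependence of $m$ and of $R_h$ into the energy argument. First I would split the error as $u_h(t)-u(t)=e_h(t)+\varrho_h(t)$ with $e_h(t)=u_h(t)-R_h(t)u(t)$ and $\varrho_h(t)=R_h(t)u(t)-u(t)$. Since $V_h\subset V$, the exact solution also tests against $v_h\in V_h$ in \eqref{P-t}, so subtracting the two equations and invoking the defining relation \eqref{ritz-t}, which makes $a(t;\varrho_h(t),v_h)=0$, gives the error equation
\[
m\bigl(t;\dot e_h(t),v_h\bigr)+a\bigl(t;e_h(t),v_h\bigr)=m\bigl(t;\dot u(t)-\tfrac{d}{dt}(R_h(t)u(t)),v_h\bigr)\qquad\forall\,v_h\in V_h .
\]
The key point is that the right-hand side is driven precisely by the quantity $g_h:=\dot u-\tfrac{d}{dt}(R_h u)$ bounded in Lemma~\ref{lemma-Ritz-dot}; this is where the non-autonomous proof genuinely departs from the stationary one.

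Next I would carry out the energy technique of \eqref{en-est-t} in $V_h$. Testing with $v_h=e_h(t)$ and using $\tfrac12\tfrac{d}{dt}|e_h|_t^2=m(t;\dot e_h,e_h)+\tfrac12\tfrac{\partial m}{\partial t}(t;e_h,e_h)$ together with $a(t;e_h,e_h)=\|e_h\|_t^2$ (recall $\KAPPA(\cdot,t)>0$), bounding the derivative term by $\tfrac12 M_0'|e_h|_t^2$ via \eqref{m-dot-bound} and the right-hand side by $\tfrac12\|g_h\|_{*,t}^2+\tfrac12\|e_h\|_t^2$, yields a differential inequality that on integration and Gronwall gives the semi-discrete analogue of \eqref{en-est-t},
\[
|e_h(t)|_t^2+\int_0^t\|e_h(s)\|_s^2\,\d s\le e^{2M_0' t}\Bigl(|e_h(0)|_0^2+\int_0^t\|g_h(s)\|_{*,s}^2\,\d s\Bigr).
\]

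It then remains to bound the two terms on the right. For the initial term, the choice $m(0;u_h(0),v_h)=m(0;u_0,v_h)$ makes $u_h(0)$ the $m(0;\cdot,\cdot)$-orthogonal projection of $u_0$, so $|e_h(0)|_0\le|u_h(0)-u_0|_0+|u_0-R_h(0)u_0|_0\le 2|u_0-R_h(0)u_0|_0$, which is $O(h)$ by Lemma~\ref{lemma-Ritz-t} because the $H$-norm is weaker than the energy norm, $|v|_t\le C\|v\|_t$. Likewise, since the dual norm is weaker than the energy norm, $\|g_h\|_{*,s}\le C\|g_h\|_s$, Lemma~\ref{lemma-Ritz-dot} gives $\|g_h(s)\|_{*,s}^2=O(h^2)$ uniformly in $s$, so its time integral is $O(h^2)$ with a constant depending on $T$ and on $\|u\|_{H^2}$, $\|\dot u\|_{H^2}$ along the solution. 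Hence $|e_h(t)|_t^2+\int_0^t\|e_h(s)\|_s^2\,\d s=O(h^2)$. Finally I would reassemble the error by the triangle inequality: by \eqref{a-heat-dynbc-t} and \eqref{b-heat-dynbc-t} the left-hand side of the theorem equals $|u_h(t)-u(t)|_t^2+\int_0^t\|u_h(s)-u(s)\|_s^2\,\d s$, so splitting $u_h-u=e_h+\varrho_h$ and bounding $|\varrho_h|_t^2\le C\|\varrho_h\|_t^2$ and $\int_0^t\|\varrho_h(s)\|_s^2\,\d s$ by Lemma~\ref{lemma-Ritz-t} completes the $O(h^2)$ estimate.

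The main obstacle is the non-commutativity $\tfrac{d}{dt}(R_h(t)u(t))\ne R_h(t)\dot u(t)$, which blocks the clean cancellation available in the autonomous case; the sole purpose of the preparatory Lemma~\ref{lemma-Ritz-dot} (obtained by differentiating \eqref{ritz-t} and controlling $\partial_t a$ through \eqref{a-dot-bound}) is to keep the residual $g_h$ small in the energy norm. A secondary point, requiring care but no real difficulty, is the bookkeeping of the $M_0'$ contribution from the time-varying inner product, which only enters the harmless Gronwall factor.
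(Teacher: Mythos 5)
Your proposal is correct and follows essentially the same route as the paper: the paper's proof likewise writes the error equation for $e_h=u_h-R_h(t)u$, applies the non-autonomous energy estimate \eqref{en-est-t} with $\alpha=1$, $c=0$, and concludes with Lemmas~\ref{lemma-Ritz-t} and~\ref{lemma-Ritz-dot}. The details you supply (the $m(0;\cdot,\cdot)$-projection bound for $e_h(0)$, the dual-norm estimate for the residual $\dot u-\tfrac{d}{dt}(R_hu)$, and the triangle-inequality reassembly) are exactly the steps the paper leaves implicit by reference to the proof of Theorem~\ref{theorem-spatial-error-est-linear}.
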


\begin{proof}
The error equation for $e_h=u_h-R_hu$ is now
\begin{equation*}
    m(t;\dot e_h\t,v_h) + a(t;e_h\t,v_h) = m(t;\dot u\t - \tfrac d{dt}(R_h(t)u(t)),v_h) \qquad \forall v_h\in V_h.
\end{equation*}
The energy estimate \eqref{en-est-t} (with $\alpha=1$ and $c=0$) thus gives us
$$
|e_h(t)|_t^2 +  \int_0^t \| e_h(s) \|_s^2 \, \d s \le e^{2 M_0' t}
\Bigl( |e_h(0)|_0^2 + \int_0^t  \| \dot u(s) - \tfrac d{ds}(R_h(s)u(s)) \|_{*,s}^2\, \d s \Bigr),
$$
and the result follows with Lemmas~\ref{lemma-Ritz-t} and~\ref{lemma-Ritz-dot}.
\end{proof}

Under appropriate $H^2$-regularity conditions, second-order error bounds are obtained in the same way as in Subsections~\ref{subsubsec:second-wentzell}
and~\ref{subsubsec:second-bsdc}.

Mass lumping can be discussed as in Subsection~\ref{subsubsec:mass-lumping}. In particular, the second-order error bound of Theorem~\ref{theorem-spatial-error-est-nonauto} remains valid for mass lumping both in the bulk and on the surface for strictly positive $\beta(x,t)$, and for mass lumping in the bulk, but not on the surface for the case $\beta\equiv 0$.

\subsection{Spatial semi-discretisation of semi-linear  problems}

The above techniques readily extend  to semi-linear   parabolic problems with dynamic boundary conditions.
Let us consider the following semi-linear  problem, which includes the first two examples in Section~\ref{subsec:nonlinear-examples}:
\begin{equation}
\label{eq-general-dynbc-semi-linear -problem}
    \begin{alignedat}{3}
        \pa_t u =&\ \laplace u + f_\Om(u) \qquad &\ &\ \textnormal{in } \ \Om \\
        \MU\, \pa_t u =&\ \BETA \laplace_\Ga u - \KAPPA u +\mu  f_\Ga(u) -\pa_\nu u \qquad &\ &\ \textnormal{in } \ \Ga,
    \end{alignedat}
\end{equation}
with $\MU>0$ and $\BETA\ge 0$, and where the nonlinearities $f_\Omega:\R\to\R$ and $f_\Gamma:\R\to\R$
are continuously differentiable functions, and hence locally Lipschitz continuous.
%
For $u\in C(\bar\Omega)$ we use the notation $f(u)=\big(f_\Om(u), f_\Ga(u)\big)^T\in C(\bar\Omega) \oplus C(\Gamma)$ with $f_\Omega(u)(x)=f_\Omega(u(x))$ for $x\in \bar\Omega$ and $f_\Ga(u)(x)=f_\Ga(u(x))$ for $x\in\Gamma$.

The bilinear forms corresponding to the problem are the same as in \eqref{a-heat-dynbc}-\eqref{b-heat-dynbc}, again taken with a positive $\KAPPA$ to simplify the presentation (otherwise the linear term can be absorbed in $f_\Gamma(u)$). The finite element semi-discretisation of the semi-linear  problem is
\begin{equation}
\label{semi-linear -dynbc-semi-discrete}
  (\dot u_h(t),v_h) + a(u_h(t),v_h) = (f(u_h\t),v_h) \qquad \forall \,v_h\in V_h \qquad (0<t\le T).
\end{equation}
For the coefficient vector $\bfu(t)$ this yields the nonlinear system of ordinary differential equations
\begin{equation*}
    \bfM\dot \bfu(t) + \bfA \bfu(t) = \bff(\bfu(t)),
\end{equation*}
where $\bfM$ is the mass matrix, $\bfA$ is the stiffness matrix, both defined in \eqref{eq-mass-stiffness-matrix-def}, and $\bff(\bfu(t))$ is the vector with entries
$f_i(\bfu(t))= (f(u_h\t),\vphi_i)$  ($i=1,2,\dotsc,N$).
We give the semi-linear  analogue  of Theorems \ref{theorem-spatial-error-est-linear}--\ref{theorem-spatial-error-est-linear-beta}.
\begin{theorem}
\label{theorem-spatial-error-est-semi-linear } Consider the semi-linear  equation with dynamic boundary condition \eqref{eq-general-dynbc-semi-linear -problem} over a polygonal domain $\Omega\subset\R^d$ with $d\le 3$. 
The following error bounds hold  if the solution of  \eqref{eq-general-dynbc-semi-linear -problem} is sufficiently regular, if Conditions  \ref{condition-regularity-0} and \ref{condition-regularity-beta} are satisfied and if the initial data satisfy
$
\|u_h(0) - R_hu(0)\|_{L_2(\Om)}^2 + \MU \| \gamma(u_h(0) - R_hu(0))\|_{L_2(\Ga)}^2
\le C_0 h^4$.
Then, the error of the semi-discretisation \eqref{semi-linear -dynbc-semi-discrete} by linear finite elements on quasi-uniform triangulations  satisfies the first-order error bound
    \begin{equation}
    \label{eq-semi-discr-error-bound-semi-linear -1}
        \begin{alignedat}{1}
            &\|u_h\t - u\t\|_{L_2(\Om)}^2 + \MU \| \gamma(u_h\t - u\t)\|_{L_2(\Ga)}^2  \\
            & + \int_0^t \!\! \bigg(\big\|\nb\big(u_h(s) - u(s)\big)\big\|_{L_2(\Om)}^2 + \BETA \big\|\nb_\Ga\big(u_h(s) - u(s)\big)\big\|_{L_2(\Ga)}^2 + \KAPPA\|\gamma(u_h(s) - u(s))\|_{L_2(\Ga)}^2 \bigg) \d s \leq C_1 h^2,
        \end{alignedat}
    \end{equation}
    for $0<t\leq T$ and $0<h\le h_0$ with a sufficiently small $h_0$.
    Moreover,  there is the second-order error bound
    \begin{equation} \label{eq-semi-discr-error-bound-semi-linear -2}
        \|u_h\t - u\t\|_{L_2(\Om)}^2 + \MU \| \gamma(u_h\t - u\t)\|_{H^{-\theta}(\Ga)}^2 \leq C_2 h^4,
    \end{equation}
    where $\theta=1/2$ if $\beta=0$, and $\theta=0$ if $\beta>0$ in \eqref{eq-general-dynbc-semi-linear -problem}.
    The constants $C_1$ and $C_2$ are independent of $h$ and~$t$, but depend on $T$.
\end{theorem}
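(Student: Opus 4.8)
The plan is to transcribe the proof of the linear Theorems~\ref{theorem-spatial-error-est-linear}--\ref{theorem-spatial-error-est-linear-beta} almost verbatim, the only genuinely new ingredient being a continuation (bootstrap) argument that turns the merely local Lipschitz continuity of $f_\Om,f_\Ga$ into a uniform Lipschitz bound. As before I would split the error as $u_h-u=e_h+(R_hu-u)$ with $e_h=u_h-R_hu$, so that the Ritz part $R_hu-u$ is already controlled: by Lemma~\ref{lemma-Ritz-error} in the energy norm (first order) and, under Conditions~\ref{condition-regularity-0} or~\ref{condition-regularity-beta}, by Lemma~\ref{lemma-Ritz-error-0} or~\ref{lemma-Ritz-error-beta} in the weaker $L_2(\Om)\oplus H^{-\theta}(\Ga)$ norm (second order). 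Subtracting the weak form \eqref{P-nl} of \eqref{eq-general-dynbc-semi-linear -problem}, tested against $v_h\in V_h$, from the semi-discretisation \eqref{semi-linear -dynbc-semi-discrete}, and using the Galerkin orthogonality $a(R_hu-u,v_h)=0$, gives the error equation
\begin{equation*}
(\dot e_h(t),v_h)+a(e_h(t),v_h)=\bigl(f(u_h(t))-f(u(t)),v_h\bigr)+\bigl(\dot u(t)-R_h\dot u(t),v_h\bigr)\qquad\forall\,v_h\in V_h,
\end{equation*}
which is precisely the error equation of Theorem~\ref{theorem-spatial-error-est-linear} with the extra nonlinear source $\bigl(f(u_h)-f(u),v_h\bigr)$.

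Next I would test with $v_h=e_h(t)$ and handle the right-hand side term by term, \emph{assuming for the moment} a uniform Lipschitz constant $L$ for $f_\Om,f_\Ga$ (justified in the last step). The last source term is exactly the linear one and is bounded by $\tfrac14\|e_h\|^2+C\|\dot u-R_h\dot u\|_*^2$, with the dual norm of order $O(h^2)$ (first order) or $O(h^2)$ in the weaker norm via the computation inside the proof of Theorem~\ref{theorem-spatial-error-est-linear-0}. For the nonlinearity I split $f(u_h)-f(u)=\bigl(f(u_h)-f(R_hu)\bigr)+\bigl(f(R_hu)-f(u)\bigr)$; the first difference is bounded pointwise by $L|e_h|$ in both bulk and surface, so $\bigl(f(u_h)-f(R_hu),e_h\bigr)\le C|e_h|^2$, which is absorbed into the Gronwall term, while the second difference satisfies $\|f(R_hu)-f(u)\|_{L_2}\le L\|R_hu-u\|_{L_2}=O(h^2)$ componentwise and so contributes $\tfrac14\|e_h\|^2+Ch^4$. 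Integrating, using the initial bound $|e_h(0)|^2\le C_0h^4$, and applying Gronwall yields $|e_h(t)|^2+\int_0^t\|e_h\|^2\,\d s=O(h^4)$; combined with the first-order Ritz bound this gives \eqref{eq-semi-discr-error-bound-semi-linear -1}. For \eqref{eq-semi-discr-error-bound-semi-linear -2} I would rerun the same energy estimate in the weaker norm, measuring both $\dot u-R_h\dot u$ and $f(R_hu)-f(u)$ in $\|\cdot\|_*$ (using $\|\gamma w\|_{H^{-\theta}(\Ga)}\le\|\gamma w\|_{L_2(\Ga)}$) and invoking Lemma~\ref{lemma-Ritz-error-0} or~\ref{lemma-Ritz-error-beta}, exactly as in Theorems~\ref{theorem-spatial-error-est-linear-0} and~\ref{theorem-spatial-error-est-linear-beta}.

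The main obstacle, and the only place where $d\le 3$ is used, is supplying the uniform Lipschitz constant. I would run a continuation argument: let $t^\ast\in[0,T]$ be maximal such that $\|u_h(s)-u(s)\|_{L_\infty(\Om)}\le 1$ (and the analogous surface bound) for $0\le s\le t^\ast$. On $[0,t^\ast]$ the functions $u_h$ then lie in a fixed ball around $u$, so $f_\Om,f_\Ga$ are Lipschitz there with a constant $L$ independent of $h$, and the estimates above apply and give $\|e_h(s)\|_{L_2(\Om)}=O(h^2)$ on $[0,t^\ast]$. The inverse estimate $\|e_h\|_{L_\infty(\Om)}\le Ch^{-d/2}\|e_h\|_{L_2(\Om)}=O(h^{2-d/2})$ together with $\|R_hu-u\|_{L_\infty(\Om)}=O(h^{2-d/2})$ then shows $\|u_h(s)-u(s)\|_{L_\infty(\Om)}\to0$ uniformly on $[0,t^\ast]$; since $2-d/2>0$ precisely for $d\le3$, this is strictly below $1$ once $h\le h_0$, so by continuity $t^\ast=T$. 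This closes the bootstrap and establishes both error bounds on all of $[0,T]$. Everything else being a direct transcription of the linear arguments, this continuation step — and the accompanying inverse estimates on the bulk and the boundary — is where I expect the only real care to be needed.
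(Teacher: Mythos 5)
Your proposal is correct and follows essentially the same route as the paper: the same decomposition $u_h-u=e_h+(R_hu-u)$, the same error equation with the extra nonlinear source, the same local-Lipschitz bound $\|f(u_h)-f(u)\|_*\le L(r)\,|e_h|+L(r)\,|R_hu-u|$, and the same use of the inverse estimate $\|v_h\|_{L_\infty(\Om)}\le Ch^{-d/2}\|v_h\|_{L_2(\Om)}$, which is exactly where $d\le 3$ enters. Your explicit continuation argument in $t^\ast$ is simply a more careful rendering of the paper's implicit bootstrap (``the last but one inequality holds true as long as \eqref{eq-semi-discr-error-bound-semi-linear -2} is valid''), so no substantive difference remains.
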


\begin{proof}
The error equation for $e_h(t)=u_h(t)-R_hu(t)$ reads, with the elliptic bilinear form $a$ of \eqref{a-heat-dynbc} and the inner product \eqref{b-heat-dynbc},
\begin{align*}
    (\dot e_h\t,v_h) + a(e_h\t,v_h) =&\ (\dot u\t - R_h \dot u\t,v_h) + \big(f(u_h\t)-f(u\t),v_h\big)
    \qquad \forall v_h\in V_h.
\end{align*}
We use again the energy estimate, which now becomes
$$
|e_h(t)|^2 + \int_0^t \| e_h(s) \|^2 \,\d s \le |e_h(0)|^2 + \int_0^t \|\dot u(s) - R_h \dot u(s)\|_*^2 \,\d s +
\int_0^t \|f(u_h(s))-f(u(s))\|_*^2 \,\d s.
$$
As long as $\|u(t)\|_{L_\infty(\Om)} \le r$ and $\|u_h(t)\|_{L_\infty(\Om)} \le r$, the local Lipschitz continuity of $f_\Omega$ and $f_\Gamma$ yields (omitting the argument $t$)
$$
\| f(u_h) - f(u) \|_* \le L(r) |u_h-u| \le L(r) |e_h| + L(r) |R_h u -u|.
$$
The maximum-norm boundedness of $u_h$ is ensured by an inverse estimate:
\begin{eqnarray*}
    \|u_h\|_{L_\infty(\Om)} &\leq& \|u_h-I_hu\|_{L_\infty(\Om)} + \|I_hu\|_{L_\infty(\Om)}
        \\
     &\leq& C h^{-d/2} \|u_h-I_hu\|_{L_2(\Om)} + \|u\|_{L_\infty(\Om)} \\
         &\leq& C h^{-d/2} \|u_h-u\|_{L_2(\Om)} +C h^{-d/2} \|u-I_hu\|_{L_2(\Om)} + \|u\|_{L_\infty(\Om)} \\
    & \leq&      C C_2h^{2-d/2} +C h^{2-d/2} \|u\|_{H^2(\Om)} + \|u\|_{L_\infty(\Om)}   
   \\
    & \leq&   1+\|u\|_{L_\infty(\Om)}  
    \end{eqnarray*}
for sufficiently small $h$, where the last but one inequality holds true as long as    \eqref{eq-semi-discr-error-bound-semi-linear -2} is valid.
The error bounds for the Ritz projection given by Lemmas~\ref{lemma-Ritz-error}--\ref{lemma-Ritz-error-beta}
and a Gronwall inequality conclude the proof.
\end{proof}

Mass lumping can  be studied as before, yielding the semi-linear  analogue  of Theorem~\ref{theorem-lumped-mass-error-est}.

\section{The Ritz map for non-polygonal domains}

For smooth domains the polygonal approximation of the bulk and the surface requires extra care in the error analysis of spatial discretisations; cf., e.g., \cite{Dziuk88,DziukElliott_ESFEM,ElliottRanner}. In this section we will discuss spatial semi-discretisations of problems with dynamic boundary conditions on a smooth domain $\Om$  with boundary $\Ga$.

\subsection{Preparation: Lifts and their approximation estimates}
\newcommand{\DIM}{2} 

%

In this subsection we describe the setting and recall some approximation results from \cite{Dziuk88,DziukElliott_ESFEM,ElliottRanner}.

The smooth domain $\Om$ is approximated by a polygonal domain $\Om_h$ with boundary surface $\Ga_h:=\pa \Om_h$, triangulated with a mesh size $h$ that is assumed sufficiently small in all the following. In particular, we require that for every point $x\in\Ga_h$ there is a unique point $p\in\Ga$ such that $x-p$ is orthogonal to the tangent space $T_p\Ga$ of $\Ga$ at $p$ (see Figure~\ref{fig:Omega_h}).
We  assume that the vertices of $\Ga_h$ are  on $\Ga$, i.e., $\Ga_h$ is an interpolation of $\Ga$.
We consider a family of quasi-uniform triangulations $\calT_h$, parametrised by the maximal meshwidth $h$.

Following \cite{Dziuk88}, we define a {\it lift} of functions $v_h:\Ga_h\to \R$ to $v_h^l:\Ga\to\R$ by setting $v_h^l(p)=v_h(x)$ for $p\in\Gamma$, where $x\in\Ga_h$ is the unique point on $\Ga_h$ with $x-p$  orthogonal to the tangent space $T_p\Ga$.
As in \cite{ElliottRanner}, we define a {\it lift} of functions $v_h:\Om_h\to \R$ to $v_h^l:\Om\to\R$ by setting $v_h^l(p)=v_h(x)$  if $x\in\Omega_h$ and $p\in \Omega$ are related as in Figure~\ref{fig:Omega_h}; see \cite{ElliottRanner} for a precise formal definition. Note that both definitions coincide on $\Gamma$.

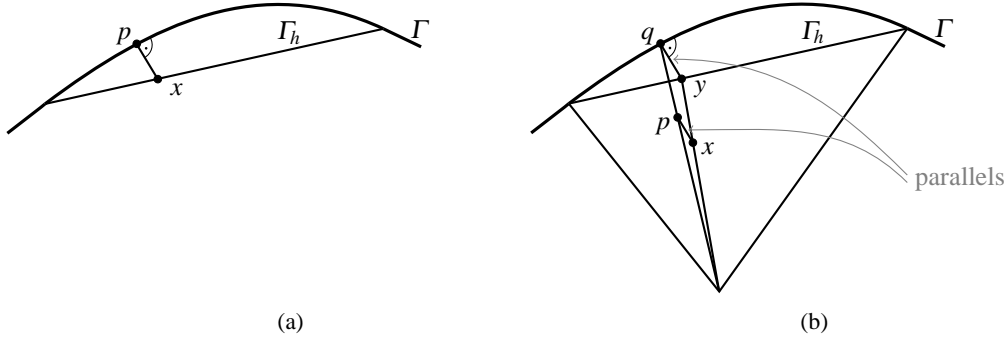
\begin{figure}
\begin{center}
\begin{subfigure}[b]{0.45\textwidth}
\begin{tikzpicture}
    \draw[very thick] (0,4) to [out=37,in=155] (4.5,5);
    \draw[thick] (0,4) -- (4.5,5);
    \draw[very thick] (0,4) to [out=37,in=38] (-0.5,3.61);
    \draw[very thick] (4.5,5) to [out=155,in=154] (5,4.76);
    \node at (3.25,4.95) {$\Ga_h$};
    \node at (5,5) {$\Ga$};
    \draw[white] (2,2) -- (2,1.5);
    \draw [fill] (1.5,4.33) circle [radius=0.05];
    \node at (1.75,4.2) {$x$};
    \draw[thick] (1.5,4.33) -- (1.21,4.81);
    \draw [fill] (1.22,4.8) circle [radius=0.05];
    \node at (1.05,4.9) {$p$};
    \draw [fill] (1.345,4.74) circle [radius=0.01];
    \draw (1.4,4.88) to [out=300,in=30] (1.3260,4.6180);
\end{tikzpicture}
\caption{}
\end{subfigure}
\quad
\begin{subfigure}[b]{0.45\textwidth}
\begin{tikzpicture}
    \draw[very thick] (0,4) to [out=37,in=155] (4.5,5);
    \draw[thick] (0,4) -- (4.5,5);
    \draw[very thick] (0,4) to [out=37,in=38] (-0.5,3.61);
    \draw[very thick] (4.5,5) to [out=155,in=154] (5,4.76);
    \node at (3.25,4.95) {$\Ga_h$};
    \node at (5,5) {$\Ga$};
    \draw[thick] (0,4) -- (2,1.5) -- (4.5,5);
    \draw [fill] (1.5,4.33) circle [radius=0.05];
    \node at (1.75,4.2) {$y$};
    \draw[thick] (1.5,4.33) -- (1.21,4.81);
    \draw [fill] (1.22,4.8) circle [radius=0.05];
    \node at (1.05,4.9) {$q$};
    \draw[thick] (1.5,4.33) -- (2,1.5); 
    \draw[thick] (2,1.5) -- (1.21,4.81); 
    \draw [fill] (1.65,3.481) circle [radius=0.05]; 
    \node at (1.85,3.4) {$x$};
    \draw [fill] (1.447,3.817) circle [radius=0.05]; 
    \node at (1.25,3.7) {$p$};
    \draw[thick] (1.65,3.481) -- (1.447,3.817); 
    \draw [fill] (1.345,4.74) circle [radius=0.01];
    \draw (1.4,4.88) to [out=300,in=30] (1.3260,4.6180);
    \draw [->,thin, gray] (4.5,3.05) to [out=135,in=0] (1.39,4.59);
    \draw [->,thin, gray] (4.5,2.95) to [out=135,in=0] (1.59,3.65);
    \node [gray] at (5.2,3) {parallels};
\end{tikzpicture}
\caption{}
\end{subfigure}
\end{center}
\caption{(a) shows the lift for surface functions: $v_h^l(p)=v_h(x)$; (b) shows the lift for bulk functions: $v_h^l(p)=v_h(x)$.}
\label{fig:Omega_h}
\end{figure}

The finite element space $V_h\nsubseteq H^1(\Om)$ corresponding to $\calT_h$ is spanned by continuous, piecewise linear nodal basis functions on $\Omega_h$, as in Section \ref{subsection-FEM}. We note here that  the restrictions of the basis functions to the boundary $\Ga_h$ again form a basis over the approximate boundary elements. We denote by
$V_h^l=\{ v_h^l\,:\, v_h\in V_h\}\subset V$ the lifted finite element space.

\begin{lemma}
\label{lemma-nonpoly-interpolation}
    For $v\in H^2(\Om)$, such that $\gamma v \in H^2(\Ga)$, we denote by $I_h v\in V_h^l$ the lift of the finite element interpolation $\widetilde{I}_h v\in V_h$. Then the following estimates hold:
    \begin{enumerate}
      \item[(i)] Interpolation error in the bulk; see \cite{Bernardi,ElliottRanner}:
        \begin{equation*}
            \|v - I_h v\|_{L^2(\Om)} + h \|\nb(v - I_h v)\|_{L^2(\Om)} \leq C h^2 \|v\|_{H^2(\Om)}.
        \end{equation*}
      \item[(ii)] Interpolation error on the surface; see \cite{Dziuk88}:
        \begin{equation*}
            \|\gamma(v - I_h v)\|_{L^2(\Ga)} + h \|\nb_\Ga(v - I_h v)\|_{L^2(\Ga)}
            \leq Ch^2 \|\gamma v\|_{H^2(\Ga)}.
        \end{equation*}
    \end{enumerate}
\end{lemma}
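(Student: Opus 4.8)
The plan is to reduce both estimates to classical Lagrange interpolation bounds on the \emph{discrete} objects $\Om_h$ and $\Ga_h$, where the elements are flat simplices and the standard polynomial interpolation theory applies verbatim, and then to transport these bounds to $\Om$ and $\Ga$ through the lift. This transport is possible because the lift is a bi-Lipschitz map whose geometric distortion is controlled by powers of $h$. Concretely, I would first record the geometric perturbation estimates from \cite{Dziuk88,ElliottRanner}: on the surface, the ratio $\delta_h$ of the area elements of $\Ga_h$ and $\Ga$ satisfies $\|1-\delta_h\|_{L^\infty(\Ga_h)}\le Ch^2$, and the tangential gradients on $\Ga_h$ and on $\Ga$ are related by a matrix field that equals the identity up to terms of order $h$; analogous estimates, now including the behaviour of the bulk lift on the thin boundary strip between $\Ga_h$ and $\Ga$, hold for $\Om_h$ and $\Om$. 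These give the $h$-uniform equivalence of the $L^2$- and $H^1$-norms of a function and of its lift, together with the comparison of the corresponding $H^2$-quantities up to curvature terms.

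For part (ii) I would follow Dziuk. Writing $w$ for the pullback of $\gamma v$ from $\Ga$ to $\Ga_h$ (so that $\widetilde{I}_h w$ is the surface interpolant whose lift is $\gamma(I_h v)$), the interpolation estimate on each flat boundary triangle yields
\[
\|w-\widetilde{I}_h w\|_{L^2(\Ga_h)} + h\,\|\nb_{\Ga_h}(w-\widetilde{I}_h w)\|_{L^2(\Ga_h)} \le C h^2 \|w\|_{H^2(\Ga_h)}.
\]
The geometric estimates above then convert the left-hand side into $\|\gamma(v-I_h v)\|_{L^2(\Ga)}+h\|\nb_\Ga(v-I_h v)\|_{L^2(\Ga)}$ and bound $\|w\|_{H^2(\Ga_h)}$ by $C\|\gamma v\|_{H^2(\Ga)}$, the factors $1+O(h)$ being absorbed into $C$. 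The only point requiring attention is that $\nb_\Ga$ of a lift is not exactly the lift of $\nb_{\Ga_h}$, so the gradient comparison produces a lower-order term controlled by $\|w-\widetilde{I}_h w\|_{L^2(\Ga_h)}$, which is itself an order of $h$ smaller and hence harmless.

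For part (i) I would invoke the bulk analysis of \cite{Bernardi,ElliottRanner}. After extending $v$ to a neighbourhood of $\bar\Om$ and pulling back to $\Om_h$, the standard piecewise-linear Lagrange bound on $\Om_h$ gives the $O(h^2)$ estimate in $L^2(\Om_h)$ and the $O(h)$ estimate for the gradient; transporting through the bulk lift and using the Jacobian estimates turns these into the stated bounds on $\Om$. The genuinely delicate step, and the reason the bulk part is cited rather than proved from scratch, is the treatment of the boundary strip between $\Ga_h$ and $\Ga$: there the lift is not a plain normal projection, the elements sit against the curved boundary, and one must verify that the $O(h^2)$ width of the strip keeps the interpolation error there of the same order as in the interior. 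Once this geometric bookkeeping is granted from the references, the remainder is the routine combination of flat-element interpolation with the norm equivalences, and I expect no further difficulty.
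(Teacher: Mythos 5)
The paper does not prove this lemma at all --- it is quoted directly from the literature, with the bulk estimate attributed to \cite{Bernardi,ElliottRanner} and the surface estimate to \cite{Dziuk88}. Your outline (flat-element Lagrange interpolation on $\Om_h$ and $\Ga_h$, transported to $\Om$ and $\Ga$ via the geometric perturbation estimates for the lift, with the boundary strip handled as in Elliott--Ranner) is precisely the standard argument carried out in those references, so it is correct and takes essentially the same route.
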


\medskip
We introduce the discrete counterparts of the bilinear forms \eqref{a-heat-dynbc} and \eqref{b-heat-dynbc}, respectively:
\begin{alignat}{3}
    \label{a-heat-dynbc-nonpoly}
    a_h(u_h,v_h) =&\ \int_{\Om_h} \nb u_h \cdot \nb v_h \, \d x + \KAPPA \int_{\Ga_h} (\gamma_h u_h)(\gamma_h v_h)\, \d\sigma_h
+\BETA \int_{\Ga_h} \nb_{\Ga_h}u_h \cdot \nb_{\Ga_h}v_h\, \d\sigma_h &\ \qquad &\ (\KAPPA>0,\,\BETA\ge 0),\\
    \label{b-heat-dynbc-nonpoly}
    m_h(u_h,v_h) =&\ \int_{\Om_h} u_hv_h\, \d x + \MU  \int_{\Ga_h} (\gamma_h u_h)(\gamma_h v_h)\, \d\sigma_h &\ \qquad &\ (\MU> 0),
\end{alignat}
where $\gamma_h$ denotes the trace operator onto $\Ga_h$, and $\nb_{\Ga_h}$ is the discrete tangential gradient.

Using the lift and its properties (see Section 5 in \cite{DziukElliott_L2} and Section 4.3 in \cite{ElliottRanner} for the surface and the bulk, \resp), the following estimate of the geometric errors is obtained.
\begin{lemma}
\label{lemma-geometric-error}
    For any $v_h,w_h \in V_h$ we have the estimates
    \begin{align*}
     &   |a(v_h^l,w_h^l) - a_h(v_h,w_h)| \leq \ Ch \|\nb v_h^l\|_{L^2(B_h^l)} \, \|\nb w_h^l\|_{L^2(B_h^l)}
        \\ &\qquad + C h^2 \bigg(\|\nb v_h^l\|_{H^1(\Om)} \, \|\nb w_h^l\|_{L^2(\Om)} + \BETA \|\nb_\Ga v_h^l\|_{L^2(\Ga)} \, \|\nb_\Ga w_h^l\|_{L^2(\Ga)} + \KAPPA \|\gamma v_h^l\|_{L^2(\Ga)} \, \|\gamma w_h^l\|_{L^2(\Ga)}\bigg),
        \\
              & |m(v_h^l,w_h^l) - m_h(v_h,w_h)| \leq \ Ch \|v_h^l\|_{L^2(B_h^l)} \, \|w_h^l\|_{L^2(B_h^l)}
               \\
               &\qquad + C h^2 \Big(\|v_h^l\|_{L^2(\Om)} \, \|w_h^l\|_{L^2(\Om)} +\MU \|\gamma v_h^l\|_{L^2(\Ga)} \, \|\gamma w_h^l\|_{L^2(\Ga)}\Big),
    \end{align*}
    where $B_h^l$ denotes the layer of lifted elements which have a boundary face.
\end{lemma}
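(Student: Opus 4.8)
The plan is to bring every difference onto the exact domain $\Om$ and its boundary $\Ga$ by means of the lift, so that $a$ and $a_h$ (respectively $m$ and $m_h$) act on the \emph{same} pair $v_h^l,w_h^l$ and differ only through the geometric perturbation factors produced by the change of variables. I would then estimate these factors separately in the bulk and on the surface, exploiting the two very different scales involved: the surface measure and the tangential gradient are distorted by $O(h^2)$ uniformly on all of $\Ga$ (Dziuk), whereas the bulk Jacobian and gradient transformation are distorted only by $O(h)$, and only inside the layer $B_h$ of elements touching $\Ga_h$ (Elliott and Ranner).

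For the three surface integrals — the $\KAPPA$- and $\BETA$-terms in $a$ and the boundary term in $m$ — I would invoke the surface lift of \cite{Dziuk88}. Writing $\d\sigma_h=\delta_h\,\d\sigma$ for the quotient of surface measures and expressing $\nb_{\Ga_h} v_h$ through $\nb_\Ga v_h^l$, the classical bounds $\|\delta_h-1\|_{L^\infty(\Ga)}\le Ch^2$ and the corresponding $O(h^2)$ estimate for the distortion of the tangential gradient give at once
$$
\Big|\int_\Ga (\gamma v_h^l)(\gamma w_h^l)\,\d\sigma-\int_{\Ga_h}(\gamma_h v_h)(\gamma_h w_h)\,\d\sigma_h\Big|\le Ch^2\,\|\gamma v_h^l\|_{L^2(\Ga)}\|\gamma w_h^l\|_{L^2(\Ga)},
$$
and likewise the $\BETA$-term is controlled by $Ch^2\|\nb_\Ga v_h^l\|_{L^2(\Ga)}\|\nb_\Ga w_h^l\|_{L^2(\Ga)}$. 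Multiplying by $\KAPPA$, $\BETA$ and $\MU$ produces exactly the surface contributions to the $O(h^2)$ parts of the two asserted bounds.

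For the bulk integrals I would use the bulk lift $\Phi_h\colon\Om_h\to\Om$ of \cite{ElliottRanner}. After the substitution $x\mapsto\Phi_h(x)$, with $J_h=\det(D\Phi_h)$ and $\nb u_h=(D\Phi_h)^T\nb u_h^l$, the discrete Dirichlet integral becomes $\int_\Om \nb v_h^l\cdot A_h\,\nb w_h^l\,\d x$ with $A_h=J_h^{-1}(D\Phi_h)(D\Phi_h)^T$, and the discrete mass integral becomes $\int_\Om v_h^l w_h^l\,J_h^{-1}\,\d x$. The decisive structural fact from \cite{ElliottRanner} is that $\Phi_h$ is the identity away from the boundary layer, so that $A_h=I$ and $J_h=1$ outside $B_h^l$, while inside one has $\|A_h-I\|_{L^\infty(B_h^l)}\le Ch$ and $\|J_h^{-1}-1\|_{L^\infty(B_h^l)}\le Ch$ (the displacement is $O(h^2)$ and is spread over an element of height $O(h)$). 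Consequently the whole geometric error is supported in $B_h^l$, and
$$
\Big|\int_\Om\nb v_h^l\cdot\nb w_h^l\,\d x-\int_{\Om_h}\nb v_h\cdot\nb w_h\,\d x\Big|=\Big|\int_{B_h^l}\nb v_h^l\cdot(I-A_h)\nb w_h^l\,\d x\Big|\le Ch\,\|\nb v_h^l\|_{L^2(B_h^l)}\|\nb w_h^l\|_{L^2(B_h^l)},
$$
which is the first term of the $a$-bound; the same argument with $J_h^{-1}-1$ gives the first term of the $m$-bound. For the residual $O(h^2)$ contributions $Ch^2\|\nb v_h^l\|_{H^1(\Om)}\|\nb w_h^l\|_{L^2(\Om)}$ and $Ch^2\|v_h^l\|_{L^2(\Om)}\|w_h^l\|_{L^2(\Om)}$ I would exploit the finer, element-wise structure of the perturbation factors near $\Ga$: the leading $O(h)$ part of $I-A_h$ (respectively of $J_h^{-1}-1$) nearly cancels over each boundary element and can be put in divergence form, which upgrades the crude layer estimate to order $h^2$; in the Dirichlet case the attendant integration by parts moves one derivative onto the smooth factor and thereby produces the extra $H^1(\Om)$ norm.

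The main obstacle lies entirely in the bulk. One has to control the genuinely two-scale behaviour of the geometric perturbation, which is $O(h)$ pointwise but lives on a layer of volume $O(h)$, and, for the refined $O(h^2)$ bounds, verify that the leading part of $I-A_h$ and of $J_h^{-1}-1$ indeed has the element-wise mean-zero / divergence structure that makes the integration by parts effective — all while keeping track of the interface contributions across boundary faces and of the merely piecewise smoothness of the lifted finite element functions $v_h^l,w_h^l$. The surface estimates, by contrast, are a direct transcription of the metric and gradient estimates of \cite{Dziuk88,DziukElliott_L2}.
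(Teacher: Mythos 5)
Your overall route is the same one the paper takes: the paper gives no argument of its own here but simply states that the result ``can be shown in the same way as Lemma 6.1 in Elliott--Ranner'' and is the bulk--surface extension of Lemma 5.5 of Dziuk--Elliott, and your reconstruction --- lift both forms to $\Om$ and $\Ga$, use the $O(h^2)$ bounds on the surface measure quotient and tangential-gradient distortion for the three boundary integrals, and use the facts that the bulk transformation matrix $A_h$ and Jacobian equal the identity off the boundary layer and deviate from it by $O(h)$ inside it --- is exactly the content of those cited proofs. The surface part and the $O(h)\,\|\nb v_h^l\|_{L^2(B_h^l)}\|\nb w_h^l\|_{L^2(B_h^l)}$ layer part of your argument are correct and complete in outline.

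The one place you depart from the references is the proposed mechanism for the $O(h^2)$ bulk terms: you suggest that the leading part of $I-A_h$ has an element-wise mean-zero/divergence structure and that an integration by parts upgrades the layer estimate to $O(h^2)$, producing the $\|\nb v_h^l\|_{H^1(\Om)}$ factor. This step is both problematic and unnecessary. Problematic, because the lifted finite element gradients are only piecewise smooth with jumps across (curved) element interfaces, so ``moving a derivative onto the smooth factor'' generates interface terms you have not controlled, and you would still need to verify the claimed cancellation of $I-A_h$ over each boundary element, which does not hold in general for the piecewise-affine-to-exact element maps (pointwise one only has $\|DG_h-I\|_{L^\infty}\le Ch$ on boundary elements, with no better averaged bound available without further structure). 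Unnecessary, because the bulk geometric perturbation is supported entirely in $B_h^l$, so the $O(h)$ layer term you already established bounds the whole bulk contribution; the stated inequality is a sum of nonnegative terms, so once the bulk error is absorbed into the first term and the surface errors into the $O(h^2)$ surface terms, the additional $O(h^2)$ bulk terms on the right-hand side are pure slack and need not be ``produced'' by the proof at all. If you drop the divergence-form detour and simply observe this, your argument closes and coincides with the proof the paper points to.
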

This result can be shown in the same way as Lemma 6.1 in \cite{ElliottRanner}.  Lemma~\ref{lemma-geometric-error} is the extension of Lemma 5.5 in \cite{DziukElliott_L2} to the surface-bulk case. As a consequence of this lemma we also have the $h$-uniform equivalence of the norms $\|v_h^l\| \sim \|v_h\|_h$ and $|v_h^l | \sim |v_h |_h$.

The following  lemma provides a key estimate.
\begin{lemma}[\cite{ElliottRanner}, Lemma 6.3]
\label{lemma-layer-estimate}
    For all $v\in H^1(\Om)$ the following estimate holds:
    \begin{equation}
	\|v\|_{L^2(B_h^l)} \leq C h^\Half \|v\|_{H^1(\Om)}.
    \end{equation}
\end{lemma}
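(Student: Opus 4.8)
The plan is to realise $B_h^l$ as a subset of a thin boundary shell of width proportional to $h$, and then to prove a general strip estimate for $H^1(\Om)$ functions over such a shell. The point is that the only special feature of $B_h^l$ we use is that it is a layer of thickness $O(h)$ hugging the smooth boundary $\Ga$.

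First I would control the geometry. Since the vertices of $\Ga_h$ lie on $\Ga$ and the triangulation $\calT_h$ is quasi-uniform with meshwidth $h$, every element carrying a boundary face has diameter $O(h)$ and a face close to $\Ga$, hence is contained in the tubular neighbourhood $\{x:\mathrm{dist}(x,\Ga)\le c_0 h\}$; the lift displaces points only by $O(h^2)$ in the normal direction, so the lifted layer satisfies $B_h^l\subset S_\delta:=\{x\in\Om: d(x)<\delta\}$ with $\delta=ch$, where $d$ denotes the distance to $\Ga$. Here I use that $\Ga$ is smooth, so that for $h$ small enough the normal map $p\mapsto p-r\nu(p)$, with $p\in\Ga$, $0\le r<\delta$, is a diffeomorphism onto $S_\delta$ whose Jacobian is bounded above and below independently of $h$.

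Next I would establish the strip estimate $\|v\|_{L^2(S_\delta)}^2\le C\delta\,\|v\|_{H^1(\Om)}^2$ for all $v\in H^1(\Om)$ and $\delta\le\delta_0$. By density it suffices to treat smooth $v$. In the normal coordinates above I write, for fixed $p\in\Ga$, $v(p-r\nu)=\gamma v(p)-\int_0^r \pa_\nu v(p-t\nu)\,\d t$, square, and apply Cauchy--Schwarz to get $|v(p-r\nu)|^2\le 2|\gamma v(p)|^2+2r\int_0^r |\nb v|^2\,\d t$. Integrating in $r$ over $(0,\delta)$ and then over $\Ga$ against the bounded Jacobian yields $\|v\|_{L^2(S_\delta)}^2\le C\bigl(\delta\,\|\gamma v\|_{L^2(\Ga)}^2+\delta^2\,\|\nb v\|_{L^2(\Om)}^2\bigr)$. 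Bounding the trace by $\|\gamma v\|_{L^2(\Ga)}\le C\|v\|_{H^1(\Om)}$ and using $\delta^2\le\delta$ gives the claimed strip bound. Taking $\delta=ch$ and using $B_h^l\subset S_{ch}$ from the first step then produces $\|v\|_{L^2(B_h^l)}\le\|v\|_{L^2(S_{ch})}\le C h^\Half\,\|v\|_{H^1(\Om)}$, which is the assertion.

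I expect the main obstacle to be the geometric bookkeeping of the first step: verifying, uniformly in $h$, that the lifted boundary layer is genuinely contained in a shell of width $O(h)$ and that the normal-coordinate change of variables has a Jacobian bounded away from $0$ and $\infty$ for $h$ small. Once these normal coordinates are in place, the strip estimate itself is an elementary one-dimensional fundamental-theorem-of-calculus computation combined with the trace inequality.
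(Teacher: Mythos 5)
Your argument is correct and is essentially the standard one: the paper itself offers no proof but cites Elliott \& Ranner (Lemma~6.3), where the estimate is obtained in the same way, by enclosing $B_h^l$ in a tubular neighbourhood of $\Ga$ of width $O(h)$ and proving a narrow-band trace inequality there (via uniform trace bounds on the parallel surfaces, which is just a repackaging of your fundamental-theorem-of-calculus computation in normal coordinates). Your geometric bookkeeping (vertices of $\Ga_h$ on $\Ga$, quasi-uniformity, $O(h^2)$ normal displacement under the lift, and smallness of $h$ relative to the reach of $\Ga$) is exactly what is needed and is available from the setting of Section~4.1.
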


\subsection{The Ritz map for problems with time-independent coefficients}
The semi-discretisation of the parabolic problem with dynamic boundary conditions \eqref{heat-dynbc}, with a non-polygonal domain $\Om$, determines $u_h:[0,T]\to V_h$ such that
\begin{equation}
\label{heat-dynbc-semi-discrete-nonpoly}
    \begin{alignedat}{1}
        m_h(\dot u_h(t),v_h) + a_h(u_h(t),v_h) &= m_h(f(t),v_h) \qquad \forall \,v_h\in V_h \qquad (0<t\le T)\\
        m_h(u_h(0),v_h) &= m_h(u_{0} ,v_h) \qquad\ \ \, \forall \,v_h\in V_h .
    \end{alignedat}
\end{equation}
The Ritz map $R_h:V\to V_h^l$ is defined by first determining $\widetilde R_h u\in V_h$ for $u\in V$ via
\begin{equation}\label{ritz-nonpoly}
a_h(\widetilde{R}_h u, v_h) = a(u,v_h^l) \qquad \forall v_h \in V_h
\end{equation}
and then setting  $R_h u := (\widetilde{R}_h u)^l \in V_h^l$.

The Galerkin orthogonality does not hold in this situation, just up to a small defect. The following estimate is obtained from Lemma \ref{lemma-geometric-error}:
\begin{align}
    a(u-R_h u, v_h^l) =&\ a(u,v_h^l) - a(R_h u,v_h^l) = a_h(\widetilde{R}_h u,v_h) - a((\widetilde R_h u)^l,v_h^l) \nonumber \\
    \leq &\ Ch \|\nb R_h u\|_{L^2(B_h^l)} \, \|\nb v_h^l\|_{L^2(B_h^l)} + C h^2 \bigg(\|\nb R_h u\|_{H^1(\Om)} \, \|\nb v_h^l\|_{L^2(\Om)} \nonumber \\
    \label{Galerkin-orthog-defect}
    &\ \qquad \qquad + \BETA \|\nb_\Ga R_h u\|_{L^2(\Ga)} \, \|\nb_\Ga v_h^l\|_{L^2(\Ga)} + \KAPPA \|\gamma R_h u\|_{L^2(\Ga)} \, \|\gamma v_h^l\|_{L^2(\Ga)}\bigg).
\end{align}
We prove the following error estimate for the  Ritz map. We again set $\|v\|^2 = a(v,v)$.
\begin{lemma}
\label{lemma-Ritz-error-nonpoly}
    The error of the Ritz map for the elliptic bilinear form \eqref{a-heat-dynbc} on a smooth domain satisfies the first-order error bound in the energy norm
    \begin{equation*}
	\|u - R_h u\|^2 \leq Ch^2 \bigg(\|u\|_{H^2(\Om)}^2 + \BETA \|\gamma u\|_{H^2(\Ga)}^2\bigg),
    \end{equation*}
    where the constant $C$ is independent of $h\le h_0$ ($h_0$ sufficiently small) and $u\in H^2(\Om)$ with $\sqrt{\BETA}\gamma u \in H^2(\Ga)$.
\end{lemma}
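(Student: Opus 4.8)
The plan is to carry over the polygonal argument of Lemma~\ref{lemma-Ritz-error}, paying for the approximation of $\Om$ by $\Om_h$ through the geometric-defect estimate \eqref{Galerkin-orthog-defect} and a boundary-layer contribution controlled by Lemma~\ref{lemma-layer-estimate}. First I split
$$
u - R_h u = (\ui) + (I_h u - R_h u),
$$
where $I_h u\in V_h^l$ is the lifted interpolant of Lemma~\ref{lemma-nonpoly-interpolation}. The interpolation term is bounded in the energy norm directly by that lemma: its bulk and surface estimates, together with the trace inequality used to absorb the $\KAPPA$-boundary term into $\|u\|_{H^2(\Om)}$ exactly as in Lemma~\ref{lemma-Ritz-error}, give $\|\ui\|^2\le Ch^2(\|u\|_{H^2(\Om)}^2+\BETA\|\gamma u\|_{H^2(\Ga)}^2)$. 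It therefore remains to bound $\eta_h:=I_h u-R_h u\in V_h^l$.

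For this I use coercivity. Writing $\eta_h=(\ur)-(\ui)$ and testing with $\eta_h$ gives
$$
\|\eta_h\|^2 = a(\ur,\eta_h) - a(\ui,\eta_h).
$$
The second term is $\le\|\ui\|\,\|\eta_h\|$ by Cauchy--Schwarz in the energy norm, hence already $O(h)\,\|\eta_h\|$. In the first term the non-polygonal domain enters: since $\eta_h=v_h^l$ with $v_h=\widetilde I_h u-\widetilde R_h u\in V_h$, the defect of Galerkin orthogonality \eqref{Galerkin-orthog-defect} applies with $v_h^l=\eta_h$ and bounds $a(\ur,\eta_h)$ by the boundary-layer term $Ch\,\|\nb R_h u\|_{L^2(B_h^l)}\,\|\nb\eta_h\|_{L^2(B_h^l)}$ plus the $O(h^2)$ terms containing $\|\nb R_h u\|_{H^1(\Om)}$, $\BETA\|\nb_\Ga R_h u\|_{L^2(\Ga)}$ and $\KAPPA\|\gamma R_h u\|_{L^2(\Ga)}$, each paired with a matching norm of $\eta_h$.

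I expect the $O(h)$ boundary-layer term to be the main obstacle, since it is the only genuinely new feature compared with the polygonal case. I bound $\|\nb\eta_h\|_{L^2(B_h^l)}\le\|\eta_h\|$ and split the remaining factor by the triangle inequality, $\|\nb R_h u\|_{L^2(B_h^l)}\le\|\nb u\|_{L^2(B_h^l)}+\|\nb(\ur)\|_{L^2(B_h^l)}$. Lemma~\ref{lemma-layer-estimate}, applied componentwise to the $H^1$-function $\nb u$, yields $\|\nb u\|_{L^2(B_h^l)}\le Ch^{1/2}\|u\|_{H^2(\Om)}$, while the second part is $\le\|\ur\|\le\|\ui\|+\|\eta_h\|$. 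Thus the layer term is bounded by $Ch\bigl(h^{1/2}\|u\|_{H^2(\Om)}+\|\ui\|+\|\eta_h\|\bigr)\|\eta_h\|$, whose $Ch\,\|\eta_h\|^2$ contribution is absorbed into the left-hand side for $h\le h_0$ small enough.

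The remaining $O(h^2)$ terms are harmless: by the stability bound $\|R_h u\|\le C\|u\|$ --- immediate from the definition \eqref{ritz-nonpoly} and the norm equivalence $\|v_h^l\|\sim\|v_h\|_h$ recorded after Lemma~\ref{lemma-geometric-error} --- together with an inverse estimate for the $\|\nb R_h u\|_{H^1(\Om)}$ factor, they are controlled by $Ch\,\|u\|\,\|\eta_h\|$. Collecting everything, dividing by $\|\eta_h\|$, and using $\|u\|\le C(\|u\|_{H^2(\Om)}^2+\BETA\|\gamma u\|_{H^2(\Ga)}^2)^{1/2}$ gives $\|\eta_h\|\le Ch(\|u\|_{H^2(\Om)}^2+\BETA\|\gamma u\|_{H^2(\Ga)}^2)^{1/2}$. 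Inserting this and the interpolation bound into the triangle inequality yields the claimed first-order energy estimate.
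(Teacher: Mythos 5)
Your proposal is correct and follows essentially the same route as the paper: both rest on the interpolation estimates of Lemma~\ref{lemma-nonpoly-interpolation}, the Galerkin-orthogonality defect \eqref{Galerkin-orthog-defect}, and the boundary-layer bound of Lemma~\ref{lemma-layer-estimate}, with the $O(h)$ layer term absorbed for $h\le h_0$. The only (inessential) difference is bookkeeping: you isolate $\eta_h=I_hu-R_hu$ and finish with a triangle inequality, while the paper expands $\|u-R_hu\|^2=a(u-R_hu,u-I_hu)+a(u-R_hu,I_hu-R_hu)$ and absorbs $\|u-R_hu\|^2$ directly.
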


\begin{proof}
Using the bound \eqref{Galerkin-orthog-defect}, we start by estimating as
\begin{align*}
    \|u - R_h u\|^2 =&\ a(u - R_h u , u - I_h u) + a(u - R_h u , I_h u - R_h u) \\
    \leq &\ \|u - R_h u\| \, \|u - I_h u\| \\
    &\ + Ch \|\nb R_h u\|_{L^2(B_h^l)} \, \|\nb (I_h u - R_h u)\|_{L^2(B_h^l)} + Ch^2 \|R_h u\| \ \|I_h u - R_h u\|.
\end{align*}
These three terms need to be estimated in a suitable and careful way.
We use the interpolation estimates from Lemma \ref{lemma-nonpoly-interpolation} to obtain for the first term
\begin{align*}
    \|u - R_h u\| \, \|u - I_h u\| \leq &\ \Half \|u - R_h u\|^2 + Ch^2 \bigg(\|u\|_{H^2(\Om)}^2 + \BETA \|\gamma u\|_{H^2(\Ga)}^2 \bigg).
\end{align*}
For the second term we have
\begin{align*}
    &\ Ch \|\nb R_h u\|_{L^2(B_h^l)} \, \|\nb (I_h u - R_h u)\|_{L^2(B_h^l)} \\
    &\leq \ Ch \Big(\|\nb (u - R_h u)\|_{L^2(B_h^l)} + \|\nb u\|_{L^2(B_h^l)}\Big) \ \Big(\|\nb (u - I_h u)\|_{L^2(B_h^l)} + \|\nb (u - R_h u)\|_{L^2(B_h^l)}\Big) \\
    &\leq \ Ch \Big(2\|\nb (u - R_h u)\|_{L^2(B_h^l)}^2 + \|\nb u\|_{L^2(B_h^l)}^2 + \|\nb (u - I_h u)\|_{L^2(B_h^l)}^2\Big) \\
    &\leq \ Ch \Big(2\|u - R_h u\|^2 + Ch \|u\|_{H^2(\Om)}^2 + Ch^2 \|u\|_{H^2(\Om)}^2\Big),
\end{align*}
where in the last estimate we used Lemma \ref{lemma-layer-estimate} and the interpolation estimate.

The last term is estimated, using Young's inequality and the interpolation results of Lemma \ref{lemma-nonpoly-interpolation}, as
\begin{align*}
    &\ Ch^2 \|R_h u\| \, \|I_h u - R_h u\| \\
    &\leq\ Ch^2 \Big(\|u - R_h u\| + \|u\|\Big) \Big(\|u - I_h u\| + \|u - R_h u\|\Big) \\
    &\leq \ Ch^2 \Big(2\|u - R_h u\|^2 + \|u\|^2 + \|u - I_h u\|^2\Big) \\
    &\leq \ Ch^2 \Big(2\|u - R_h u\|^2 + C (\|u\|_{H^1(\Om)}^2 + \BETA \|\gamma u\|_{H^1(\Ga)}^2) + Ch^2\big(\|u\|_{H^2(\Om)}^2 + \BETA \|\gamma u\|_{H^2(\Ga)}^2\big) \Big).
\end{align*}
Reinserting all these into the estimate above, absorbing the terms $\|u - R_h u\|^2$ using an $h\leq h_0$ with a sufficiently small $h_0$, we obtain the stated result.
\end{proof}

The second-order error bounds of the Ritz map can be shown analogously as in Section \ref{sect:spacediscretisation}. Note that Conditions \ref{condition-regularity-0} and \ref{condition-regularity-beta} are satisfied for smooth domains $\Om$.
\begin{lemma}
    The second-order estimates of Lemmas \ref{lemma-Ritz-error-0} and \ref{lemma-Ritz-error-beta} hold for smooth domains, for $h\leq h_0$ with a sufficiently small $h_0$.
\end{lemma}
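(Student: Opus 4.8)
The plan is to repeat the Aubin--Nitsche duality argument of Lemmas~\ref{lemma-Ritz-error-0} and~\ref{lemma-Ritz-error-beta}, the essential new feature being that the exact Galerkin orthogonality used there is no longer available on a smooth domain and must be replaced by the geometric-error estimate~\eqref{Galerkin-orthog-defect}. Writing $g=u-R_hu$, I would take the same dual problems as in the polygonal case: for $\BETA=0$ the Robin problem~\eqref{eq-elliptic-problem} with $g_\Om=g$ and $g_\Ga=\MU(-\laplace_\Ga+I)^{-1/2}\gamma g$, and for $\BETA>0$ the problem~\eqref{eq-elliptic-problem-beta} with $g_\Om=g$ and $g_\Ga=\gamma g$. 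Since $\Om$ is smooth, Conditions~\ref{condition-regularity-0} and~\ref{condition-regularity-beta} hold, so the dual solution $\phi$ lies in $H^2(\Om)$ (with trace in $H^2(\Ga)$ when $\BETA>0$) and is bounded by the relevant dual norm of $g$, namely $(\|g\|_{L_2(\Om)}^2+\MU\|\gamma g\|_{H^{-1/2}(\Ga)}^2)^{1/2}$ for $\BETA=0$ and $(\|g\|_{L_2(\Om)}^2+\MU\|\gamma g\|_{L_2(\Ga)}^2)^{1/2}$ for $\BETA>0$.

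Testing the weak form of the dual problem with $v=g$ reproduces the target norm on the left-hand side and $a(\phi,g)$ on the right. The key departure from the polygonal proof is that I would not collapse this to a single term but split it as $a(\phi,g)=a(\phi-R_h\phi,g)+a(u-R_hu,R_h\phi)$. The first summand is treated exactly as before: Cauchy--Schwarz together with the \emph{first-order} Ritz bound of Lemma~\ref{lemma-Ritz-error-nonpoly}, applied to both $\phi$ and $u$, gives $a(\phi-R_h\phi,g)\le\|\phi-R_h\phi\|\,\|g\|\le Ch^2\,\|\phi\|_{H^2(\Om)}\,\|u\|_{H^2(\Om)}$ (with the surface $H^2(\Ga)$ contributions added when $\BETA>0$). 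The second summand is precisely a quantity $a(u-R_hu,v_h^l)$ with $v_h=\widetilde R_h\phi$, hence is controlled by the Galerkin-orthogonality defect~\eqref{Galerkin-orthog-defect}.

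The main obstacle is the leading, only $O(h)$, term of~\eqref{Galerkin-orthog-defect}, namely $Ch\,\|\nb R_hu\|_{L^2(B_h^l)}\,\|\nb R_h\phi\|_{L^2(B_h^l)}$, which at face value threatens the second order. The point is to extract a half power of $h$ from each boundary-layer factor using Lemma~\ref{lemma-layer-estimate}. Since a discrete function has no $H^1$-regular gradient, I would not apply that lemma to $\nb R_hu$ directly but split $\nb R_hu=\nb u-\nb(u-R_hu)$: the layer estimate applied componentwise to $\nb u\in H^1(\Om)$ gives $\|\nb u\|_{L^2(B_h^l)}\le Ch^{1/2}\|u\|_{H^2(\Om)}$, while $\|\nb(u-R_hu)\|_{L^2(B_h^l)}\le\|u-R_hu\|\le Ch\|u\|_{H^2(\Om)}$ by Lemma~\ref{lemma-Ritz-error-nonpoly}, so that $\|\nb R_hu\|_{L^2(B_h^l)}\le Ch^{1/2}\|u\|_{H^2(\Om)}$, and likewise for $\phi$. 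Feeding these two half-powers into the prefactor $h$ yields a contribution of size $Ch^2\|u\|_{H^2(\Om)}\|\phi\|_{H^2(\Om)}$. The remaining, explicitly $O(h^2)$, terms of~\eqref{Galerkin-orthog-defect} are bounded by the $h$-uniform stability of the Ritz map in the energy norm, $\|R_hu\|\le C\|u\|$ and $\|R_h\phi\|\le C\|\phi\|$, together with the $H^2$ bounds for $u$ and $\phi$. Collecting everything gives $a(\phi,g)\le Ch^2\,\|u\|_{H^2(\Om)}\,\|\phi\|_{H^2(\Om)}$ (with the obvious surface additions when $\BETA>0$); invoking Condition~\ref{condition-regularity-0} (resp.\ Condition~\ref{condition-regularity-beta}) to estimate the $H^2$ norm of $\phi$ by the dual norm of $g$ and dividing through then yields the two second-order bounds, in the $L_2(\Om)\oplus H^{-1/2}(\Ga)$ norm for $\BETA=0$ and the $L_2(\Om)\oplus L_2(\Ga)$ norm for $\BETA>0$.
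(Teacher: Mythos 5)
Your proposal is correct and follows exactly the route the paper intends (it gives no written proof here, only the remark that the argument is ``analogous'' to the polygonal case): the Aubin--Nitsche duality argument of Lemmas~\ref{lemma-Ritz-error-0} and~\ref{lemma-Ritz-error-beta}, with the exact Galerkin orthogonality replaced by the defect estimate~\eqref{Galerkin-orthog-defect}. The one place where the analogy needs genuine work --- extracting a factor $h^{1/2}$ from each boundary-layer gradient via Lemma~\ref{lemma-layer-estimate} applied to $\nabla u$ and $\nabla\phi$ after splitting off $\nabla(u-R_hu)$, so that the $O(h)$ geometric term becomes $O(h^2)$ --- is handled correctly and mirrors the treatment in the proof of Lemma~\ref{lemma-Ritz-error-nonpoly}.
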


\subsection{The Ritz map for problems with time-varying coefficients}

In the time-varying non-polygonal case the Ritz map $R_h(t) : V \rightarrow V_h^l$ with respect to the bilinear form \eqref{a-heat-dynbc-t}, for $0\leq t \leq T$, is defined via $\widetilde{R}_h(t) u \in V_h$ defined by
\begin{equation}
\label{eq-Ritz-nonpoly-t}
    a_h(t;\widetilde{R}_h(t) u, v_h) = a(t;u,v_h^l) \qquad \forall v_h \in V_h,
\end{equation}
and setting $R_h(t) u := (\widetilde{R}_h(t) u)^l \in V_h^l$.

\begin{lemma}
    The error bounds for the Ritz map and its time derivative given in Lemmas \ref{lemma-Ritz-t} and \ref{lemma-Ritz-dot} also hold in the case of non-autonomous problems and a smooth domain $\Om$, for $h\leq h_0$ with a sufficiently small $h_0$.
\end{lemma}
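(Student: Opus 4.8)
The plan is to combine two ingredients that are already available: the smooth-domain Ritz estimate of Lemma~\ref{lemma-Ritz-error-nonpoly} and the time-differentiation technique from the polygonal non-autonomous case (Lemmas~\ref{lemma-Ritz-t} and~\ref{lemma-Ritz-dot}), while carefully tracking that every constant is uniform in $t$. The uniformity is guaranteed by the standing hypotheses on \eqref{a-heat-dynbc-t}: the coefficients $\KAPPA(\cdot,t)$ and $\BETA(\cdot,t)$ are bounded by $\bar\KAPPA$ and $\bar\BETA$ and are continuously differentiable in $t$ with values in $L_\infty$, and $\MU(\cdot,t)$ has a uniform positive lower bound. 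Consequently the geometric error estimates of Lemma~\ref{lemma-geometric-error}, the induced-norm equivalence $\|v_h^l\|_t\sim a_h(t;v_h,v_h)^{1/2}$, and the Galerkin-orthogonality defect \eqref{Galerkin-orthog-defect} all hold with $t$-independent constants, while the interpolation and layer estimates of Lemmas~\ref{lemma-nonpoly-interpolation} and~\ref{lemma-layer-estimate} are purely geometric and thus automatically $t$-independent.

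For the analogue of Lemma~\ref{lemma-Ritz-t}, I would rerun the proof of Lemma~\ref{lemma-Ritz-error-nonpoly} verbatim with $a(t;\cdot,\cdot)$ and $a_h(t;\cdot,\cdot)$ in place of $a$ and $a_h$. Writing $\|u-R_h(t)u\|_t^2 = a(t;u-R_h u,u-I_h u) + a(t;u-R_h u,I_h u-R_h u)$, the first term is handled by Young's inequality and Lemma~\ref{lemma-nonpoly-interpolation}, while the second is controlled by the $t$-uniform defect bound \eqref{Galerkin-orthog-defect} together with the layer estimate of Lemma~\ref{lemma-layer-estimate}; absorbing the arising $\|u-R_h u\|_t^2$ contributions for $h\le h_0$ yields the claimed first-order bound with a constant independent of $h$ and $t$.

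The analogue of Lemma~\ref{lemma-Ritz-dot} is the substantial step. I would start from the split (abbreviating $R_h u = R_h(t)u(t)$)
$$
\|\dot u-\tfrac d{dt}(R_h u)\|_t \le \|\dot u-R_h\dot u\|_t + \|R_h\dot u-\tfrac d{dt}(R_h u)\|_t ,
$$
whose first summand is $O(h)$ by the first part applied to $\dot u$. For the second summand, note that the lift is $t$-independent, so $\tfrac d{dt}(R_h(t)u(t)) = (\tfrac d{dt}\widetilde R_h(t)u(t))^l$, and set $w_h := \tfrac d{dt}\widetilde R_h(t)u(t)-\widetilde R_h(t)\dot u(t)\in V_h$. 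Differentiating the defining relation \eqref{eq-Ritz-nonpoly-t} in $t$ with $v_h$ (hence $v_h^l$) fixed, and subtracting the defining relation for $\widetilde R_h(t)\dot u(t)$, gives the discrete equation
$$
a_h(t;w_h,v_h) = \partial_t a(t;u,v_h^l) - \partial_t a_h(t;\widetilde R_h u,v_h) \qquad\forall\,v_h\in V_h .
$$
Testing with $v_h=w_h$, using that $a_h(t;\cdot,\cdot)$ is an inner product with $\|w_h^l\|_t\sim a_h(t;w_h,w_h)^{1/2}$ uniformly in $h$ and $t$, and splitting the right-hand side as $\partial_t a(t;u-R_h u,w_h^l)$ plus the geometric defect $\partial_t a(t;R_h u,w_h^l)-\partial_t a_h(t;\widetilde R_h u,w_h)$, the first piece is $\le M_1'\,\|u-R_h u\|_t\,\|w_h^l\|_t = O(h)\,\|w_h^l\|_t$ by \eqref{a-dot-bound} (now with $\partial_t\KAPPA,\partial_t\BETA$) and the first part. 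Dividing through then gives $\|w_h^l\|_t=O(h)$ and, reinserted into the split, the stated bound.

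The main obstacle is the geometric-defect term for the time-differentiated forms: one needs the analogue of Lemma~\ref{lemma-geometric-error} with $\partial_t a$ and $\partial_t a_h$ in place of $a$ and $a_h$. Since $\partial_t$ annihilates the $t$-independent bulk Dirichlet term, $\partial_t a(t;\cdot,\cdot)$ consists only of the surface contributions $\int_\Ga \partial_t\KAPPA\,(\gamma\cdot)(\gamma\cdot)$ and $\int_\Ga\partial_t\BETA\,\nb_\Ga\cdot\,\nb_\Ga\cdot$. The boundedness of $\partial_t\KAPPA,\partial_t\BETA$ in $L_\infty$ lets the surface part of the proof of Lemma~\ref{lemma-geometric-error} (equivalently Lemma~6.1 in~\cite{ElliottRanner} and Lemma~5.5 in~\cite{DziukElliott_L2}) go through unchanged, and because no bulk gradient is present the defect is of the better order $Ch^2\|R_h u\|_t\,\|w_h^l\|_t$. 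Bounding $\|R_h u\|_t\le\|u\|_t+\|u-R_h u\|_t\le C(\|u\|_{H^2(\Om)}^2+\bar\BETA\|\gamma u\|_{H^2(\Ga)}^2)^{1/2}$ then absorbs this term into the $O(h)$ estimate. Once this geometric lemma is in place, the remaining work is the bookkeeping described above, so the whole argument reduces to the two parts.
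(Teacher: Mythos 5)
Your proposal is correct and follows essentially the same route as the paper: rerun the smooth-domain Ritz estimate with the time-dependent forms using the $t$-uniformity of the geometric error constants, then differentiate the defining relation \eqref{eq-Ritz-nonpoly-t} and exploit that $\partial_t a$ contains only boundary terms so its geometric defect is $O(h^2)$. The only difference is organizational — you isolate $w_h=\tfrac d{dt}(\widetilde R_h u)-\widetilde R_h\dot u$ and test its own equation with $w_h$, whereas the paper expands $\|\dot u-\tfrac d{dt}(R_hu)\|_t^2$ directly into four terms and absorbs — but the key estimates invoked are identical.
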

\begin{proof}
In order to prove the error estimates of the Ritz map in the non-polygonal time-varying case we have to make sure that the constants in the estimate of the geometric error $a_h(t;u_h,v_h) - a(t;u_h^l,v_h^l)$ (from Lemma \ref{lemma-geometric-error}) are independent of $t$. This holds because of the smoothness of the domain $\Om$, and since we assumed uniform bounds on the coefficient functions.
Therefore the first order error bound is obtained with the proof of Lemma~\ref{lemma-Ritz-error-nonpoly}.

The error of the time derivative of the Ritz map is shown similarly as in the proof of Lemma \ref{lemma-Ritz-dot}. We start by differentiating the definition of the Ritz map \eqref{eq-Ritz-nonpoly-t}, and obtain
\begin{align*}
    a\Big(t; \dot u\t , v_h^l\Big) - a_h\Big(t; \diff \big(\widetilde{R}_h\t u\t\big) , v_h\Big) = -\bigg( \pa_t a_h\Big(t; \widetilde{R}_h\t u\t , v_h\Big) - \pa_t a\Big(t; u\t , v_h^l\Big) \bigg).
\end{align*}
Using  $\big(\diff v_h\big)^l = \diff v_h^l$,  we estimate
\begin{align*}
    \Big\|\dot u - \diff (R_h u)\Big\|_t^2 = &\ a\Big(t; \dot u - \diff (R_h u) , \dot u - R_h \dot u \Big) + a\Big(t; \dot u - \diff (R_h u) , R_h \dot u - \diff (R_h u) \Big) \\
    \leq &\  a\Big(t; \dot u - \diff (R_h u) , \dot u - R_h \dot u\Big) - \pa_t a\Big(t; u - R_h u , R_h \dot u - \diff (R_h u) \Big) \\
    &\ + \bigg| a_h\Big(t; \diff \big(\widetilde{R}_h u\big) , \widetilde{R}_h \dot u - \diff (\widetilde{R}_h u) \Big) - a\Big(t; \diff \big(R_h u\big) , R_h \dot u - \diff (R_h u) \Big) \bigg| \\
    &\ + \bigg| \pa_t a_h\Big(t; \widetilde{R}_h u , \widetilde{R}_h \dot u - \diff (\widetilde{R}_h u) \Big) - \pa_t a\Big(t; R_h u , R_h \dot u - \diff (R_h u) \Big) \bigg|.
\end{align*}
The first two terms are estimated analogously in the proof of Lemma \ref{lemma-Ritz-dot}. The other two terms are estimated separately, like in the proof of Lemma \ref{lemma-Ritz-error-nonpoly}, but here  we use the error estimates for the Ritz map instead of the interpolation estimates.
For the third term we have by the time-varying version of Lemma~\ref{lemma-geometric-error}
\begin{align*}
     &\ \bigg| a_h\Big(t; \diff \big(\widetilde{R}_h u\big) , \widetilde{R}_h \dot u - \diff (\widetilde{R}_h u) \Big) - a\Big(t; \diff \big(R_h u\big) , R_h \dot u - \diff (R_h u) \Big) \bigg| \\
     &\leq \ Ch \Big( 2 \big\| \dot u - \diff (R_h u) \big\|_t^2 + Ch \|\dot u\|_{H^2(\Om)}^2 + Ch^2  \|\dot u\|_{H^2(\Om)}^2 \Big) \\
     &\ \ + Ch^2 \Big( 2 \big\| \dot u - \diff (R_h u) \big\|_t^2 + C \big( \|\dot u\|_{H^2(\Om)}^2 +  \|\sqrt{\BETA} \gamma \dot u\|_{H^2(\Ga)}^2 \big) + Ch^2 \big( \|\dot u\|_{H^2(\Om)}^2 +  \|\sqrt{\BETA} \gamma \dot u\|_{H^2(\Ga)}^2 \big) \Big).
\end{align*}
For the fourth term we note
\begin{equation*}
    \pa_t a(t;v,w) = \int_\Ga \dot \BETA \,\nb_\Ga v \cdot \nb_\Ga w \d \sigma + \int_\Ga \dot \KAPPA \,(\gamma v) (\gamma w) \d \sigma,
\end{equation*}
which only contains boundary terms with bounded coefficient functions. As in Lemma \ref{lemma-geometric-error} we obtain the bound
\begin{equation*}
    |\pa_t a(t;v_h^l,w_h^l) - \pa_t a_h(t;v_h,w_h)| \leq  C h^2 \bigg(  \|\sqrt{\BETA}\nb_\Ga v_h^l\|_{L^2(\Ga)} \, \|\sqrt{\BETA}\nb_\Ga w_h^l\|_{L^2(\Ga)} +  \|\sqrt{\KAPPA}\gamma v_h^l\|_{L^2(\Ga)} \, \|\sqrt{\KAPPA}\gamma w_h^l\|_{L^2(\Ga)}\bigg).
\end{equation*}
Therefore we have for the last term, similarly as for the previous one,
\begin{align*}
    &\ \bigg| \pa_t a_h\Big(t; \widetilde{R}_h u , \widetilde{R}_h \dot u - \diff (\widetilde{R}_h u) \Big) - \pa_t a\Big(t; R_h u , R_h \dot u - \diff (R_h u) \Big) \bigg| \\
    \leq &\ Ch^2 \big\| R_h u \big\|_t \ \big\| R_h \dot u - \diff (R_h u) \big\|_t \\
    \leq &\ Ch^2 \Big( \big\| \dot u - \diff (R_h u) \big\|_t^2 + C \big( \|\dot u\|_{H^2(\Om)}^2 +  \|\sqrt{\BETA} \gamma \dot u\|_{H^2(\Ga)}^2 \big) + Ch^2 \big( \|\dot u\|_{H^2(\Om)}^2 +  \|\sqrt{\BETA} \gamma \dot u\|_{H^2(\Ga)}^2 \big) \Big).
\end{align*}
Absorbing the terms $Ch  \big\| \dot u - \diff (R_h u) \big\|_t^2$ with  a sufficiently small $h$ in the left-hand term, the result follows.
\end{proof}

\subsection{Error bounds of the semi-discretisation}

With the obtained error bounds for the Ritz map and the geometric error bounds, all the proofs of error bounds given in Section~\ref{sect:spacediscretisation} now extend directly to the case of smooth domains. We summarize this in the following theorem.

\begin{theorem}
    The semi-discrete error bounds of Theorems \ref{theorem-spatial-error-est-linear} --  \ref{theorem-spatial-error-est-semi-linear } also hold for smooth domains, for $h\leq h_0$ with a sufficiently small $h_0$.
\end{theorem}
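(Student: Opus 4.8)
The plan is to re-run each proof of Section~\ref{sect:spacediscretisation} essentially verbatim, with two substitutions: the exact Ritz projection is replaced by the lifted Ritz map $R_h=(\widetilde R_h\,\cdot\,)^l$ of \eqref{ritz-nonpoly}/\eqref{eq-Ritz-nonpoly-t}, and the forms $a,m$ are replaced by their discrete counterparts $a_h,m_h$ wherever the discrete solution $u_h\in V_h$ is tested. All Ritz-map bounds (first order, the second-order negative-norm bounds of Lemmas~\ref{lemma-Ritz-error-0}--\ref{lemma-Ritz-error-beta}, and the time-derivative bound of Lemma~\ref{lemma-Ritz-dot}) have already been transferred to the smooth-domain setting in the preceding lemmas, and the geometric mismatch of $a,m$ versus $a_h,m_h$ is quantified by Lemma~\ref{lemma-geometric-error}. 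The only genuinely new feature relative to the polygonal case is that the error equation for $e_h=u_h-\widetilde R_h u$ acquires, on top of the Ritz-map defect already present there, additional \emph{geometric consistency defects}; the whole task is to show that these are $O(h^2)$ in the relevant dual norms, which is exactly the order already appearing on the right-hand side of the polygonal energy estimates.

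Concretely, testing \eqref{heat-dynbc-semi-discrete-nonpoly} with $v_h\in V_h$, inserting the Ritz identity $a_h(\widetilde R_h u,v_h)=a(u,v_h^l)$ and the lifted weak equation $m(\dot u,v_h^l)+a(u,v_h^l)=m(f,v_h^l)$, the constant-coefficient error equation becomes
\begin{equation*}
m_h(\dot e_h,v_h)+a_h(e_h,v_h)=\bigl[m_h(f,v_h)-m(f,v_h^l)\bigr]+m(\dot u-R_h\dot u,v_h^l)+\bigl[m(R_h\dot u,v_h^l)-m_h(\widetilde R_h\dot u,v_h)\bigr],
\end{equation*}
using $\tfrac{d}{dt}\widetilde R_h u=\widetilde R_h\dot u$ in the autonomous case and Lemma~\ref{lemma-Ritz-dot} otherwise. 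The middle term is a mass-form Ritz error, controlled by the Ritz estimates (in $L_2$ for the first-order bound, in the $H^{-1/2}$/$L_2$ norms of Lemmas~\ref{lemma-Ritz-error-0}--\ref{lemma-Ritz-error-beta} for the second-order bound); the two bracketed terms are geometric errors governed by Lemma~\ref{lemma-geometric-error}. For the non-autonomous and semi-linear problems the same manipulation produces analogous defects, together with the nonlinear term $\big(f(u_h)-f(u),\cdot\big)$ and, in the time-varying case, the $\partial_t a_h-\partial_t a$ defect whose $O(h^2)$ bound is recorded in the proof of the time-varying Ritz lemma.

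The key point for the geometric terms is that the leading $Ch\,\|\cdot\|_{L^2(B_h^l)}\|\cdot\|_{L^2(B_h^l)}$ contribution in Lemma~\ref{lemma-geometric-error}, which is only $O(h)$ at first sight, is upgraded to $O(h^2)$ by the boundary-layer estimate of Lemma~\ref{lemma-layer-estimate}: each factor gains a half power, $\|v_h^l\|_{L^2(B_h^l)}\le Ch^{1/2}\|v_h^l\|_{H^1(\Om)}$. Combined with the $h$-uniform norm equivalences $\|v_h^l\|\sim\|v_h\|_h$ and $|v_h^l|\sim|v_h|_h$ noted after Lemma~\ref{lemma-geometric-error}, with the boundedness of the Ritz map, and with $\|v_h^l\|_{H^1(\Om)}\le C\|v_h^l\|$, every defect is $O(h^2)$ in the $V$-dual norm. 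Feeding this into the discrete energy estimate, and adding a Gronwall step in the non-autonomous and semi-linear cases, reproduces the first-order bound of Theorem~\ref{theorem-spatial-error-est-linear}; since the regularity Conditions~\ref{condition-regularity-0} and~\ref{condition-regularity-beta} hold on smooth domains, the second-order Ritz estimates are available and the second-order bounds of Theorems~\ref{theorem-spatial-error-est-linear-0}, \ref{theorem-spatial-error-est-linear-beta} and~\ref{theorem-spatial-error-est-semi-linear } follow as well. The semi-linear case additionally needs the $L_\infty$ bound on $u_h$ from the inverse estimate, argued as in the polygonal proof but applied to the lifts.

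The main obstacle is the bookkeeping of the geometric consistency defects so that none of them degrades the order: in particular, one must keep the near-boundary factors paired so that Lemma~\ref{lemma-layer-estimate} can always supply the missing half power of $h$, and, for the time-dependent estimates and the Ritz-time-derivative bound, one must verify that the constants in Lemma~\ref{lemma-geometric-error} and in the companion bound for $\partial_t a_h-\partial_t a$ are uniform in $t$. The latter is guaranteed by the smoothness of $\Om$ and the assumed uniform bounds on the coefficient functions, exactly as exploited in the preceding lemma.
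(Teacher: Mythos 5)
Your proposal is correct and follows essentially the same route as the paper, which simply asserts that the Section~3 proofs extend directly once the smooth-domain Ritz-map bounds and the geometric error estimates (Lemma~\ref{lemma-geometric-error} combined with the layer estimate of Lemma~\ref{lemma-layer-estimate}) are in hand; you have merely written out the error equation and the $O(h^2)$ bookkeeping of the geometric consistency defects that the paper leaves implicit.
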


\section{Time discretisation}

\subsection{Standard implicit integrators}
Since the parabolic problems with dynamic boundary conditions are cast in the same abstract setting of parabolic problems in which time integration by backward difference formulae (BDF) or algebraically stable implicit Runge--Kutta methods (such as the Radau IIA methods) has been analyzed before, results on the semi-discretisation in time by these methods can be applied directly; see, e.g., \cite{AkrivisLubich_quasilinBDF,Crouzeix1975,LubichOstermann_RK,Savare} for stability and error analyses for linear, semi-linear  and quasi-linear problems in an abstract setting that applies also to the problems considered here. Together with error bounds of the defects obtained on inserting the Ritz projection of the exact solution into the spatial semi-discretisation, such as derived in Sections 3 and 4, one then obtains error bounds for the full discretisation from the known results.

As an illustration of this procedure, we consider the $k$-step BDF time discretisation / finite element space discretisation of the linear non-autonomous problem \eqref{heat-dynbc-t}: with the bilinear forms
\eqref{a-heat-dynbc-t} and \eqref{b-heat-dynbc-t}, the finite element space $V_h$, a time stepsize $\tau>0$ and given starting values $u_h^0,\dots,u_h^{k-1}\in V_h$, we determine
$u_h^n\in V_h$ for $n\ge k$ with $t^n=n\tau\le T$ from the equation
\begin{equation}
\label{heat-dynbc-t-discrete}
        m(t^n;\partial_t^\tau u_h^n,v_h) + a(t^n;u_h^n,v_h) = m(t^n;f(t^n),v_h) \qquad \forall \,v_h\in V_h ,
\end{equation}
where $\partial_t^\tau u_h^n$ denotes the $k$-th order backward difference approximation to the time derivative,
$$
\partial_t^\tau u_h^n = \frac1\tau \sum_{j=0}^k \delta_j u_h^{n-j}
$$
with the coefficients $\delta_j$ given by the generating polynomial $\sum_{j=0}^k \delta_j\zeta^j = \sum_{\ell=1}^k \frac 1\ell (1-\zeta)^\ell$. In matrix-vector form, this is the implicit scheme
$$
 \Bigl(\frac{\delta_0}\tau\bfM(t^n)+ \bfA(t^n)\Bigr) \bfu^n = \bfb(t^n) - \bfM(t^n)\frac1\tau\sum_{j=1}^k  \delta_j \bfu^{n-j}.
$$
We then have the following fully discrete analogue  of Theorem~\ref{theorem-spatial-error-est-nonauto}.
\begin{theorem}
\label{theorem-spatial-error-est-nonauto-bdf}
    If the solution of the linear non-autonomous parabolic equation with dynamic boundary condition  \eqref{heat-dynbc-t} is sufficiently regular and if the starting values are sufficiently accurate, then the error of the  full discretisation \eqref{heat-dynbc-t-discrete} with linear finite elements and the $k$-step BDF method with $k\le 5$ satisfies the following error bound:
    \begin{equation*}
        \begin{alignedat}{1}
            & \|u_h^n - u(t^n)\|_{L_2(\Om)}^2 +  \int_\Ga \MU(.,t^n)\Big(\gamma u_h^n - \gamma u(t^n)\Big)^2\d \sigma + \tau\sum_{j=k}^n \big\|\nb\big(u_h^j - u(t^j)\big)\big\|_{L_2(\Om)}^2  \\
            & + \tau\sum_{j=k}^n \int_\Ga \BETA(.,t^j)\bigl(\nb_\Ga u_h^j - \nb_\Ga u(t^j)\bigr)^2 \d \sigma  + \tau\sum_{j=k}^n \int_\Ga \KAPPA(.,t^j)\bigl(\gamma u_h^j - \gamma u(t^j)\bigr)^2 \d \sigma  \leq C (h+\tau^{k})^2,
        \end{alignedat}
    \end{equation*}
for $n\ge k$ and $t^n=n\tau\leq T$, where the constant $C$ is independent of $h$, $\tau$ and $t$, but depends on $T$.
\end{theorem}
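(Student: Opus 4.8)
The plan is to follow the standard pattern for fully discrete error analysis within the abstract non-autonomous framework: split the error through the time-dependent Ritz projection, note that the Ritz part is already controlled by the spatial estimates, and reduce the statement to a purely time-discrete stability argument for the remaining part. Writing $w(t):=R_h(t)u(t)\in V_h$, I would decompose
$$
u_h^n - u(t^n) = \big(u_h^n - w(t^n)\big) + \big(w(t^n) - u(t^n)\big) =: e_h^n + \rho^n .
$$
The Ritz error $\rho^n$ is bounded pointwise in the energy norm by Lemma~\ref{lemma-Ritz-t}, so both $|\rho^n|_{t^n}$ and $\|\rho^n\|_{t^n}$ are of size $O(h)$; summing over $j$ takes care of the $\tau\sum_{j}$ terms on the left-hand side. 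Everything then reduces to bounding $e_h^n\in V_h$, whose contributions are exactly $|e_h^n|_{t^n}^2$ and $\tau\sum_{j=k}^n\|e_h^j\|_{t^j}^2$ in the notation of the energy estimate \eqref{en-est-t}, matching the left-hand side of the theorem term by term.

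First I would derive the error equation. Inserting $w$ into the scheme \eqref{heat-dynbc-t-discrete} and defining the defect $d^n$ by $m(t^n;d^n,v_h):=m(t^n;\partial_t^\tau w(t^n),v_h)+a(t^n;w(t^n),v_h)-m(t^n;f(t^n),v_h)$, the Galerkin orthogonality $a(t^n;w(t^n),v_h)=a(t^n;u(t^n),v_h)$ of the Ritz projection \eqref{ritz-t} together with the exact weak equation collapses the defect to $d^n=\partial_t^\tau w(t^n)-\dot u(t^n)$ as a functional on $V_h$. Subtracting yields
$$
m(t^n;\partial_t^\tau e_h^n,v_h) + a(t^n;e_h^n,v_h) = -\,m(t^n; d^n, v_h) \qquad \forall\, v_h \in V_h .
$$
I would then split $d^n = \big(\partial_t^\tau w(t^n) - \diff w(t^n)\big) + \big(\diff w(t^n) - \dot u(t^n)\big)$. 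The second bracket is precisely the quantity estimated in Lemma~\ref{lemma-Ritz-dot}, so its dual norm $\|\cdot\|_{*,t^n}$ is $O(h)$. The first bracket is the usual BDF consistency error for the smooth function $w$, which by Taylor expansion of the generating coefficients $\delta_j$ is $O(\tau^k)$, provided the time derivatives of $w=R_h(t)u(t)$ up to order $k+1$ are bounded; these are controlled by differentiating the defining relation of $R_h(t)$ and using the stability of the Ritz projection, at the cost of assuming $u$ sufficiently regular in time with values in $H^2(\Om)$ (and $\gamma u$ in $H^2(\Ga)$). Thus $\|d^n\|_{*,t^n}\le C(h+\tau^k)$.

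The core of the argument, and the step I expect to be the main obstacle, is the discrete stability estimate for the error equation — the fully discrete analogue of \eqref{en-est-t}. Since the problem lies in the same abstract non-autonomous setting treated in \cite{AkrivisLubich_quasilinBDF,LubichOstermann_RK,Crouzeix1975,Savare}, I would invoke the $G$-stability energy estimate for the $k$-step BDF method: testing the error equation with the Nevanlinna--Odeh multiplier combination $\sum_i \eta_i\, e_h^{n-i}$ and summing gives
$$
|e_h^n|_{t^n}^2 + \tau\sum_{j=k}^n \|e_h^j\|_{t^j}^2 \le C\Big(\sum_{i=0}^{k-1}|e_h^i|_{t^i}^2 + \tau\sum_{j=k}^n \|d^j\|_{*,t^j}^2\Big),
$$
where the time-dependence of $m$ and $a$ is absorbed through the bounds \eqref{m-dot-bound} and \eqref{a-dot-bound} into a discrete Gronwall inequality producing the $T$-dependent constant. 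The restriction $k\le 5$ enters exactly here: it is the range in which the required energy-stabilising multipliers exist, whereas for $k\ge 6$ no such multiplier exists and the abstract stability result fails. I would discharge this step by citing the abstract convergence theorems rather than reproducing the multiplier calculation.

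Finally I would combine the pieces. The defect bound gives $\tau\sum_{j=k}^n\|d^j\|_{*,t^j}^2 \le C\,T\,(h+\tau^k)^2$, and the assumption of sufficiently accurate starting values ensures $|e_h^i|_{t^i}\le C(h+\tau^k)$ for $i<k$. The stability estimate then yields $|e_h^n|_{t^n}^2 + \tau\sum_{j=k}^n\|e_h^j\|_{t^j}^2 \le C(h+\tau^k)^2$, and a triangle inequality with the Ritz bounds for $\rho^n$ from Lemma~\ref{lemma-Ritz-t} produces the stated $O\big((h+\tau^k)^2\big)$ estimate in the time-dependent norms.
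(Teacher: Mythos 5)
Your proposal is correct and follows the same overall architecture as the paper's proof: the error is split through the time-dependent Ritz projection, the part $e_h^n=u_h^n-R_h(t^n)u(t^n)$ satisfies an error equation driven by a defect of size $O(h+\tau^k)$, and the conclusion follows from the Nevanlinna--Odeh multiplier stability estimate for BDF methods with $k\le 5$ (the paper tests with $v_h=e_h^n-\eta\, e_h^{n-1}$ for specific $\eta$ and cites the same circle of references you invoke). The one genuine difference is how the defect is split. You write $d^n=\bigl(\partial_t^\tau w(t^n)-\dot w(t^n)\bigr)+\bigl(\dot w(t^n)-\dot u(t^n)\bigr)$ with $w(t)=R_h(t)u(t)$, so your $O(\tau^k)$ term is the BDF consistency error of the \emph{projected} solution; this forces you to control, uniformly in $h$, the time derivatives of $R_h(t)u(t)$ up to order $k+1$, which the paper's lemmas do not supply (Lemma~\ref{lemma-Ritz-dot} covers only the first derivative) and which you only sketch. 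The paper instead splits $d_h^n=\bigl(\dot u(t^n)-\partial_t^\tau u(t^n)\bigr)+\partial_t^\tau(u-R_hu)(t^n)$: the consistency error is taken on the exact solution $u$, and the second term is a linear combination of difference quotients, each equal to $\int_0^1 \tfrac{d}{dt}(u-R_hu)(t^{n-j-1}+\theta\tau)\,\d\theta$, so only the first time derivative of the Ritz error --- exactly Lemma~\ref{lemma-Ritz-dot} --- is needed. Your route works but requires extending Lemma~\ref{lemma-Ritz-dot} to higher derivatives; the paper's splitting is arranged precisely to avoid that extra step.
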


\begin{proof}
 The error equation for $e_h^n=u_h^n-R_h(t^n)u(t^n)$ is
\begin{equation*}
    m(t^n;\partial_t^\tau e_h^n,v_h) + a(t^n;e_h^n,v_h) = m(t^n;d_h^n,v_h)
  \qquad \forall v_h\in V_h,
\end{equation*}
where
$$
d_h^n = \bigl(\dot u(t^n)-(\partial_t^\tau u)(t^n)\bigr) + \partial_t^\tau\bigl(u-R_hu)(t^n) .
$$
Here the first term is bounded, for example in the norm $\|v\|_t^2 = a(t;v,v)$ or $|v|_t^2 = m(t;v,v)$, by $O(\tau^k)$ for a temporally smooth solution $u$. The second term  is a linear combination of $k$ shifted difference quotients
$$
\frac1\tau\Bigl((u-R_hu)(t^{n-j})-(u-R_hu)(t^{n-j-1})\Bigr) = \int_0^1 \frac d{dt} (u-R_hu)(t^{n-j-1}+\theta\tau)\, \d\theta.
$$
Using Lemma~\ref{lemma-Ritz-dot}, we thus obtain
$$
\|d_h^n\|_{t^n} \le C(\tau^k+h).
$$
The following stability estimate is proved in \cite{LubichMansourVenkataraman_bdsurf} by testing with $v_h=e_h^n-\eta e_h^{n-1}$ with $\eta=0$, 0, 0.0836, 0.2878, 0.8160 for $k=1,2,\dotsc,5$, respectively,  using results of \cite{Dahlquist,NevanlinnaOdeh} and the bounds \eqref{m-dot-bound} and \eqref{a-dot-bound}: for $t^n\le T$,
$$
|e_h^n|_{t^n}^2 + \tau \sum_{j=k}^{n} \|e_h^j\|_{t^j}^2
\le C\,\dt\sum_{j=k}^{n} \|d_h^j\|_{*,t^j}^2 + C \max_{0\le i\le k-1} |e_h^i|_{t^i}^2. 
$$
Using Lemma~\ref{lemma-Ritz-t} and recalling the definition of the bilinear forms $a$ and $m$ of \eqref{a-heat-dynbc}--\eqref{b-heat-dynbc} then yields the result.
\end{proof}

We note that the order in $h$ increases from 1 to $m$  if finite elements of degree $m$ are used.

The result for the semi-linear  equation, Theorem~\ref{theorem-spatial-error-est-semi-linear }, is extended to the BDF full discretisation in the same way under the mild stepsize restriction $\tau^k \le C h^{d/2}$, which allows us to show the $L^\infty$ bound of the fully discrete solution  in the same way as in the semi-discrete case.

%
%
%

\subsection{Exponential integrators}

Exponential integrators have become attractive for use with parabolic problems in recent years since they allow for large time steps while using just matrix-vector multiplications; see the review by \cite{HoO}.
While it is not obvious from the outset how to use exponential integrators for parabolic problems with dynamic boundary conditions such as \eqref{heat-dynbc}, the above weak formulation and finite element discretisation with mass lumping yields the system
$$
\M \dot\bfu(t) + \bfA \bfu(t) = \bfb(t)
$$
with a diagonal positive definite mass matrix $\M$. With $\widehat\bfA = \M^{-1/2} \bfA \M^{-1/2}$ and $\widehat\bfb(t)=\M^{-1/2}\bfb(t)$ and $\bfy(t)=\M^{1/2}\bfu(t)$, the transformed system becomes
$$
\dot\bfy(t) = -\widehat\bfA \bfy(t)+ \bfb(t),
$$
to which an exponential integrator is readily applied. Here we consider for simplicity of presentation just the exponential Euler method
$$
\bfy^{n+1} = \bfy^n + \tau \varphi(-\tau\widehat \bfA) \bigl(-\widehat \bfA \bfy^n + \widehat\bfb(t^n) \bigr),
$$
with the entire function $\varphi(z)=(e^z-1)/z$. The matrix-vector product $\varphi(-\tau\widehat \bfA)\bfv$ can be approximated efficiently by Krylov subspace methods; see \cite{HoL,Sa}. Since the largest eigenvalues of the symmetric positive semi-definite matrix $\widehat\bfA$ are of size $O(h^{-2})$ in the case of a quasi-uniform triangulation with mesh size $h$, the error bounds of \cite{HoL} show that using $m$ Krylov substeps, each of which requires one multiplication of $\widehat\bfA$ with a vector, with a step size $\tau=O(m^2h^2/|\log\varepsilon|)$ the matrix-vector product $\varphi(-\tau\widehat \bfA)\bfv$ is approximated with an $O(\varepsilon)$ error. For large $m\sim h^{-1}$ the effective step size $\tau_{\rm eff} =\tau/m$ can thus be of size $O(h)$, as opposed to only $O(h^2)$ for the explicit Euler method or standard explicit Runge--Kutta methods. Runge--Kutta--Chebyshev methods, such as those in \cite{Abdulle2002}, achieve the same increase in the effective  step size.

\subsection{Bulk-surface splitting integrators}
Especially in situations with different time scales in the domain and on the boundary, such as fast reaction-diffusion on the surface and slow diffusion in the bulk, it appears attractive to use splitting methods that treat the problem in the bulk and on the surface separately. There are basically two ways in which such a splitting can be done:
\begin{itemize}
 \item {\it Bulk-surface force splitting}\/: Write the stiffness matrix as $\bfA=\bfA_\Omega + \bfA_\Gamma$, where
 $\bfA_\Omega$ and $\bfA_\Gamma$ are the stiffness matrices corresponding to the bilinear forms $a_\Omega(\cdot,\cdot)$  and $a_\Gamma(\cdot,\cdot)$ defined by the bulk and surface integrals, respectively. We use a corresponding decomposition for the load vector $\bfb(t)=\bfb_\Omega(t)+\bfb_\Gamma(t)$. Then a time step with the Strang splitting for this decomposition reads as follows:
 \begin{enumerate}
  \item Over half a time step solve $\M\dot\bfu(t) = -\bfA_\Gamma \bfu(t)+ \bfb_\Gamma(t_0)$ for $0\le t \le t_{1/2} = t_0+\tfrac12\tau$ with initial value $\bfu^0$, to obtain the final value $\bfu^{1/2,-}$.
  \item Over a full time step solve $\M\dot\bfu(t) = -\bfA_\Omega \bfu(t)+ \bfb_\Omega(t_{1/2})$ for $0\le t \le t_{1} = t_0+\tau$ with initial value $\bfu^{1/2,-}$, to obtain the final value $\bfu^{1/2,+}$.
  \item Over half a time step solve $\M\dot\bfu(t) = -\bfA_\Gamma \bfu(t)+ \bfb_\Gamma(t_1)$ for $t_{1/2}\le t \le t_{1}$ with initial value $\bfu^{1/2,+}$, to obtain the final value $\bfu^1$.
 \end{enumerate}
With a diagonal mass matrix $\M$, the first and third substep only change nodal values that correspond to surface nodes.
In the second substep, $\bfb_\Omega(t_{1/2})$ can be replaced by $\tfrac12 (\bfb_\Omega(t_{0})+\bfb_\Omega(t_{1}))$.

 \item {\it Bulk-surface component splitting}\/: We write the nodal vector as
 $$
 \bfu=\begin{pmatrix} \bfu_0 \\ \bfu_1 \end{pmatrix},
 $$
 where $\bfu_0$ and $\bfu_1$ contain the approximate solution values at the inner nodes and boundary nodes, respectively. We write similarly
 $$
 \M = \begin{pmatrix}
         \M_0 & \bfzero \\
         \bfzero & \M_1
        \end{pmatrix},
        \quad\
 \bfA = \begin{pmatrix}
         \bfA_{00} & \bfA_{01}
         \\
         \bfA_{10} & \bfA_{11}
        \end{pmatrix},
        \quad\
 \bfb = \begin{pmatrix} \bfb_0 \\ \bfb_1 \end{pmatrix}.
 $$
A time step with the Strang splitting for this decomposition reads as follows:
\begin{enumerate}
  \item Over half a time step solve $\M_1\dot\bfu_1(t) = -\bfA_{11} \bfu_1(t)-\bfA_{10} \bfu_0^0+ \bfb_1(t_0)$ for $0\le t \le t_{1/2}$ with initial value $\bfu_1^0$, to obtain the final value $\bfu_1^{1/2}$.
  \item Over a full time step solve $\M_0\dot\bfu_0(t) = -\bfA_{00} \bfu_0(t)-\bfA_{01} \bfu_0^{1/2}+ \bfb_0(t_{1/2})$ for $0\le t \le t_{1}$ with initial value $\bfu_0^0$, to obtain the final value $\bfu_0^1$.
  \item Over half a time step solve $\M_1\dot\bfu_1(t) = -\bfA_{11} \bfu_1(t)-\bfA_{10} \bfu_0^1+ \bfb_1(t_1)$ for $t_{1/2}\le t \le t_{1}$ with initial value $\bfu_1^{1/2}$, to obtain the final value $\bfu_1^1$.
\end{enumerate}
In the second substep, $\bfb_0(t_{1/2})$ can be replaced by $\tfrac12 (\bfb_0(t_{0})+\bfb_0(t_{1}))$.
\end{itemize}

For both types of splittings, the local error (that is, the error after one step starting from the exact solution) is of order $O(\tau^3)$ under sufficient regularity assumptions on the solution; cf., e.g., \cite{DesS02,HanO09,JahL00} for different techniques to bound the local error of the Strang splitting. This yields an $O(\tau^2)$ global error bound uniformly over bounded time intervals {\it in a norm in which the method is stable}.

The force splitting is $L_2(\Omega)\oplus L_2(\Gamma)$-stable: with the symmetric positive definite matrices $\widehat\bfA_\Omega = \M^{-1/2} \bfA_\Omega \M^{-1/2}$ and $\widehat\bfA_\Gamma = \M^{-1/2} \bfA_\Gamma \M^{-1/2}$ we have (taking here $\bfb(t)=\bfzero$ for simplicity),
$$
\M^{1/2} \bfu^1 = e^{-\frac\tau2 \widehat \bfA_\Gamma} \, e^{-\tau \widehat \bfA_\Omega} \,
e^{-\frac\tau2 \widehat \bfA_\Gamma}\,\M^{1/2}\bfu^0
$$
and hence
$$
(\bfu^1)^T \M \bfu^1 \le (\bfu^0)^T \M \bfu^0,
$$
where we note the $h$-uniform equivalence of the norms 
(see Section~\ref{subsubsec:mass-lumping})
$$(\bfu^T\M\bfu)^{1/2} = |u_h|_{\rm LM} \sim |u_h| =
\bigl( \|u_h\|_{L_2(\Omega)}^2 + \mu \|u_h\|_{L_2(\Gamma)}^2 \bigr)^{1/2}.
$$

Stability is not evident for the component splitting, since here we have (again for $\bfb(t)=\bfzero$) a composition of exponentials of non-symmetric matrices:
$$
\M^{1/2} \bfu^1 = \exp\left({-\frac\tau2 \begin{pmatrix} \bfzero & \bfzero \\
                                     \widehat \bfA_{10} & \widehat \bfA_{11}
                                    \end{pmatrix} }\right)\,
  \exp\left({-\tau \begin{pmatrix}
              \widehat \bfA_{00} & \widehat \bfA_{01}
              \\
              \bfzero & \bfzero
                                    \end{pmatrix}} \right)\,
  \exp\left({-\frac\tau2 \begin{pmatrix} \bfzero & \bfzero \\
                                     \widehat \bfA_{10} & \widehat \bfA_{11}
                                    \end{pmatrix}} \right)\,
                                    \M^{1/2}\bfu^0.
 $$
We have, however, the following stability bound in a stepsize-dependent norm related to the energy norm. Here, $\bfy^1=\M^{1/2}\bfu^1$ denotes the mass-scaled numerical solution after one step of the above component splitting method with $\bfb(t)=0$, and $\|\cdot\|_2$ denotes the Euclidean norm or its induced matrix norm.

\begin{lemma} \label{lem:stab-split}
For the matrix
$$
\bfL = \begin{pmatrix}
(\bfI-e^{-\tau \widehat\bfA_{00}})^{-1/2}  & \bfzero \\
\bfL_{10} & (\bfI-e^{-\tau \widehat\bfA_{11}})^{-1/2}
\end{pmatrix}
\begin{pmatrix}
\widehat\bfA_{00} ^{1/2} & \bfzero \\
\bfzero & \widehat\bfA_{11} ^{1/2}
\end{pmatrix}
$$
with the off-diagonal block $\bfL_{10}= (\bfI-e^{-\tau \widehat\bfA_{11}/2})^{1/2} (\bfI+e^{-\tau \widehat\bfA_{11}/2})^{-1/2} (\widehat\bfA_{11}^{-1/2} \widehat\bfA_{10} \widehat\bfA_{00} ^{-1/2})$, which has $\|\bfL_{10}\|_2\le 1$,
we have 
$$
\| \bfL  \bfy^1 \|_2 \le \| \bfL  \bfy^0 \|_2.
$$
\end{lemma}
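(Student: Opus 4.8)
The plan is to recast the one-step inequality as a Euclidean operator-norm bound and then to exploit the algebraic structure that the weight $\bfL$ is tailored to expose. Writing $\bfy^1=\mathbf{S}\bfy^0$ for the one-step propagator $\mathbf{S}=e^{-\frac\tau2\mathbf B}\,e^{-\tau\mathbf C}\,e^{-\frac\tau2\mathbf B}$ read off from the three exponentials displayed in the statement (with $\mathbf B,\mathbf C$ the generators with zero bulk- resp. surface-block), the claim $\|\bfL\bfy^1\|_2\le\|\bfL\bfy^0\|_2$ is equivalent to $\|\bfL\mathbf S\bfL^{-1}\|_2\le1$. I would therefore aim to compute the conjugated propagator $\mathbf G:=\bfL\mathbf S\bfL^{-1}$ as explicitly as possible and show that it is symmetric with $-\bfI\preceq\mathbf G\preceq\bfI$.

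The generators $\mathbf B$ and $\mathbf C$ are block-nilpotent apart from their diagonal surface/bulk block, so each exponential is available in closed form. Since $\bfL=\tilde{\mathbf D}\,\mathbf P$ factors through the block-diagonal $\mathbf P=\mathrm{diag}(\widehat\bfA_{00}^{1/2},\widehat\bfA_{11}^{1/2})$ and the lower-triangular first factor $\tilde{\mathbf D}$, I would first conjugate by $\mathbf P$. Writing $F_0=e^{-\tau\widehat\bfA_{00}}$ and $G_1=e^{-\frac\tau2\widehat\bfA_{11}}$ (symmetric, $\prec\bfI$) and $R=\widehat\bfA_{11}^{-1/2}\widehat\bfA_{10}\widehat\bfA_{00}^{-1/2}$, the surface and bulk propagators become shear-diagonalizable, with diagonal decay blocks $G_1$ and $F_0$ coupled only through $R$. (Here $\widehat\bfA_{00},\widehat\bfA_{11}\succ\bfzero$, so $R$ is well defined, because $\KAPPA>0$ makes $\bfA$ positive definite and the lumped $\M$ is diagonal and positive.)

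The heart of the matter is the explicit form of $\mathbf G$. After the remaining conjugation by $\tilde{\mathbf D}$ I expect the cross-terms to telescope so that, with $\widehat R:=(\bfI-G_1^2)^{1/2}R(\bfI-F_0)^{1/2}$,
\[
\mathbf G=\begin{pmatrix} F_0+\widehat R^{T}\widehat R & -\widehat R^{T}G_1\\[2pt] -G_1\widehat R & G_1^2\end{pmatrix},
\]
which is \emph{symmetric}. This symmetry is precisely the payoff of the specific weights in $\bfL$ — the off-diagonal $\bfL_{10}$ and the factors $(\bfI-e^{-\tau\widehat\bfA})^{-1/2}$ — and obtaining this clean form is the step I expect to be the main obstacle, since it requires the block exponentials together with several commuting-function simplifications to combine exactly.

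Once $\mathbf G$ is in this form the positivity is routine. I would write $\mathbf G=\mathrm{diag}(F_0,\bfzero)+\mathbf W\mathbf W^{T}$ with $\mathbf W=\bigl(\widehat R^{T},-G_1\bigr)^{T}$, which gives $\mathbf G\succeq\bfzero\succeq-\bfI$. For the upper bound $\mathbf G\preceq\bfI$, the inequality $\mathbf W\mathbf W^{T}\preceq\mathrm{diag}(\bfI-F_0,\bfI)$ is handled by a Schur complement with respect to the positive definite block $\bfI-G_1^2$; using $\widehat R^{T}(\bfI-G_1^2)^{-1}\widehat R=(\bfI-F_0)^{1/2}R^{T}R(\bfI-F_0)^{1/2}$ it collapses to
\[
(\bfI-F_0)^{1/2}\bigl(\bfI-R^{T}R\bigr)(\bfI-F_0)^{1/2}\succeq\bfzero,
\]
i.e. to $\|R\|_2\le1$. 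Finally $\|R\|_2\le1$ follows from the positive definiteness of $\widehat\bfA$: its Schur complement $\widehat\bfA_{11}-\widehat\bfA_{10}\widehat\bfA_{00}^{-1}\widehat\bfA_{01}\succeq\bfzero$ yields $R^{T}R\preceq\bfI$. The same bound, together with $(\bfI-e^{-\tau\widehat\bfA_{11}/2})^{1/2}(\bfI+e^{-\tau\widehat\bfA_{11}/2})^{-1/2}\prec\bfI$, also gives the auxiliary estimate $\|\bfL_{10}\|_2\le1$ recorded in the statement.
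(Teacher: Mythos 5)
Your proposal is correct and lands on exactly the object the paper works with: the conjectured symmetric matrix $\mathbf{G}=\bfL\bfS_{\rm Strang}\bfL^{-1}$ is, block for block, the paper's $\widetilde\bfS$ (identify $\bfX=R(\bfI-F_0)^{1/2}$ and $\widehat R=(\bfI-G_1^2)^{1/2}\bfX$), so the step you flag as the main obstacle does come out as you expect. The paper reaches that form by a slightly slicker route you might adopt: it first symmetrizes the \emph{Lie--Trotter} propagator $\bfS_{\rm Lie}=\bfE_0(\tau)\bfE_1(\tau)$ with a block-diagonal $\bfT$, and then uses the similarity $\bfS_{\rm Strang}=\bfE_1(\tau/2)\,\bfS_{\rm Lie}\,\bfE_1(\tau/2)^{-1}$, so that $\bfL=(\bfE_1(\tau/2)\bfT)^{-1}$ and the "telescoping" you anticipate is automatic rather than something to be checked by brute force. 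Where you genuinely differ is the spectral bound: the paper estimates the numerical range $\bfv^T\widetilde\bfS\bfv$ directly, absorbing the cross term by Young's inequality and invoking $\|R\|_2\le1$ from the Schur complement of $\widehat\bfA$; you instead write $\mathbf{G}=\mathrm{diag}(F_0,\bfzero)+\mathbf{W}\mathbf{W}^{T}$ and reduce $\mathbf{G}\preceq\bfI$ to $\|R\|_2\le 1$ via a Schur complement in the $(2,2)$ block, using $\bfI+G_1(\bfI-G_1^2)^{-1}G_1=(\bfI-G_1^2)^{-1}$. Your version is arguably cleaner and buys a sharper lower bound ($\mathbf{G}\succeq\bfzero$, hence the Strang propagator is similar to a positive semidefinite contraction, not merely to a matrix of norm $\le 1$); the paper's quadratic-form estimate avoids inverting $\bfI-G_1^2$ and is closer to a hand computation. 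One cosmetic point: to get $R^{T}R\preceq\bfI$ you should cite the Schur complement $\widehat\bfA_{00}-\widehat\bfA_{01}\widehat\bfA_{11}^{-1}\widehat\bfA_{10}\succeq\bfzero$ (the one you name gives $RR^{T}\preceq\bfI$, which is of course equivalent since $\|R\|_2=\|R^{T}\|_2$).
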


\begin{proof}
 For ease of notation we omit the hats on $\widehat\bfA_{00}$ etc.~in the following. We denote the exponential matrices
 \begin{eqnarray*}
  \bfE_0(\tau) &=& \exp\left({-\tau \begin{pmatrix}
               \bfA_{00} &  \bfA_{01}
              \\
              \bfzero & \bfzero
                                    \end{pmatrix}} \right) =
         \begin{pmatrix}
          e^{-\tau \bfA_{00}} & - (\bfI-e^{-\tau \bfA_{00}}) \bfA_{00}^{-1} \bfA_{01} \\
          \bfzero & \bfI
         \end{pmatrix}
         \\
  \bfE_1(\tau) &=&  \exp\left({-\tau \begin{pmatrix} \bfzero & \bfzero \\
                                      \bfA_{10} & \bfA_{11}
                                    \end{pmatrix}} \right)
                = \begin{pmatrix}
                   \bfI & \bfzero \\
                   -(\bfI-e^{-\tau \bfA_{11}})\bfA_{11}^{-1}\bfA_{10} & e^{-\tau \bfA_{11}}
                  \end{pmatrix}
  \end{eqnarray*}
so that the propagation matrix of the Lie-Trotter splitting is
$$
\bfS_{\rm Lie} = \bfE_0(\tau)\, \bfE_1(\tau)
$$
and that of the Strang splitting is
$$
\bfS_{\rm Strang} = \bfE_1(\tau/2)\,\bfE_0(\tau)\, \bfE_1(\tau/2) = \bfE_1(\tau/2)\, \bfS_{\rm Lie} \, \bfE_1(\tau/2)^{-1}.
$$
It turns out that the block diagonal matrix
$$
\bfT = \begin{pmatrix}
        (\bfI-e^{-\tau \bfA_{00}})^{1/2} \bfA_{00}^{-1/2} & \bfzero \\
        \bfzero & e^{\tau\bfA_{11}/2} (\bfI-e^{-\tau \bfA_{11}})^{1/2}\bfA_{11}^{-1/2}
       \end{pmatrix}
$$
symmetrizes the Lie matrix:
$$
\widetilde \bfS := \bfT^{-1} \bfS_{\rm Lie} \bfT
$$
equals the symmetric matrix, with the abbreviation $\bfX^T =  (\bfI-e^{-\tau \bfA_{00}})^{1/2} \bfA_{00}^{-1/2} \bfA_{01}
\bfA_{11}^{-1/2}$,
$$
\widetilde \bfS = \begin{pmatrix}
                 e^{-\tau \bfA_{00}} + \bfX^T (\bfI-e^{-\tau \bfA_{11}}) \bfX &
                 -\bfX^T (\bfI-e^{-\tau \bfA_{11}})^{1/2} e^{-\tau\bfA_{11}/2} \\
                 -e^{-\tau\bfA_{11}/2} (\bfI-e^{-\tau \bfA_{11}})^{1/2} \bfX &
                 e^{-\tau \bfA_{11}}
                \end{pmatrix}.
$$
We next show that $\| \widetilde\bfS \|_2 \le 1$. Since the matrix is symmetric, this is equivalent to showing that the numerical range $\{\bfv^T\widetilde\bfS\bfv\,:\, \|\bfv\|_2\le 1\}$ is contained in the interval $[-1,1]$. With $\bfv^T=(\bfv_0^T,\bfv_1^T)$ the calculation yields
\begin{eqnarray*}
\bfv^T\widetilde\bfS\bfv &=& \bfv_0^T e^{-\tau \bfA_{00}} \bfv_0 + \bfv_0^T \bfX^T (\bfI-e^{-\tau \bfA_{11}}) \bfX \bfv_0
\\
&& -2 \bfv_0^T \bfX^T e^{-\tau\bfA_{11}/2} (\bfI-e^{-\tau \bfA_{11}})^{1/2}\bfv_1 + \bfv_1^T e^{-\tau \bfA_{11}} \bfv_1,
\end{eqnarray*}
where the mixed term is estimated by
$$
-2 \bfv_0^T \bfX^T e^{-\tau\bfA_{11}/2}\cdot (\bfI-e^{-\tau \bfA_{11}})^{1/2}\bfv_1 \le
\bfv_0^T \bfX^T e^{-\tau\bfA_{11}}\bfX \bfv_0 + \bfv_1^T (\bfI-e^{-\tau \bfA_{11}}) \bfv_1,
$$
so that some terms cancel and we obtain
$$
\bfv^T\widetilde\bfS\bfv \le  \bfv_0^T e^{-\tau \bfA_{00}} \bfv_0 + \bfv_0^T \bfX^T\bfX \bfv_0 + \bfv_1^T\bfv_1.
$$
Since the Schur complement $\bfA_{00}- \bfA_{01}\bfA_{11}^{-1}\bfA_{01}^T$ is symmetric positive definite,
we infer that also the transformed matrix
$\bfI - \bfA_{00}^{-1/2}\bfA_{01}\bfA_{11}^{-1/2}\cdot \bfA_{11}^{-1/2}\bfA_{01}^T\bfA_{00}^{-1/2}$ is positive definite and hence $\|\bfA_{00}^{-1/2}\bfA_{01}\bfA_{11}^{-1/2}\|_2\le 1$, so that on recalling the definition of $\bfX$
we can bound
$$
\bfv_0^T \bfX^T\bfX \bfv_0 \le \bfv_0^T (\bfI-e^{-\tau \bfA_{00}}) \bfv_0.
$$
This gives us finally
$$
\bfv^T\widetilde\bfS\bfv \le \bfv_0^T \bfv_0 + \bfv_1^T\bfv_1 = \|\bfv\|_2^2
$$
and in the same way we also obtain
$$
\bfv^T\widetilde\bfS\bfv \ge - \|\bfv\|_2^2.
$$
Hence we have shown that $\| \widetilde\bfS \|_2 \le 1$.
Since
$$
\bfS_{\rm Strang}   = \bfE_1(\tau/2) \,\bfS_{\rm Lie}  \,\bfE_1(\tau/2)^{-1} =
\bfE_1(\tau/2)\,\bfT  \widetilde\bfS  \bfT^{-1} \,\bfE_1(\tau/2)^{-1},
$$
we obtain with $\bfL=   (\bfE_1(\tau/2)\bfT)^{-1}$, which is the matrix $\bfL$ given in the statement of the lemma,
that for $\bfy^1=\bfS_{\rm Strang}\bfy^0$ we have the bound
$$
\| \bfL \bfy^1 \|_2 = \| \widetilde \bfS \bfL \bfy^0 \|_2 \le \| \bfL \bfy^0 \|_2,
$$
which concludes the proof.
\end{proof}

\section*{Acknowledgement}
We thank Andr\'as B\'atkai for stimulating discussions in the beginnings of this work. The research stay of B.K.~at the University of T\"ubingen has been funded by DAAD.

\bibliographystyle{IMANUM-BIB}
\bibliography{dynamic}
\end{document}